\newcommand{\wh}{\widehat}
\newcommand{\wt}{\widetilde}
\newcommand{\Q}{\mathbb{Q}}
\newcommand{\R}{\mathbb{R}}
\newcommand{\Z}{\mathbb{Z}}
\DeclareMathOperator{\Hom}{Hom}
\DeclareMathOperator{\id}{id}
\DeclareMathOperator{\rank}{rank}
\DeclareMathOperator{\Res}{Res}
\DeclareMathOperator{\IM}{im}
\DeclareMathOperator{\Tor}{Tor}
\DeclareMathOperator{\Ker}{ker}
\DeclareMathOperator{\coker}{coker}
\DeclareMathOperator{\SO}{SO}
\DeclareMathOperator{\Tors}{Tors}
\newtheorem{thm}{Theorem}[section]
\newtheorem*{thm*}{Theorem}
\newtheorem{prop}[thm]{Proposition}
\newtheorem*{prop*}{Proposition}
\newtheorem{lemma}[thm]{Lemma}
\newtheorem{corollary}[thm]{Corollary}
\newtheorem*{corollary*}{Corollary}
\newtheorem*{question*}{Question}
\theoremstyle{definition}
\newtheorem{definition}[thm]{Definition}
\newtheorem{convention}[thm]{Convention}
\theoremstyle{remark}
\newtheorem{remark}[thm]{Remark}
\newtheorem*{remark*}{Remark}
\numberwithin{equation}{section}
\newcounter{commentcounter}
\renewcommand{\odot}{\,\square\,}
\title{Homotopy classification of 4-manifolds whose fundamental group is dihedral}
\author{Daniel Kasprowski}
\address{Rheinische Friedrich-Wilhelms-Universit\"{a}t Bonn, Mathematisches Institut, \newline \indent
Endenicher Allee 60, 53115 Bonn, Germany}
\email{\href{mailto:kasprowski@uni-bonn.de}{kasprowski@uni-bonn.de}}
\author{John Nicholson}
\address{Department of Mathematics, UCL, Gower Street, London, WC1E 6BT, United Kingdom}
\email{\href{mailto:j.k.nicholson@ucl.ac.uk}{j.k.nicholson@ucl.ac.uk}}
\author{Benjamin Ruppik}
\address{Max-Planck-Institut f\"{u}r Mathematik, Vivatsgasse 7, 53111 Bonn, Germany}
\email{\href{mailto:bruppik@mpim-bonn.mpg.de}{bruppik@mpim-bonn.mpg.de}}
\subjclass[2020]{
	Primary 57K40; 
	Secondary 
	16E05, 
	57N65, 
	57P10. 
	\hfill \today
	}
\keywords{Whitehead's Gamma group, homotopy classification of 4-manifolds, Poincar\'{e} complexes}
\begin{document}

\begin{abstract}
We show that the homotopy type of a finite oriented Poincar\'{e} 4-complex is determined by its quadratic 2-type provided its fundamental group is finite and has a dihedral Sylow 2-subgroup.
By combining with results of Hambleton--Kreck and Bauer, this applies in the case of smooth oriented 4-manifolds whose fundamental group is a finite subgroup of $\SO(3)$. An important class of examples are elliptic surfaces with finite fundamental group.
	\vspace{-7mm}
\end{abstract}

\maketitle

\section*{Introduction}

Recall that a finite oriented Poincar\'{e} 4-complex is a finite
CW-complex with a fundamental class $[X] \in H_4(X;\Z)$ such that
\[- \cap [X] \colon C^{4-*}(X;\Z[\pi_1(X)]) \to C_*(X;\Z[\pi_1(X)])\]
is a chain homotopy equivalence \cite{Wall67}.
Every closed topological 4-manifold has the structure of a finite Poincar\'{e} 4-complex,
but there are finite Poincar\'{e} 4-complexes which are not homotopy equivalent
to any closed topological 4-manifold \cite{HM78}.

In 1988, Hambleton and Kreck \cite[Theorem 1.1]{HK} proved that an oriented 
Poincar\'e 4-complex $X$ with finite fundamental group $\pi_1(X)$ is determined up to homotopy equivalence by three invariants, including
the isometry class of its quadratic 2-type, i.e.\ the quadruple
\[
[\pi_1(X),\pi_2(X),k_X,\lambda_X]
\] 
where $\pi_2(X)$ is considered as a $\Z[\pi_1(X)]$-module,
$k_X\in H^3(\pi_1(X);\pi_2(X))$ is the $k$-invariant
determining the Postnikov $2$-type of $X$
and $\lambda_X$ is the equivariant intersection form on $\pi_2(X)$.

Moreover, for oriented Poincar\'e 4-complexes whose fundamental group has $4$-periodic cohomology, the quadratic $2$-type is actually a complete homotopy type invariant (see \cite[Theorem A]{HK}). 
%
This was improved upon by Bauer \cite{Ba88} who showed this was true
under the weaker assumption that $\pi_1(X)$ is a finite group whose Sylow 2-subgroup
has 4-periodic cohomology,
i.e.\ is isomorphic to a cyclic group $\Z/2^n$
or a generalised quaternion group $Q_{2^n}$.

Recently, it was shown in \cite{KPR20} that this is also true
when the Sylow 2-subgroup of $\pi_1(X)$ is abelian with two generators,
i.e.\ of the form $\Z/2^n \times \Z/2^m$.
The aim of this article will be to extend this to the case
where the Sylow 2-subgroup of $\pi_1(X)$ is dihedral, i.e. is isomorphic to the dihedral group $D_{2^n}$ of order $2^n$ for some $n \ge 2$.

\begingroup
\renewcommand\thethm{\Alph{thm}}
\begin{thm}
	\label{thm:main}
	Let $\pi$ be a finite group whose Sylow $2$-subgroup is dihedral. Then the homotopy type of a finite oriented Poincar\'{e} $4$-complex with fundamental group $\pi$ is determined by the isometry class of its quadratic $2$-type. That is, every isometry of the quadratic 2-types of $M$ and $N$ is realized by a homotopy equivalence $M\to N$.
\end{thm}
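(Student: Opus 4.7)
The plan is to invoke the Hambleton--Kreck classification and reduce the task to showing that the two auxiliary invariants it uses — the $w_2$-type and the image of the fundamental class in $H_4(B\pi;\Z)$ — are determined, up to the action of automorphisms of the quadratic 2-type, whenever the Sylow 2-subgroup of $\pi$ is dihedral. This is the same overall strategy used by Bauer and by Kasprowski--Powell--Ray \cite{KPR20}; the task is to make it work in the dihedral setting where neither 4-periodic cohomology nor abelian structure is available.

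\textbf{Step 1 (setup and reduction to an obstruction group).} Given $M$ and $N$ with isometric quadratic 2-types, I would first realise the isometry by a homotopy equivalence of Postnikov 2-types and then attempt to lift it to the 4-skeleton. The primary obstruction lives in an equivariant form of Whitehead's $\Gamma$-functor applied to $\pi_2$, which fits into Whitehead's certain exact sequence relating $H_4$, $\Gamma(\pi_2)$, and $\pi_3$ of the 3-skeleton. The equivariant intersection forms $\lambda_M$ and $\lambda_N$ together constrain the attaching map of the top cell modulo the image of $\Gamma$, so the remaining indeterminacy will be pinned down to a subquotient of $H_4(B\pi;\Z)$, together with a mod-$2$ reduction that controls the $w_2$-type.

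\textbf{Step 2 (dihedral cohomology and representation theory).} I would then analyse this obstruction group using the Cartan--Eilenberg decomposition to reduce to the Sylow 2-subgroup $P \cong D_{2^n}$, combined with the known structure of $H_*(D_{2^n};\Z)$, whose $4$-dimensional part is a small $2$-torsion group that can be written down explicitly. I would then exhibit enough Hermitian automorphisms of $\pi_2$ preserving the $k$-invariant $k_X$ to act transitively on this group, using the integral representation theory of $\Z[D_{2^n}]$-lattices — in particular the classification of indecomposable $\Z[D_{2^n}]$-lattices due to Reiner and successors, and cancellation/substitution properties for lattices appearing as $\pi_2$ of a Poincar\'e complex. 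The $w_2$-type is handled in parallel by a mod-$2$ reduction of the intersection form argument as in \cite{KPR20}.

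\textbf{Main obstacle.} The genuinely new difficulty is Step 2. Unlike the cyclic and quaternion Sylow 2-subgroups treated by Bauer and the rank-two abelian case treated in \cite{KPR20}, the dihedral group $D_{2^n}$ neither has 4-periodic cohomology nor is abelian; its integral representation theory involves nontrivial genera and a nonzero locally free class group, and its cohomology has essential classes outside the periodic range. Producing explicit automorphisms of the quadratic 2-type that realise the required action on the obstruction group — and verifying that they simultaneously preserve both $k_X$ and $\lambda_X$ — is where I expect the bulk of the technical work to sit, and where the argument must diverge most sharply from its predecessors.
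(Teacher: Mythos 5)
There is a genuine gap in your proposal, and it is the absence of the one reduction that makes the problem tractable. You set up the Hambleton--Kreck framework correctly and correctly identify that everything must eventually localize to the Sylow $2$-subgroup, but then in Step 2 you aim for a \emph{realization} statement: exhibit enough isometries of the quadratic $2$-type to act transitively on a subquotient of $H_4(B\pi;\Z)$ and on the $w_2$-type. That is the shape of the argument in Hambleton--Kreck's original paper (and in Bauer's), and you correctly flag it as the hard part — but the paper does not attempt it. The key input you are missing is due to Teichner (see \cite[Corollary~1.6]{KT}): combined with \cite[Theorem~1.1]{HK}, it shows that the entire realization question collapses to a single \emph{vanishing} statement, namely that $\Z\otimes_{\Z\pi}\Gamma(\pi_2(X))$ is torsion-free as an abelian group. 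Once that is in hand, the remaining auxiliary invariants come along for free, and no analysis of the automorphism group of the quadratic $2$-type is needed at all. Without this criterion you would have to prove transitivity directly, and you give no concrete mechanism for doing so.

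Your proposed tools for Step 2 also point in the wrong direction. You invoke the classification of indecomposable $\Z[D_{2^n}]$-lattices (Reiner et al.) and cancellation in genera; the paper uses none of this. Instead, after the reduction to torsion-freeness, one further reduces (via \cref{cor:torsionfree} and \cref{lemma:sylow-2}) to showing $\Tors(\Z\otimes_{\Z\pi}\Gamma(J))=0$ and $\Tors(\Z\otimes_{\Z\pi}\Gamma(J^*))=0$ for a single explicit $J\in\Omega_3^{\pi}(\Z)$ with $\pi=D_{2^n}$. One then writes down $J=\ker(d_2)$ from a presentation of the dihedral group via Fox calculus, builds short exact sequences expressing $J$ as an extension of $(I,2)$ by $\Z\pi/N$ (and dually for $J^*$), and pushes these through Bauer's lemma for $\Gamma$ of a short exact sequence to compute $\wh H_0(\pi;\Gamma(J))$ and $\wh H_0(\pi;\Gamma(J^*))$ by elementary Tate-homology diagram chases. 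That is explicit but routine once the right exact sequences are in place; it requires no representation-theoretic classification. So the gap is concrete: you have not identified the torsion-freeness criterion that replaces the transitivity argument, and the alternative path you sketch (automorphism realization plus lattice classification) is both unproven and substantially harder than what is actually needed.
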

\endgroup
By \cite[Theorem 1.1~and Remark~1.2]{HK} and Teichner~\cite{teichnerthesis} (see \cite[Corollary~1.6]{KT}),  
in order to prove that the homotopy type of a finite oriented Poincar\'{e} $4$-complex $X$ is determined by its quadratic 2-type, it suffices to show that $\Z\otimes_{\Z[\pi_1(X)]}\Gamma(\pi_2(X))$ is torsion free as an abelian group where $\Gamma$ denotes Whitehead's quadratic functor (see \Cref{sec:prelim}).  

For a finitely presented group $\pi$  and $n \ge 1$, recall that the
\textit{$n$th stable syzygy} $\Omega_n(\Z)$,
which we also write as $\Omega_n^{\pi}(\Z)$, is the set of $\Z \pi$-modules $J$
for which there exists an exact sequence
\[
	0 \to J \to F_{n-1} \to \cdots \to F_0 \to \Z \to 0
\]
where the $F_i$ are finitely generated free $\Z \pi$-modules. 
It follows from \cite{Ba88} (see also \Cref{cor:torsionfree} and \Cref{lemma:sylow-2}) that $\Z\otimes_{\Z[\pi_1(X)]}\Gamma(\pi_2(X))$ is torsion free provided $\Z\otimes_{\Z\pi} \Gamma(J)$ and $\Z\otimes_{\Z\pi} \Gamma(J^*)$ are torsion free for some $J \in \Omega_3^{\pi}(\Z)$ where $\pi$ is the Sylow 2-subgroup of $\pi_1(X)$ and $J^*=\Hom_\Z(J,\Z)$ is the dual of $J$.

In order to prove that $\Z\otimes_{\Z[\pi_1(X)]}\Gamma(\pi_2(X))$ is torsion free, it therefore suffices to accomplish the following two tasks for the finite 2-group $\pi$ which arises as the Sylow 2-subgroup of $\pi_1(X)$:

\begin{enumerate}
	\item Find a explicit parametrisation for $\Omega_3^{\pi}(\Z)$,
	i.e.\ give an explicit description of a $\Z \pi$-module $J$
	such that $J \in \Omega_3^{\pi}(\Z)$
	\item Show that $\Tors(\Z\otimes_{\Z\pi} \Gamma(J))=0$
	and $\Tors(\Z\otimes_{\Z\pi} \Gamma(J^*))=0$.	
\end{enumerate}

Recall that, if $K$ is a finite 2-complex with fundamental group $\pi$, then $\pi_2(K) \in \Omega_3^{\pi}(\Z)$. It is still an open problem, though it is a consequence of an affirmative solution to Wall's D2 problem \cite{Jo03-book}, to determine whether or not every $J \in \Omega_3^{\pi}(\Z)$ arises as $\pi_2(K)$ for a finite 2-complex $K$ with fundamental group $\pi$.
It is therefore not surprising that the existing literature on Wall's D2 problem contains many computations of $\Omega_3^{\pi}(\Z)$ (see \cite{Jo12}). 

More specifically, the case of dihedral groups was explored by Mannan and O'Shea \cite{MO13}, and also independently by Hambleton \cite{Ha19} building upon earlier work with Kreck \cite{HK93}. Both sources contain suitable parametrisations for $\Omega_3^{\pi}(\Z)$ albeit of different forms.

After recalling basic facts about Whitehead's $\Gamma$ functor and Tate cohomology in \Cref{sec:prelim}, we will then give an overview of the theory of syzygies of finite groups in \Cref{sec:syz}. In \Cref{sec:ker1}, we will make use of the result of Hambleton--Kreck \cite{HK93} to obtain an explicit parametrisation for some $J \in \Omega_3^{\pi}(\Z)$ in the case where $\pi = D_{4n}$ is the dihedral group of order $4n$, and \Cref{sec:ker2} will then be dedicated to the proof that $\Tors(\Z\otimes_{\Z\pi} \Gamma(J))=0$. In \Cref{sec:coker1}, we will obtain an explicit parametrisation for $J^*$ and, finally, in \Cref{sec:coker2} we will prove also that $\Tors(\Z\otimes_{\Z\pi} \Gamma(J^*))=0$.

\vspace{4.22mm}


We conclude by noting that every finite subgroup of $\SO(3)$ has a cyclic or dihedral Sylow 2-subgroup. In particular, by combining our result with \cite{Ba88,HK}, we get that \cref{thm:main} also holds in the case where $\pi$ is a finite subgroup of $\SO(3)$. This makes possible a complete homotopy classification of 4-manifolds whose fundamental group is $\pi$. The study of these manifolds was one of the motivations for the original results of Hambleton--Kreck \cite{HK93} as they contain all elliptic surfaces with finite fundamental group (see, for example, \cite[p.~81]{HK93-III}). These were the subject of a subsequent paper  \cite{HK93-III} where they studied exotic smooth structures on elliptic surfaces.

Note also that, if $\pi$ is a fixed point free finite subgroup of $\SO(4)$, then $\pi$ has $4$-periodic cohomology and so the results of \cite{HK} imply that finite oriented Poincar\'{e} $4$-complexes with fundamental group $\pi$ are also determined by the isometry class of its quadratic $2$-type. 

However, it is not clear whether or not this holds for all finite subgroups of $\SO(4)$. 
For example, let $\pi = D_8 \times \Z/2$. Then $\pi$ is a finite subgroup of $\SO(4)$ since it is contained in the central product $Q_8 \circ Q_8$ as the image of $Q_8 \times Q_8$ under the double cover $S^3 \times S^3 \to \SO(4)$. On the other hand, if $J \in \Omega_3^{\pi}(\Z)$, then it follows from computations of the third author \cite{ruppik} and Hennes \cite{hennes} that $\Tors(\Z\otimes_{\Z\pi} \Gamma(J))=0$ and $\Tors(\Z\otimes_{\Z\pi} \Gamma(J^*))\ne 0$.
For a finite 2-complex $K$ with $\pi_1(K) \cong \pi$, let $X$ be the boundary of a smooth regular neighbourhood of an embedding of $K$ in $\R^5$.
Then $X$ is a 4-manifold with $\pi_1(X) \cong \pi$ and $\pi_2(X) \cong J_0 \oplus J_0^*$ where $J_0 = \pi_2(K) \in \Omega_3^{\pi}(\Z)$ \cite[p.~95]{HK}. It follows that $\Tors(\Z \otimes_{\Z \pi} \pi_2(X)) \ne 0$ and so the proof of \cref{thm:main} does not extend to this case. 

It is still not known whether or not the homotopy type of a finite oriented Poincar\'{e} 4-complex with arbitrary finite fundamental group $\pi$ is determined by the isometry class of its quadratic 2-type, though we do not expect this to be true when $\pi = D_8 \times \Z/2$ (as above) or $\pi = (\Z/2)^3$ (as discussed in \cite{KPR20}).
In the case where $X$ is non-orientable, this was shown by Kim, Kojima and Raymond \cite{KKR92} to be false even for smooth 4-manifolds in the case $\pi = \Z/2$.

\subsection*{Acknowledgement.} 
DK was funded by the Deutsche Forschungsgemeinschaft under Germany's Excellence Strategy -- GZ 2047/1, Projekt-ID 390685813.
JN was supported by the UK Engineering and Physical Sciences Research Council (EPSRC) grant EP/N509577/1.
BR was supported by the Max Planck Institute for Mathematics in Bonn.
We would like to thank Mark Powell for useful discussions, Ian Hambleton for helpful comments on the introduction and an anonymous referee for their careful reading.

\section{Preliminaries}
\label{sec:prelim}

The aim of this section will be to define Whitehead's $\Gamma$ functor and Tate homology, and recall a few of their basic properties which we will use in the rest of the article. From now on, all modules will be assumed to be finitely generated left modules.

The following was first defined by Whitehead in \cite{whitehead}.
\begin{definition}[$\Gamma$ groups]
	\label{def:gamma}
	Let $A$ be an abelian group. Then $\Gamma(A)$ is an abelian group with generators the elements of $A$.
	We write $a$ as $v(a)$ when we consider it as an element of $\Gamma(A)$. The group $\Gamma(A)$ has the following relations:
	\[
		\{v(-a)-v(a)\mid a\in A\}  \quad \text{ and }
	\]
	\[
		\{v(a+b+c)-v(b+c)-v(c+a)-v(a+b)+v(a)+v(b)+v(c)\mid a,b,c\in A\}.
	\]
	In particular, $v(0_A)=0_{\Gamma(A)}$.
\end{definition}

We will be interested in the case where $A$ is a free abelian group, in which case $\Gamma(A)$ has the following simple description.

\begin{lemma}[{\cite[page 62]{whitehead}}]
	\label{lem:gammafree}
	If $A$ is free abelian with basis $\mathfrak{B}$,
	then $\Gamma(A)$ is free abelian with basis
	\[
	\{ v(b), v(b+b')-v(b)-v(b') \mid b \neq b' \in \mathfrak{B} \}.
	\]
\end{lemma}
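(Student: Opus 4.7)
The plan is to exhibit mutually inverse homomorphisms between $\Gamma(A)$ and the free abelian group $F$ on the proposed basis. Fix a total order on $\mathfrak{B}$, and write $\beta(a,a') := v(a+a') - v(a) - v(a')$ inside $\Gamma(A)$.

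The first task is to derive three consequences of the relations in \Cref{def:gamma}: that $v(0)=0$; that the polarization $\beta \colon A \times A \to \Gamma(A)$ is a symmetric $\Z$-bilinear form; and that $v(na) = n^2 v(a)$ for every $n \in \Z$. All three follow from the cubic relation by elementary manipulations. The vanishing at $0$ is obtained by taking $c=0$. Bilinearity of $\beta$ comes from rearranging the cubic relation to express $v(a+b+c)$ and comparing with the definition of $\beta(a+b,c)$; symmetry is automatic. The formula $v(na) = n^2 v(a)$ follows from the parallelogram identity $v(c+a)+v(c-a) = 2v(c)+2v(a)$, which drops out of the cubic relation at $b=-a$ together with $v(-a)=v(a)$, by induction on $n$. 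Together these yield, for any $a = \sum_i n_i b_i$ with $b_i \in \mathfrak{B}$, the expansion
\[
v(a) = \sum_i n_i^2\, v(b_i) + \sum_{i<j} n_i n_j\, \beta(b_i,b_j),
\]
which already makes the obvious map $\phi \colon F \to \Gamma(A)$ surjective.

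For the inverse, I would define a set map $w \colon A \to F$ by the same formula with the free generators of $F$ replacing $v(b_i)$ and $\beta(b_i,b_j)$, and verify that $w$ satisfies both relations of \Cref{def:gamma}. By the universal property of the presentation, $w$ then promotes to a homomorphism $\psi \colon \Gamma(A) \to F$, and the identities $\phi\circ\psi=\id$ and $\psi\circ\phi=\id$ can be checked directly on generators. The only real obstacle is verifying that $w$ respects the cubic relation: this amounts to the classical polarization identity stating that any $\Z$-valued quadratic form $q$ on a free abelian group satisfies $q(a+b+c)-q(b+c)-q(c+a)-q(a+b)+q(a)+q(b)+q(c)=0$, and reduces to a coordinate-by-coordinate check on the pure squares and the cross terms. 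Since each element of $A$ has finite support in $\mathfrak{B}$, this argument applies uniformly whether $\mathfrak{B}$ is finite or infinite.
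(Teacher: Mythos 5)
The paper does not prove this lemma; it cites Whitehead [page 62] for it. Your argument is correct and is the standard universal-quadratic-map proof: you derive bilinearity of the polarization $\beta$ and the homogeneity $v(na)=n^2 v(a)$ from the two defining relations, use these to get the expansion of $v(a)$ in terms of the proposed basis (surjectivity of $\phi\colon F\to\Gamma(A)$), and then reverse the process by checking that the explicit quadratic map $w\colon A\to F$ respects the relations of $\Gamma(A)$ (yielding the inverse $\psi$). This is essentially what Whitehead does, so there is no meaningful deviation from the cited source.
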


Recall that a \textit{$\Z \pi$-lattice} is a $\Z \pi$-module $A$
whose underlying abelian group is finitely generated torsion free,
and so is of the form $\Z^n$ for some $n \ge 0$.
For example, if $X$ is a finite oriented Poincar\'{e} $4$-complex
with finite fundamental group~$\pi$, then
\[
	\pi_2(X) \cong H_2(\wt{X};\Z) \cong H^2(\wt{X};\Z) \cong \Hom(H_2(\wt{X};\Z),\Z),
\]
is finitely-generated and torsion-free as an abelian group and so $\pi_2(X)$ is a $\Z \pi$-lattice.

If $A$ is a $\Z \pi$-lattice, then we can view $\Gamma(A)$ as a $\Z \pi$-module as follows. Firstly, by \Cref{lem:gammafree}, we can take $\Gamma(A)$ to be the subgroup of symmetric elements of $A\otimes A$ given by sending $v(a)$ to $a\otimes a$.
Observe that $v(b+b')-v(b)-v(b')$ corresponds to the symmetric tensor $b \otimes b' + b' \otimes b$. We can now let the group $\pi$ act on $\Gamma(A) \subseteq A \otimes A$ via
\[
	g\cdot \sum_i (a_i \otimes b_i) = \sum_i (g \cdot a_i) \otimes (g \cdot b_i).
\]
For $a, b \in A$, we will write
\[
	a \odot b 
	= a \otimes b + b \otimes a \in A \otimes A
\]
and we will also often write $a^{\otimes 2} = a \otimes a \in A \otimes A$
to shorten many expressions.
We will continue to use that $a \odot b = b \odot a$,
$a \odot a = 2 a \otimes a$ and
$a \odot b + c \odot b = (a + c) \odot b$ for $a, b, c \in A$.
For a map $f \colon A \rightarrow B$ of $\Z \pi$ modules we have induced
$f_* \colon \Gamma(A) \rightarrow \Gamma(B)$ with $f_*(a \otimes a) = f(a) \otimes f(a)$
and $f_*(a \odot b) = f(a) \odot f(b)$.

To compute $\Gamma$ groups we will make frequent use of the following lemma.
\begin{lemma}[{\cite[Lemma~4]{Ba88}}]
	\label{lem:bauer}
	Let $\pi$ be a group.
	If $0\to A\to B\to C\to 0$ is a short exact sequence of $\Z \pi$-lattices,
	then there exists a $\Z \pi$-lattice $D$ and short exact sequences of $\Z\pi$-modules
	\[
		0\to \Gamma(A)\to \Gamma(B)\to D\to 0
	\]
	and
	\vspace{-2mm}
	\[0\to A\otimes_\Z C\xrightarrow[]{f} D\to \Gamma(C)\to 0.\]
	If $\{a_i\},\{c_j\}$ and $\{a_i,\wt c_j\}$ are bases for $A$, $C$, and $B$ as free abelian groups respectively,
	where $\wt c_j$ is a lift of $c_j$, then the map $f$ is defined by
	\[
		f(a_i\otimes c_j) = [a_i\otimes \wt c_j +\wt c_j\otimes a_i] = [a_{i} \odot \wt c_j]
		\in D \cong \Gamma(B)/\Gamma(A).
	\]
\end{lemma}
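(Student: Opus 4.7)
The plan is to produce $D$ as a quotient of $\Gamma(B)$ and to exhibit the two exact sequences by comparing $\Z$-bases obtained from \Cref{lem:gammafree}. Concretely, I would first choose a $\Z$-basis $\{a_i\}$ of $A$, lifts $\{\wt c_j\}$ of a basis $\{c_j\}$ of $C$, so that $\{a_i\}\cup\{\wt c_j\}$ is a $\Z$-basis of $B$ (this uses only that $A,B,C$ are free abelian and the sequence splits as abelian groups). Applied to this basis, \Cref{lem:gammafree} expresses $\Gamma(B)$ as a free abelian group on the symmetric tensors $a_i^{\otimes 2}$, $\wt c_j^{\otimes 2}$, $a_i\odot a_{i'}$, $\wt c_j\odot \wt c_{j'}$, and the mixed $a_i\odot \wt c_j$; the first and third types span a $\Z\pi$-submodule which is exactly the image of the injection $\Gamma(A)\hookrightarrow \Gamma(B)$.

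Next I would define $D:=\Gamma(B)/\Gamma(A)$. Since $\Gamma(A)\subseteq\Gamma(B)$ is a $\Z\pi$-submodule (the inclusion $A\hookrightarrow B$ is $\pi$-equivariant and $\Gamma$ is functorial), $D$ inherits a $\Z\pi$-module structure, and by the basis computation above it is $\Z$-free with basis the classes $[\wt c_j^{\otimes 2}]$, $[\wt c_j\odot \wt c_{j'}]$ and $[a_i\odot \wt c_j]$. This immediately yields the first short exact sequence. The projection $B\twoheadrightarrow C$ induces a surjection $\Gamma(B)\twoheadrightarrow \Gamma(C)$ (again from the basis description, as the generators $\wt c_j^{\otimes 2}$ and $\wt c_j\odot \wt c_{j'}$ map onto a basis of $\Gamma(C)$), and this surjection annihilates $\Gamma(A)$, hence factors through a surjection $D\twoheadrightarrow \Gamma(C)$. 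Reading off kernels from the bases, the kernel is precisely the free abelian subgroup spanned by $\{[a_i\odot \wt c_j]\}$.

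Then I would identify this kernel with $A\otimes_\Z C$. The $\Z$-module $A\otimes_\Z C$ has basis $\{a_i\otimes c_j\}$, so there is a unique $\Z$-linear isomorphism $f\colon A\otimes_\Z C \xrightarrow{\cong} \Ker(D\to\Gamma(C))$ sending $a_i\otimes c_j \mapsto [a_i\odot \wt c_j]$; this establishes the second short exact sequence on the level of abelian groups.

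The main thing left to verify, and where I expect the bookkeeping to be the most delicate, is that $f$ is (i) independent of the choice of lifts $\wt c_j$ and (ii) $\Z\pi$-equivariant. For (i), changing $\wt c_j$ to $\wt c_j+a$ with $a\in A$ changes $a_i\odot \wt c_j$ by $a_i\odot a\in \Gamma(A)$, which is $0$ in $D$. For (ii), it suffices to check equivariance on generators: for $g\in\pi$, write $g\cdot \wt c_j = \wt{g c_j}+a_{g,j}$ with $a_{g,j}\in A$, and then
\[
g\cdot [a_i\odot \wt c_j]=[(g a_i)\odot \wt{g c_j}] + [(g a_i)\odot a_{g,j}] = [(g a_i)\odot \wt{g c_j}] \in D,
\]
since the second term lies in $\Gamma(A)$; this matches $f(g\cdot(a_i\otimes c_j))=f(ga_i\otimes gc_j)$. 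Once these two points are settled, the two short exact sequences are $\Z\pi$-equivariant by construction, and $D$ is a $\Z\pi$-lattice because it is $\Z$-free of finite rank.
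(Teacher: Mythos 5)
Your argument is correct. The paper does not give a proof of this statement; it is quoted from Bauer \cite[Lemma~4]{Ba88}, so there is no in-text argument to compare against. Your direct basis-counting proof is the natural one: choose a $\Z$-splitting of the sequence, apply \Cref{lem:gammafree} to the resulting basis of $B$, set $D=\Gamma(B)/\Gamma(A)$, and read off both exact sequences and the description of $f$ from the basis of $\Gamma(B)$. The only place to be a bit more explicit in a full write-up is that in the $\Z\pi$-equivariance check you tacitly use the extension of $f$ to non-basis elements by the formula $f(a\otimes c)=[a\odot\wt{c}]$ for any lift $\wt{c}$ of $c$, which you need in order to make sense of $f(ga_i\otimes gc_j)=[(ga_i)\odot\wt{gc_j}]$ when $ga_i$, $gc_j$ are not themselves basis elements; this extension is legitimate because $\odot$ is bilinear and the class $[a\odot\wt{c}]\in D$ is independent of the choice of lift, as you already verified.
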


\begin{remark}
	For the direct sum of $\Z \pi$-lattices $A$ and $B$,
	these short exact sequences split,
	and so $\Gamma(A \oplus B)
	\cong \Gamma(A) \oplus \Gamma(B) \oplus A \otimes_{\Z} B$.
\end{remark}

The second key definition we require is as follows. See \cite{brown} for a convenient reference.
\begin{definition}[Tate homology] \label{def:Tate}
	Given a finite group $\pi$ and a $\Z\pi$-module $A$,
	the \emph{Tate homology groups} $\widehat H_n(\pi;A)$
	are defined as follows.
	Let $N \colon A_\pi\to A^\pi$ denote multiplication with the norm element from the orbits $A_\pi:=\Z\otimes_{\Z\pi}A$ of $A$ to the $\pi$-fixed points of $A$, that is, $N(1 \otimes a) = \sum_{g \in \pi} g a$. This is well-defined since
	$N(1 \otimes ga) = N \cdot ga = N \cdot a = N(1 \otimes a).$
	Then
	\begin{align*}
	\widehat H_n(\pi;A)&:=H_n(\pi;A)\text{~for~}n\geq 1\\
	\wh H_0(\pi;A)&:=\ker(N)\\
	\wh H_{-1}(\pi;A)&:=\coker(N)\\
	\wh H_n(\pi;A)&:=H^{-n-1}(\pi;A)\text{~for~}n\leq -2
	\end{align*}
	We can similarly define Tate cohomology groups by, for example, letting $\wh H^{n}(\pi;A) = \wh H_{-n-1}(\pi;A)$.
\end{definition}

We will require the following properties of Tate homology, and we will use them throughout the article without further mention. 

\begin{lemma}[{\cite[VI~(5.1)]{brown}}]\label{lem:les-in-tate-homology}
	If $0\to A\to B\to C\to 0$ is a short exact sequence of $\Z\pi$-modules, then there is a long exact sequence of Tate homology groups
	\[\cdots\to \wh H_n(\pi;A)\to \wh H_n(\pi;B)\to \wh H_n(\pi;C)\to \wh H_{n-1}(\pi;A)\to\cdots\]
\end{lemma}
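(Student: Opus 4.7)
The plan is to prove this via a complete resolution of $\Z$ over $\Z\pi$. Since $\pi$ is finite, there exists a doubly-infinite exact complex $F_*$ of finitely generated free $\Z\pi$-modules that agrees with a projective resolution of $\Z$ in non-negative degrees (see Brown~VI.3). Concretely, it is obtained by splicing a finite-type projective resolution of $\Z$ with the $\Z\pi$-dual of such a resolution, the splicing map being multiplication by the norm element.

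The first step is to verify the unifying identification
\[
\wh H_n(\pi;A) \cong H_n(F_* \otimes_{\Z\pi} A) \qquad (n \in \Z),
\]
which subsumes all four cases of Definition~\ref{def:Tate}. For $n \geq 1$ this is the usual definition of group homology via a projective resolution. For $n \leq -2$, $\Z\pi$-duality identifies the tensor complex in those degrees with a cochain complex computing shifted group cohomology. The cases $n \in \{-1,0\}$ follow by directly unwinding the definitions at the splicing degree, where the connecting map in $F_*$ is, by construction, multiplication by the norm element; this yields $\ker N$ and $\coker N$ respectively. The main bookkeeping of the argument lives in these two boundary cases and is the only genuine obstacle.

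Given this identification, the lemma is then formal. Each $F_n$ is free over $\Z\pi$, hence flat, so tensoring the short exact sequence $0 \to A \to B \to C \to 0$ degreewise with $F_*$ preserves exactness, producing a short exact sequence of chain complexes
\[
0 \to F_* \otimes_{\Z\pi} A \to F_* \otimes_{\Z\pi} B \to F_* \otimes_{\Z\pi} C \to 0.
\]
The associated long exact sequence in homology is precisely the claimed long exact sequence in Tate homology, and naturality of the complete resolution gives naturality of the sequence in the given short exact sequence of coefficients.
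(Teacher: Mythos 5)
Your proof is correct and follows the standard argument via complete resolutions, which is exactly the proof in the cited reference, Brown~VI.5; the paper itself gives no proof, only that citation. A minor wording point: the splicing map is the composite $P_0 \twoheadrightarrow \Z \cong \Z^* \hookrightarrow P_0^*$ rather than literally multiplication by $N$ on a single module, but after tensoring with $A$ and using right/left exactness on the two halves this composite induces precisely $N\colon A_\pi \to A^\pi$ between the relevant subquotients, so your identification of $\wh H_0$ and $\wh H_{-1}$ with $\ker N$ and $\coker N$ goes through as stated.
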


\begin{lemma}[{\cite[VI~(5.2)]{brown}}]
	\label{lem:free-implies-tate-zero}
	If $A$ is a free $\Z\pi$-module, then $\wh H_n(\pi;A)=0$ for all $n\in\Z$. 
\end{lemma}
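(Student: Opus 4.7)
The plan is to reduce to the single case $A = \Z\pi$ and then handle three ranges of $n$ separately. First, I would observe that the norm map $N$, group homology, and group cohomology are all additive in the coefficient module, so $\wh H_n(\pi; -)$ commutes with finite direct sums. Since every finitely generated free $\Z\pi$-module is a finite direct sum of copies of $\Z\pi$, it suffices to prove $\wh H_n(\pi; \Z\pi) = 0$ for every $n \in \Z$.

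For $n \geq 1$ I would invoke Shapiro's lemma, writing $\Z\pi$ as the induced module $\Ind_{\{1\}}^{\pi} \Z$ to obtain $\wh H_n(\pi; \Z\pi) = H_n(\pi; \Z\pi) \cong H_n(\{1\}; \Z) = 0$. For $n \leq -2$ I would use that $\pi$ is finite, so induction and coinduction agree, whence $\Z\pi \cong \mathrm{coInd}_{\{1\}}^{\pi} \Z$; Shapiro's lemma for cohomology then gives $\wh H_n(\pi; \Z\pi) = H^{-n-1}(\pi; \Z\pi) \cong H^{-n-1}(\{1\}; \Z) = 0$ since $-n-1 \geq 1$.

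The two remaining degrees $n \in \{0, -1\}$ I would settle by direct inspection of the norm map. The coinvariants are $(\Z\pi)_\pi \cong \Z$ with generator $1 \otimes 1$, and the invariants are $(\Z\pi)^\pi = \Z \cdot N_\pi$ where $N_\pi := \sum_{g \in \pi} g$. Under these identifications the map $N$ sends $1 \otimes 1$ to $\sum_{g \in \pi} g = N_\pi$, so it becomes the identity $\Z \to \Z$. Hence its kernel and cokernel both vanish, yielding $\wh H_0(\pi; \Z\pi) = \wh H_{-1}(\pi; \Z\pi) = 0$.

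The main potential obstacle is mostly presentational: the splitting into three ranges feels slightly ad hoc, and one would prefer a uniform argument. A cleaner alternative would be to first build a complete resolution $F_*$ of $\Z$ by finitely generated free $\Z\pi$-modules and identify $\wh H_n(\pi; A) \cong H_n(F_* \otimes_{\Z\pi} A)$; the lemma then follows instantly because $F_* \otimes_{\Z\pi} \Z\pi \cong F_*$ is acyclic by definition. However, setting up complete resolutions from scratch requires more machinery than the Shapiro-plus-direct-computation route, so for a streamlined proof I would stick with the latter.
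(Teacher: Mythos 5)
Your proof is correct. Note that the paper does not supply its own argument for this lemma---it simply cites Brown's \emph{Cohomology of Groups}, VI~(5.2)---so there is no in-paper proof to compare against. Brown's treatment is essentially the complete-resolution approach you sketch at the end: Tate (co)homology is defined as $H_n(F_* \otimes_{\Z\pi} A)$ for a complete resolution $F_*$, and the vanishing for free $A$ is then immediate since $F_* \otimes_{\Z\pi} \Z\pi \cong F_*$ is acyclic. Your three-case argument (Shapiro's lemma for $n \geq 1$ and $n \leq -2$, using that induction and coinduction coincide over a finite group, plus a direct computation of the norm map $\Z \cong (\Z\pi)_\pi \to (\Z\pi)^\pi \cong \Z$ for $n \in \{0,-1\}$) is a legitimate and more elementary route that matches the piecewise definition of $\wh H_n$ given in the paper's \Cref{def:Tate}, at the cost of some case-splitting. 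Both are fine; your version is arguably better suited to a reader who has not yet internalized complete resolutions.
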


For a $\Z \pi$-module $A$, let $\Tors(A)$ denote the torsion subgroup of $A$ as an abelian group. The following lemmas are elementary and we refer to \cite{KPR20} for proofs. 

\begin{lemma}[{\cite[Lemma~3.2]{KPR20}}]\label{lemma:torsion-gamma-equals-tate}
	If $\pi$ is a finite group and $A$ is a $\Z \pi$-lattice,
	then there is an isomorphism of abelian groups
	\[
		\wh H_0(\pi;A) \cong \Tors(\Z\otimes_{\Z\pi} A).
	\]
\end{lemma}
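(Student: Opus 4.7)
The plan is to unpack the definition $\wh H_0(\pi;A)=\ker(N)$, where $N\colon A_\pi\to A^\pi$ is the norm map on $A_\pi=\Z\otimes_{\Z\pi}A$, and to show directly that this kernel coincides with $\Tors(A_\pi)$. So the proof splits cleanly into the two inclusions.

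The inclusion $\Tors(A_\pi)\subseteq\ker(N)$ is the easy one. Because $A$ is a $\Z\pi$-lattice, its underlying abelian group is torsion free, and hence so is the subgroup $A^\pi\subseteq A$. Any torsion element of $A_\pi$ must therefore map under $N$ to a torsion element of $A^\pi$, which must be zero.

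For the reverse inclusion $\ker(N)\subseteq\Tors(A_\pi)$, the key is the standard observation that $N$ becomes an isomorphism after tensoring with $\Q$. Concretely, I would check that the composition
\[
A^\pi\hookrightarrow A\twoheadrightarrow A_\pi\xrightarrow{\,N\,}A^\pi
\]
is multiplication by $|\pi|$ on $A^\pi$ (since $\sum_{g\in\pi}gx=|\pi|x$ for $x\in A^\pi$), and that the analogous composition starting from $A_\pi$ is also multiplication by $|\pi|$ on $A_\pi$. Since $|\pi|$ is invertible in $\Q$ and $\pi$ is finite, this forces $N\otimes\Q$ to be an isomorphism. The module $A_\pi$ is a finitely generated abelian group (as a quotient of the finitely generated free abelian group underlying $A$), so the kernel of the localisation map $A_\pi\to A_\pi\otimes\Q$ is precisely $\Tors(A_\pi)$; thus $\ker(N)\subseteq\Tors(A_\pi)$.

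I do not expect a serious obstacle here: the argument is a routine combination of the finiteness of $\pi$ (which makes $|\pi|$ invertible rationally) with the lattice hypothesis (which makes $A$ and $A^\pi$ torsion free). The only bit that warrants explicit mention is that $A_\pi$ is finitely generated as an abelian group, which is needed to identify $\ker(A_\pi\to A_\pi\otimes\Q)$ with $\Tors(A_\pi)$, but this is immediate from $A\cong\Z^n$.
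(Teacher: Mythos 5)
Your proof is correct, and it is the standard argument one would give for this well-known fact (the paper simply cites \cite[Lemma~3.2]{KPR20} rather than reproducing the proof). Both inclusions are right: the lattice hypothesis gives $\Tors(A_\pi)\subseteq\ker(N)$ because $A^\pi$ is torsion free, and the observation that $N$ composed with the natural map $A^\pi\to A_\pi$ is multiplication by $|\pi|$ forces $\ker(N)$ to be $|\pi|$-torsion. Your detour through $N\otimes\Q$ being an isomorphism is slightly more elaborate than needed — once you know the composition $A_\pi\xrightarrow{N}A^\pi\to A_\pi$ is multiplication by $|\pi|$, any $x\in\ker(N)$ satisfies $|\pi|\,x=0$ directly, without invoking finite generation of $A_\pi$ or localisation — but the reasoning is sound and amounts to the same idea.
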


\begin{remark} 
	\label{rem:H_1}
	As an abelian group, we have $\Z \otimes_{\Z \pi} A \cong A/\pi$
	where $\pi$ acts on $A$ by left multiplication.
	We will therefore also often use $a \in A$ to refer to
	the element $1 \otimes a \in \Z \otimes_{\Z\pi} A$.
\end{remark}

While we defined $\wh H_n(\pi;A)$ as abelian groups in Definition \ref{def:Tate}, it will be useful to fix more explicit descriptions when $n = 0, \pm 1$. The following will be in place from now on.
\begin{convention}
	\label{convention}
	Throughout the rest of this article, $A$ will be a $\Z \pi$-lattice.
	Following \Cref{def:Tate} and \Cref{rem:H_1},
	we will use $a \in A$ to denote both elements of the homology groups:
	\begin{align*}
		\wh H_0(\pi;A) &= \Tors(\Z\otimes_{\Z\pi} A) = \Tors(A/\pi) \\
		\wh H_{-1}(\pi;A) &= \coker(N) = A^{\pi}/(N \cdot A_{\pi}).
	\end{align*}
	Furthermore, we will write:
	\[
		\wh H_1(\pi;A) = 
			\frac{\ker(d_1 \otimes \id_A \colon C_1 \otimes_{\Z \pi} A \to C_0 \otimes_{\Z\pi} A)}
			{\IM(d_2 \otimes \id_A \colon C_2 \otimes_{\Z \pi} A \to C_1 \otimes_{\Z\pi} A)}
	\]
	where $C_2 \xrightarrow[]{d_2} C_1 \xrightarrow[]{d_1} C_0 \to \Z \to 0$
	is a choice of free $\Z \pi$ resolution for the trivial $\Z \pi$-module $\Z$.
\end{convention}

\begin{convention}
	We adopt the following notation
	convention for maps $g \colon A \rightarrow B$
	between $\Z \pi$ modules, which can also occur in various combinations:
	\begin{itemize}
		\item A subscript $_*$ as in $g_* \colon \Gamma(A) \rightarrow \Gamma(B)$ denotes the
		induced map between $\Gamma$-groups.
		\item A hat $\widehat{\phantom{g}}$ as in
		$\widehat{g} \colon \widehat{H}_{i}(\pi, A) \rightarrow \widehat{H}_{i}(\pi, B)$
		denotes the map on Tate homology.
	\end{itemize}
\end{convention}

For a $\Z \pi$-module $A$,
let $[A]_{s}$  denote the equivalence class of $A$ up to \emph{stable isomorphism},
i.e.\ up to the relation where $A \sim_{s} B$ for a $\Z \pi$-module $B$
if there exists $i, j \ge 0$ for which $A \oplus \Z \pi^i \cong B \oplus \Z \pi^j$.
For later purposes, it will often be convenient to view this
as the set $[A]_{s} = \{ B : A \sim_{s} B \}$.

We conclude this section with the following observation.
\begin{lemma}[{\cite[Lemma~4.2]{KPR20}}]
	\label{lem:gamma-stable}
	Let $A$ be a $\Z \pi$-lattice.
	Then $\wh H_0(\pi;\Gamma(A))$ only depends on the stable isomorphism class $[A]_{s}$,
	i.e.\ if $A \sim_{s} B$ for a $\Z \pi$-module $B$,
	then there is an isomorphism of abelian groups $\wh H_0(\pi;\Gamma(A)) \cong \wh H_0(\pi;\Gamma(B))$.
\end{lemma}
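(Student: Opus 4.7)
The plan is to reduce to the single claim $\wh H_0(\pi;\Gamma(A\oplus\Z\pi))\cong\wh H_0(\pi;\Gamma(A))$. Iterating this, and using that $\Gamma$ takes isomorphisms to isomorphisms, then handles any stable isomorphism $A\oplus(\Z\pi)^i\cong B\oplus(\Z\pi)^j$, via the chain
\[
\wh H_0(\pi;\Gamma(A))\,\cong\,\wh H_0(\pi;\Gamma(A\oplus(\Z\pi)^i))\,\cong\,\wh H_0(\pi;\Gamma(B\oplus(\Z\pi)^j))\,\cong\,\wh H_0(\pi;\Gamma(B)).
\]

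To establish this single claim I would invoke the direct-sum decomposition noted after \Cref{lem:bauer}: the two short exact sequences there split for $A\oplus\Z\pi$, giving
\[
\Gamma(A\oplus\Z\pi)\;\cong\;\Gamma(A)\;\oplus\;\Gamma(\Z\pi)\;\oplus\;(A\otimes_\Z\Z\pi)
\]
as $\Z\pi$-modules, where $\pi$ acts diagonally on the last factor. Applying $\wh H_0(\pi;-)$ reduces the task to checking the vanishings $\wh H_0(\pi;\Gamma(\Z\pi))=0$ and $\wh H_0(\pi;A\otimes_\Z\Z\pi)=0$.

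For $A\otimes_\Z\Z\pi$ with the diagonal action, I would change basis on each coset via the group action, passing from the $\Z$-basis $\{a_i\otimes g\}$ to $\{ga_i\otimes g\}$; the latter is still a $\Z$-basis (the change matrix on $A\otimes g$ being the action of $g$ on $A$) and it is freely permuted by $\pi$, so $A\otimes_\Z\Z\pi$ is $\Z\pi$-free and \Cref{lem:free-implies-tate-zero} applies. For $\Gamma(\Z\pi)$, I would use \Cref{lemma:torsion-gamma-equals-tate} and show directly that $\Z\otimes_{\Z\pi}\Gamma(\Z\pi)$ is torsion free. By \Cref{lem:gammafree}, the set $\{v(g):g\in\pi\}\cup\{g\odot h:g\ne h\in\pi\}$ is a $\Z$-basis of $\Gamma(\Z\pi)$ on which $\pi$ acts via $k\cdot v(g)=v(kg)$ and $k\cdot(g\odot h)=kg\odot kh$. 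Breaking this permutation action into $\pi$-orbits presents $\Gamma(\Z\pi)$ as a direct sum of permutation modules $\Z[\pi/H]$, where each stabilizer $H$ is either trivial or of the form $\{1,hg^{-1}\}$ with $hg^{-1}$ an involution that swaps $g$ and $h$. Since $\Z\otimes_{\Z\pi}\Z[\pi/H]\cong\Z$ in either case, the coinvariants $\Z\otimes_{\Z\pi}\Gamma(\Z\pi)$ form a free abelian group and thus have no torsion.

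The mild subtlety is that $\Gamma(\Z\pi)$ itself is usually not $\Z\pi$-free, because of the order-two stabilizers, so one cannot simply invoke \Cref{lem:free-implies-tate-zero} to obtain full Tate-acyclicity; however, each orbit contributes only $\Z$ to the coinvariants regardless of its stabilizer, which is exactly the reason the degree-zero vanishing still holds.
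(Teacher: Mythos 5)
Your proof is correct and follows essentially the same route as the cited source. The reduction to $\Gamma(A\oplus\Z\pi)\cong\Gamma(A)\oplus\Gamma(\Z\pi)\oplus(A\otimes_\Z\Z\pi)$, the freeness of $A\otimes_\Z\Z\pi$ via the diagonal change of basis (this is exactly what the paper cites as \cite[Lemma 4.3]{KPR20} elsewhere), and the permutation-module decomposition of $\Gamma(\Z\pi)$ with trivial or order-two stabilizers (this is the content of \cite[Lemma 2.2]{HK}, restated as \Cref{lem:HKLemma2.2}) are precisely the ingredients that make the argument go, and your closing remark correctly identifies why one cannot shortcut via full Tate-acyclicity of $\Gamma(\Z\pi)$.
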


In particular, in order to determine $\wh H_0(\pi;\Gamma(A))$ for a $\Z \pi$-module $A$, it suffices to consider $\wh H_0(\pi;\Gamma(B))$ for any $B$ inside the stable class $[A]_{s}$.

\section{Syzygies of finite groups}
\label{sec:syz}

In this section, we will recall the basic theory of syzygies of finite groups.
This offers an alternative perspective to some of the results which were discussed in \cite{KPR20}.

Recall that, for a finitely presented group $\pi$, a $\Z \pi$-module $A$  and $n \ge 1$, the \textit{$n$th stable syzygy} $\Omega_n(A)$, which we also write as $\Omega_n^{\pi}(A)$, is defined as the set of $\Z \pi$-modules $B$ for which there exists an exact sequence
\[ 0 \to B \to F_{n-1} \to \cdots \to F_0 \to A \to 0\]
where the $F_i$ are free $\Z \pi$-modules. 

The following was first shown by Swan in \cite[Corollaries 1.1, 2.1]{Sw60}. For a more recent reference, and a different proof, see \cite[Theorem 30.1]{Jo03-book}.

\begin{lemma}
	Let $\pi$ be a finite group, let $A$ be a $\Z \pi$-lattice and let $n \ge 2$.
	Then $\Omega_n(A) = [B]_{s}$ for any $B \in \Omega_n(A)$,
	i.e.\ if $B \in \Omega_n(A)$, then $B' \in \Omega_n(A)$
	if and only if $B$ and $B'$ are stably isomorphic.
\end{lemma}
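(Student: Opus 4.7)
The plan is to establish two opposing inclusions: every two modules in $\Omega_n(A)$ are stably isomorphic, and $\Omega_n(A)$ is closed under stable isomorphism when $n \geq 2$.

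The first direction follows from iterated application of Schanuel's lemma. Given two resolutions $0 \to B \to F_{n-1} \to \cdots \to F_0 \to A \to 0$ and $0 \to B' \to G_{n-1} \to \cdots \to G_0 \to A \to 0$, the classical Schanuel lemma applied to the final short exact sequences extracted from each gives $\ker(F_0 \to A) \oplus G_0 \cong \ker(G_0 \to A) \oplus F_0$, so the first syzygies are stably isomorphic. Proceeding inductively up the resolutions one short exact piece at a time, I would obtain the telescoping identity
\[
B \oplus G_{n-1} \oplus F_{n-2} \oplus G_{n-3} \oplus \cdots \cong B' \oplus F_{n-1} \oplus G_{n-2} \oplus F_{n-3} \oplus \cdots,
\]
and since all $F_i, G_i$ are free this gives $B \sim_{s} B'$. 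This direction works for every $n \geq 1$.

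For the reverse inclusion, suppose $B \in \Omega_n(A)$ and $B' \sim_{s} B$, so $B \oplus \Z\pi^a \cong B' \oplus \Z\pi^b$ for some $a,b \geq 0$. Starting from the resolution of $B$ and appending a trivial summand $\Z\pi^b$ to $F_{n-1}$ (with zero map to $F_{n-2}$) immediately yields a resolution showing $B' \oplus \Z\pi^a \in \Omega_n(A)$. It then suffices to prove the reduction principle: if $L \oplus \Z\pi \in \Omega_n(A)$ for $n \geq 2$, then $L \in \Omega_n(A)$; applying this $a$ times finishes the argument. For the reduction, given $0 \to L \oplus \Z\pi \xrightarrow{\iota} F_{n-1} \to \cdots \to F_0 \to A \to 0$, I would set $v = \iota(0,1) \in F_{n-1}$ and construct a new resolution of $A$ with top term $L$ by stabilizing the resolution with a trivial complex $\Z\pi \xrightarrow{\id} \Z\pi$ placed in degrees $n-1$ and $n-2$, then performing a change of coordinates that uses $v$ to cancel the extra $\Z\pi$ summand against the stabilization. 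The availability of the degree $n-2$ module is exactly what makes the cancellation possible, and so the hypothesis $n \geq 2$ is essential.

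The main obstacle will be making this cancellation step rigorous at the chain level: although $v$ generates a free rank-one submodule of $F_{n-1}$ (since $\iota$ is injective), $v$ need not be unimodular in $F_{n-1}$, so the naive attempt to form the quotient $F_{n-1}/\Z\pi v$ fails to produce a free module. This forces us to modify $F_{n-1}$ and $F_{n-2}$ simultaneously, choosing the change of coordinates so that the cancellation happens in the enlarged free module where $v$ becomes part of a basis, and rerouting the next differential so the complex remains exact. This is precisely the mechanism that breaks down when $n=1$, explaining why the statement is restricted to $n \geq 2$.
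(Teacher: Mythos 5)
The paper does not prove this lemma; it cites Swan and (for a different proof) Johnson, so there is no in-text argument to compare against. Your first direction (iterated Schanuel) is correct and standard, and your overall strategy for the converse — append free summands, then prove a cancellation principle — is the right shape. But the cancellation step has a genuine gap, and it is exactly where the hypothesis that $\pi$ is \emph{finite} has to enter; your proposal never uses finiteness anywhere, which should be a warning sign. The problem is that ``stabilizing so that $v$ becomes part of a basis'' is not available: the element $(v,0)\in F_{n-1}\oplus\Z\pi$ is still not unimodular (its span is a direct summand of $F_{n-1}\oplus\Z\pi$ if and only if $\Z\pi v$ is already a direct summand of $F_{n-1}$), so no automorphism of the stabilized module can move $(v,0)$ to a basis vector. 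The element $(v,1)$ \emph{is} unimodular, but the automorphism $(x,a)\mapsto(x+av,a)$ it produces fixes the differential $d_{n-1}\oplus\id$ because $d_{n-1}(v)=0$, so nothing cancels.

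What actually makes the reduction principle work is a retraction $r\colon F_{n-1}\to\Z\pi$ with $r\circ\iota|_{\Z\pi}=\id$. This exists because $\coker(\iota)=\IM(d_{n-1})\subseteq F_{n-2}$ is torsion-free (this is where $n\ge 2$ is used: the next module must be free), so the inclusion $\iota$ is $\Z$-split, and because for a \emph{finite} group $\pi$ one has a natural isomorphism $\Hom_{\Z\pi}(M,\Z\pi)\cong\Hom_\Z(M,\Z)$, giving $\Ext^1_{\Z\pi}(K,\Z\pi)=0$ for any $\Z\pi$-lattice $K$; equivalently $\Z\pi$ is relatively injective for $\Z$-split sequences. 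With $r$ in hand, the replacement resolution is
\[
0\to L\to F_{n-1}\xrightarrow{(d_{n-1},\,r)} F_{n-2}\oplus\Z\pi\xrightarrow{(d_{n-2},\,0)} F_{n-3}\to\cdots\to F_0\to A\to 0,
\]
whose exactness at $F_{n-2}\oplus\Z\pi$ uses that $r$ is the identity on the stabilized $\Z\pi$ summand of the kernel. This is the step your ``change of coordinates'' was implicitly invoking but never constructed; supplying $r$ via the relative injectivity of $\Z\pi$ is the content that needs to be added. (Minor: the indices $a$ and $b$ are swapped in your appending/cancelling step, but that is cosmetic.)
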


The following is also immediate by noting that the exact sequence for $A$ and $B$ defined above is split when restricted to the underlying abelian groups provided $A$ is torsion free.

\begin{lemma}
Let $\pi$ be a finite group, let $A$ be a $\Z \pi$-lattice and let $n \ge 1$. If $B \in \Omega_n(\Z)$, then $B$ is a $\Z \pi$-lattice.
\end{lemma}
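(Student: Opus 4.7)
The plan is to induct on $n$ by breaking the defining resolution into short exact sequences and propagating the ``finitely generated torsion-free'' property one step at a time. Write the given resolution as
\[
0 \to B \to F_{n-1} \to F_{n-2} \to \cdots \to F_0 \to \Z \to 0,
\]
and set $K_{-1} = \Z$ and, for $0 \le i \le n-1$, let $K_i = \ker(F_i \to F_{i-1})$ (with $K_{n-1} = B$). This gives short exact sequences $0 \to K_i \to F_i \to K_{i-1} \to 0$ for each $i$.

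The key observation is that if $K_{i-1}$ is torsion-free as an abelian group, then the short exact sequence $0 \to K_i \to F_i \to K_{i-1} \to 0$ splits as a sequence of abelian groups. Indeed, $K_{i-1}$ is then a finitely generated torsion-free abelian group, hence a free $\Z$-module, so $\Ext^1_{\Z}(K_{i-1}, K_i) = 0$. Therefore $K_i$ is isomorphic as an abelian group to a direct summand of $F_i$. Since $F_i$ is a finitely generated free $\Z\pi$-module and $\pi$ is finite, $F_i$ is itself a free abelian group of finite rank, and so $K_i$ is finitely generated and torsion-free as an abelian group; that is, $K_i$ is a $\Z\pi$-lattice.

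Starting the induction with $K_{-1} = \Z$, which is plainly a $\Z\pi$-lattice, we conclude by induction on $i$ that each $K_i$ is a $\Z\pi$-lattice, and in particular $B = K_{n-1}$ is a $\Z\pi$-lattice. There is no real obstacle here, since the only input needed is that torsion-freeness of the cokernel lets the short exact sequence split over $\Z$; the same argument shows more generally that $\Omega_n(A)$ consists of $\Z\pi$-lattices whenever $A$ itself is a $\Z\pi$-lattice.
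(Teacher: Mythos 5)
Your proof is correct and uses essentially the same idea as the paper, which simply remarks that the defining exact sequence splits over $\Z$ once one knows $A$ (here $\Z$) is torsion-free; you have spelled out the induction that this compressed remark is referring to.
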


It is often useful to take the perspective (see \cite[Preface]{Jo12})
that the Syzygies $\Omega_n(A)$ are, in some sense,
the $n$th derivative of the module $A$.
This is already mentioned by R.\ H.\ Fox in his definition
of Fox derivative in 1960 \cite{Fo60}.
We will now recall this definition for use in the following section. 

\begin{definition}[Fox derivative]
If $F$ is a free group with generators $g_i$, then the \textit{Fox derivative} with respect to $g_i$
is the $\Z$-module homomorphism
\[ \partial_{g_i} : \Z F \to \Z F\]
which is defined by the requirements that $\partial_{g_i} (g_j) = \delta_{ij}$ where $\delta_{ij}$ is the Kronecker delta, $\partial_{g_i}(1)=0$, and the product rule $\partial_{g_i}(xy) = \partial_{g_i}(x) + x\partial_{g_i}(y)$ for $x, y \in F$.
If $\phi \colon F \twoheadrightarrow \pi$ is a surjection of groups, then we can view $\partial_{g_i}$ as a map $\partial_{g_i} : \Z F \to \Z \pi$ by post composition with $\phi$. In particular, $\partial_{g_i}$ maps words in the generators of $\pi$ to $\Z\pi$.
\end{definition}

The main result on Fox derivatives that concerns us is as follows. A detailed account can be found, for example, in \cite[Section 1.2]{MR17}.

\begin{prop} \label{prop:fox}
	Let $\mathcal{P} = \langle x_1, \cdots, x_n \mid r_1, \cdots, r_m \rangle$
	be a group presentation with corresponding presentation complex $X_{\mathcal{P}}$,
	and $\phi \colon F = \langle x_1, \ldots, x_n \rangle \twoheadrightarrow \pi$
	the corresponding surjection.
	Then the cellular chain complex of $\wt X_{\mathcal{P}}$ is given by
\[ 
	C_*(\wt X_{\mathcal{P}}) \colon
	\quad \underbrace{C_2(\wt X_{\mathcal{P}})}_{\displaystyle = \oplus_{i=1}^m \Z \pi \langle r_i \rangle} 
	\xrightarrow[]{\; d_2 \;}
	\underbrace{C_1(\wt X_{\mathcal{P}})}_{\displaystyle = \oplus_{i=1}^n \Z \pi \langle x_i \rangle} 
	\xrightarrow[]{\; d_1 \;} 
	\underbrace{C_0(\wt X_{\mathcal{P}})}_{\displaystyle = \Z \pi \langle 1 \rangle}
\]
	where the maps (of left $\Z \pi$-modules)
	are given on the basis vectors as
	$d_2(r_i) = \sum_{j=1}^n \phi( \partial_{x_j}(r_i) ) \cdot x_j$
	and $d_1(x_j)=\phi(x_{j}) - 1$ for all $i,j$.
\end{prop}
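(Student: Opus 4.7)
The plan is to verify the description degree by degree, by examining directly the cellular structure of the universal cover and identifying the boundary maps using Fox calculus. The presentation complex $X_{\mathcal{P}}$ has a single $0$-cell $e^{0}$, one $1$-cell $e^{1}_{j}$ for each generator $x_{j}$, and one $2$-cell $e^{2}_{i}$ for each relator $r_{i}$. Fix a lift $\tilde e^{0}$ of the basepoint in $\wt X_{\mathcal{P}}$; then there are canonical lifts $\tilde e^{1}_{j}$ of the $1$-cells starting at $\tilde e^{0}$, and canonical lifts $\tilde e^{2}_{i}$ of the $2$-cells whose attaching maps are based at $\tilde e^{0}$. These lifts freely generate $C_{*}(\wt X_{\mathcal{P}})$ as left $\Z\pi$-modules, which gives the claimed identifications of modules.

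The computation of $d_{1}$ is immediate: the $1$-cell $\tilde e^{1}_{j}$ runs from $\tilde e^{0}$ to $\phi(x_{j}) \cdot \tilde e^{0}$, so $d_{1}(x_{j}) = \phi(x_{j}) - 1$.

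The real content is the formula for $d_{2}$. Since the attaching map of $e^{2}_{i}$ reads off the word $r_{i}$ in the generators, the chosen lift of this attaching loop traces a path in the $1$-skeleton of $\wt X_{\mathcal{P}}$, and this path is exactly the $1$-chain $d_{2}(r_{i})$. The key lemma to establish is therefore the following: for any word $w \in F$, the lift $\tilde w$ of $w$ starting at $\tilde e^{0}$ represents the $1$-chain
\[
	\tilde w \;=\; \sum_{j=1}^{n} \phi\bigl(\partial_{x_{j}}(w)\bigr) \cdot x_{j} \;\in\; C_{1}(\wt X_{\mathcal{P}}).
\]
I would prove this by induction on the length of $w$. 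The base cases ($w=1$ and $w=x_{k}$) are immediate from the defining properties of $\partial_{x_{j}}$. For a concatenation $w = uv$, the lift decomposes as $\tilde u$ followed by the translate $\phi(u) \cdot \tilde v$, so
\[
	\tilde w \;=\; \tilde u + \phi(u) \cdot \tilde v \;=\; \sum_{j}\phi\bigl(\partial_{x_{j}}(u) + u\,\partial_{x_{j}}(v)\bigr)\, x_{j} \;=\; \sum_{j}\phi\bigl(\partial_{x_{j}}(uv)\bigr)\, x_{j},
\]
which matches the product rule exactly. Applying this lemma to $w = r_{i}$ yields the stated formula for $d_{2}(r_{i})$.

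The step that requires the most care is handling the inverse generators cleanly: one needs the identity $\partial_{x_{j}}(x^{-1}) = -x^{-1}\partial_{x_{j}}(x)$ (itself a consequence of differentiating $x \cdot x^{-1} = 1$) and must match this against the geometric fact that traversing $x_{k}^{-1}$ from $\tilde e^{0}$ uses the cell $\phi(x_{k})^{-1}\cdot \tilde e^{1}_{k}$ with reversed orientation, contributing $-\phi(x_{k})^{-1}\cdot x_{k}$ to the chain. Once this bookkeeping is set up correctly, the inductive step is formal and the proposition follows.
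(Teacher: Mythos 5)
Your proof is correct, and it follows the standard textbook route: identify the cell structure of $\wt X_{\mathcal{P}}$, observe that $d_{1}$ records the endpoints of lifted $1$-cells, and reduce the computation of $d_{2}$ to the lemma that the cellular $1$-chain traced out by the canonical lift of a word $w$ is $\sum_{j}\phi(\partial_{x_{j}}(w))\cdot x_{j}$, proved by induction on word length via the Fox product rule. The paper itself gives no proof of this proposition --- it merely cites a reference (Section 1.2 of the cited source) --- so there is no internal argument to compare against; yours is the standard one and fills the gap completely. Two small points worth making explicit when writing this up: the base cases of the induction must include $w = x_{k}^{-1}$ alongside $w=1$ and $w=x_{k}$ (you address this in your final paragraph, but it is really part of the induction's base, not an afterthought); and it is worth a sentence noting that the formula for $\partial_{x_{j}}$ computed letter-by-letter on an unreduced spelling of $r_{i}$ agrees with its value on the reduced word, since the product rule forces cancellations on the Fox-derivative side exactly where the lifted path doubles back --- this is what makes the lemma independent of the chosen spelling of the relator.
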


If $\mathcal{P}$ is a presentation for $\pi$, then $\ker(d_2) \in \Omega_3^{\pi}(\Z)$. Hence, in order to find an explicit parametrisation of $\Omega_3^{\pi}(\Z)$, it remains to compute $\ker(d_2)$ for some presentation $\mathcal{P}$. 

\begin{remark} Whilst this method works to obtain a parametrisation for $\Omega_3(\Z)$, it is currently not known whether or not this method can always be used to find a representative whose abelian group has minimal rank. For example, it was noted by the second named author in \cite{Ni19}, that there is a family of groups $\pi = P''_{48 \cdot n}$ for $n \ge 3$ odd with 4-periodic cohomology over which there exists $J \in \Omega_3(\Z)$ with $\rank_{\Z}(J) = |\pi|-1$ but for which every known presentation $\mathcal{P} = \langle x_1, \cdots, x_n \mid r_1, \cdots, r_m \rangle$ has $m-n \ge 1$ and so has $\rank_{\Z}(\ker(d_2)) \ge 2|\pi|-1$.
\end{remark}

For a $\Z \pi$-module $A$, define the \textit{dual} $A^* = \Hom_{\Z}(A,\Z)$ which has left $\Z \pi$ action given by sending $\varphi \mapsto g \cdot \varphi$ where $(g \cdot \varphi)(x) = \varphi(g^{-1} \cdot x)$ for $x \in A$. By, for example, \cite[Lemma 1.5]{Ni20-I} this coincides with the usual dual of $\Z \pi$-modules $\Hom_{\Z \pi}(A,\Z\pi)$.
From our definition it is clear that for $\pi$ finite,
if $A$ is a $\Z \pi$-lattice, then $A^*$ is also a $\Z \pi$-lattice.

The following was proven by Hambleton--Kreck \cite[Proposition 2.4]{HK}.

\begin{prop}
	\label{prop:pi_2-SES}
	Let $X$ be a finite oriented Poincar\'{e} $4$-complex $X$
	with finite fundamental group $\pi$.
	Then there exists $J \in \Omega_3(\Z)$, an integer $r \ge 0$ and an exact sequence 
	\[ 
		0 \to J \to \pi_2(X) \oplus \Z\pi^r \to J^* \to 0.
	\]
\end{prop}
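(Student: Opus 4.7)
The plan is to extract the desired short exact sequence from the cellular chain complex of the universal cover of $X$, via a pullback construction and Poincar\'e duality.

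First, fix a finite CW structure on a complex homotopy equivalent to $X$, and let $C_* := C_*(\wt X)$ denote the cellular chain complex of the universal cover, viewed as a chain complex of finitely generated free $\Z\pi$-modules with differentials $d_i\colon C_i\to C_{i-1}$. Since $\wt X$ is simply connected and $X$ is an oriented Poincar\'e $4$-complex with finite $\pi_1 = \pi$, the homology is $H_0(C_*) = \Z$, $H_1(C_*) = 0$, $H_2(C_*) = \pi_2(X)$, $H_3(C_*) = 0$ and $H_4(C_*) = \Z$. Define $J := \ker(d_2) \subseteq C_2$; the exact sequence $0 \to J \to C_2 \to C_1 \to C_0 \to \Z \to 0$ shows $J \in \Omega_3^\pi(\Z)$. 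Setting $B_1 := \IM(d_2)$ and $B_2 := \IM(d_3)$, the filtration $B_2 \subseteq J \subseteq C_2$ gives the short exact sequence
\[
0 \to \pi_2(X) \to C_2/B_2 \to B_1 \to 0.
\]

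The key Poincar\'e duality input is to identify $C_2/B_2$ with $J^*$ up to stable isomorphism. By Poincar\'e duality, the cochain complex $C^* := \Hom_{\Z\pi}(C_*, \Z\pi)$ has cohomology $H^0 = \Z$, $H^1 = 0$, $H^2 = \pi_2(X)$, $H^3 = 0$, $H^4 = \Z$, so the exact sequence $0 \to \ker(d^2) \to C^2 \to C^3 \to C^4 \to \Z \to 0$ exhibits $\ker(d^2) \in \Omega_3^\pi(\Z)$. Since $\ker(d^2) = \{f \in C_2^* : f \circ d_3 = 0\} \cong (C_2/B_2)^*$, we deduce that $(C_2/B_2)^*$ is stably equivalent to $J$, and hence $C_2/B_2 \sim_s J^*$.

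The short exact sequence is now obtained as a pullback. Form $M := C_2 \times_{B_1} (C_2/B_2)$ with respect to the two surjections $d_2 \colon C_2 \twoheadrightarrow B_1$ and $C_2/B_2 \twoheadrightarrow B_1$. Projection $M \to C_2/B_2$ has kernel $\ker(d_2) = J$ and is surjective, giving $0 \to J \to M \to C_2/B_2 \to 0$. Projection $M \to C_2$ has kernel $\pi_2(X) = \ker(C_2/B_2 \twoheadrightarrow B_1)$, giving $0 \to \pi_2(X) \to M \to C_2 \to 0$. Since $C_2$ is free, the latter sequence splits via $c \mapsto (c, \bar c)$, so $M \cong \pi_2(X) \oplus C_2 \cong \pi_2(X) \oplus \Z\pi^{r_0}$ with $r_0 = \rk_{\Z\pi}(C_2)$. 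Combining with the stable equivalence $C_2/B_2 \sim_s J^*$ from the previous paragraph, and absorbing the resulting free summands into a (stably) larger representative of $J$ and into the $\Z\pi^{r_0}$ term, we obtain the required short exact sequence
\[
0 \to J \to \pi_2(X) \oplus \Z\pi^r \to J^* \to 0.
\]

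The main obstacle is the Poincar\'e duality step: the identification of $C_2/B_2$ with $J^*$ only holds up to stable isomorphism, so some bookkeeping of free summands is required to obtain the SES with the same $J$ appearing on both sides. This is handled by replacing $J = \ker(d_2)$ by a stabilization $J \oplus \Z\pi^a$ (still in $\Omega_3^\pi(\Z)$) and adjusting the middle and right terms of the pullback SES accordingly; the extra free summand on the left is then absorbed into the middle via its freely embedded copy, contributing to the final integer $r$.
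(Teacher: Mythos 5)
Your argument is correct in outline and is close in spirit to the chain-complex proof of Hambleton--Kreck (the paper itself only cites \cite[Proposition 2.4]{HK} and does not reprove it). The construction of $J=\ker(d_2)$, the filtration $B_2\subseteq J\subseteq C_2$ giving $0\to\pi_2(X)\to C_2/B_2\to B_1\to 0$, the identification $\ker(d^2)\cong(C_2/B_2)^*$, and the pullback $M=C_2\times_{B_1}(C_2/B_2)$ with the splitting $c\mapsto(c,\bar c)$ are all correct, and the final bookkeeping with free summands (pass to $J'=J\oplus\Z\pi^b$ and use $(\Z\pi)^*\cong\Z\pi$ plus reflexivity of lattices) does go through.

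The one point you leave implicit, and which genuinely needs to be said, is that $C_*$ must be a finite free $\Z\pi$-complex concentrated in degrees $0$ through $4$. Your sentence ``fix a finite CW structure on a complex homotopy equivalent to $X$'' does not guarantee this, and the step where you read off $\ker(d^2)\in\Omega_3^\pi(\Z)$ from the exact sequence $0\to\ker(d^2)\to C^2\to C^3\to C^4\to\Z\to 0$ silently uses $C_5=0$: otherwise $C^4\to\Z$ is not surjective with kernel $\IM(d^3)$, since $H^4=\ker(d^4)/\IM(d^3)$ rather than $C^4/\IM(d^3)$. The fix is standard: either cite Wall's result that a finite Poincar\'e $4$-complex is homotopy equivalent to a finite $4$-dimensional CW complex, or argue algebraically that $C_*(\wt X)$ is chain homotopy equivalent over $\Z\pi$ to a finite free complex concentrated in degrees $0,\dots,4$ (the cohomology with arbitrary coefficients vanishes above degree $4$, and there is no finiteness obstruction since the original complex is finite free). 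Once this is in place, the rest of your proof, including the reflexivity step $(C_2/B_2)^{**}\cong C_2/B_2$ (valid because $C_2/B_2$ is torsion-free, being an extension of the lattice $B_1$ by $\pi_2(X)$), is sound.
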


By the discussion above, we know that $J$ and $J^*$ are necessarily $\Z \pi$-lattices.
By combining Lemmas \ref{lem:bauer} and \ref{lem:gamma-stable}, it is straightforward to show the following. See \cite[Corollary~4.5]{KPR20} for a detailed proof.

\begin{corollary}
	\label{cor:torsionfree}
	If $\Tors(\Z\otimes_{\Z\pi} \Gamma(J)) = 0$ and
	$\Tors(\Z\otimes_{\Z\pi} \Gamma(J^*)) = 0$ for some 
	$J \in \Omega_3(\Z)$,
	then $\Tors(\Z\otimes_{\Z\pi} \Gamma(\pi_2(X))) = 0$.
\end{corollary}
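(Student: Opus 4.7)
The plan is to use the exact sequence of \Cref{prop:pi_2-SES} to trade the statement about $\pi_2(X)$ for two statements about $J$ and $J^*$, and to phrase everything in terms of the vanishing of $\wh H_0(\pi;-)$. Setting $M := \pi_2(X) \oplus \Z\pi^r$, which is stably isomorphic to $\pi_2(X)$, \Cref{lemma:torsion-gamma-equals-tate} rephrases the desired conclusion as $\wh H_0(\pi;\Gamma(\pi_2(X))) = 0$, and \Cref{lem:gamma-stable} upgrades this to $\wh H_0(\pi;\Gamma(M)) = 0$. Similarly the hypotheses translate to $\wh H_0(\pi;\Gamma(J)) = 0$ and $\wh H_0(\pi;\Gamma(J^*)) = 0$.

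Next I would apply \Cref{lem:bauer} to the short exact sequence $0\to J\to M\to J^*\to 0$. This produces a $\Z\pi$-lattice $D$ sitting in short exact sequences
\[
0 \to \Gamma(J) \to \Gamma(M) \to D \to 0
\quad\text{and}\quad
0 \to J \otimes_\Z J^* \to D \to \Gamma(J^*) \to 0.
\]
The relevant portions of the long exact sequences supplied by \Cref{lem:les-in-tate-homology}, combined with the two vanishing hypotheses, force $\wh H_0(\pi;\Gamma(M))$ to inject into $\wh H_0(\pi;D)$, which in turn is a quotient of $\wh H_0(\pi; J\otimes_\Z J^*)$. Hence it suffices to establish the vanishing $\wh H_0(\pi; J\otimes_\Z J^*) = 0$.

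For this final step I would use that $J \in \Omega_3(\Z)$. The standard twist $g\otimes a \mapsto g\otimes g^{-1}a$ gives a $\Z\pi$-isomorphism $\Z\pi\otimes_\Z A \cong \Z\pi^{\rank_\Z A}$ for any $\Z\pi$-lattice $A$ (where the left-hand side carries the diagonal action). Consequently, tensoring a free resolution $0 \to J \to F_2 \to F_1 \to F_0 \to \Z \to 0$ over $\Z$ with $J^*$ produces a length-three free $\Z\pi$-resolution of $J^*$ with kernel $J\otimes_\Z J^*$, showing $J\otimes_\Z J^* \in \Omega_3(J^*)$; dimension shifting in Tate homology then gives $\wh H_0(\pi; J\otimes_\Z J^*) \cong \wh H_3(\pi; J^*)$. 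The dualised resolution $0 \to \Z \to F_0^* \to F_1^* \to F_2^* \to J^* \to 0$ still has free middle terms (duals of finitely generated free $\Z\pi$-modules are free), so a second dimension-shifting argument in the opposite direction collapses $\wh H_3(\pi; J^*)$ to $\wh H_0(\pi;\Z) = \Tors(\Z) = 0$.

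The only real subtlety is the diagonal-action twist that identifies $\Z\pi\otimes_\Z A$ with a free $\Z\pi$-module, which is what allows tensoring a free resolution with $J^*$ to stay within free modules. Once this is in place, the argument is just bookkeeping across the two Bauer long exact sequences and the two dimension shifts, so I do not anticipate a significant obstacle.
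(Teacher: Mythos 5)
Your proof is correct and follows the approach the paper intends, which it delegates to \cite[Corollary~4.5]{KPR20}: translate everything to $\wh H_0(\pi;-)$ via \Cref{lemma:torsion-gamma-equals-tate} and \Cref{lem:gamma-stable}, feed the short exact sequence of \Cref{prop:pi_2-SES} through \Cref{lem:bauer}, reduce to $\wh H_0(\pi;J\otimes_\Z J^*) = 0$, and establish that vanishing by the two dimension shifts (using that $\Z\pi\otimes_\Z A$ is free under the diagonal action when $A$ is a lattice, and that dualising the resolution of $\Z$ by finitely generated frees produces a coresolution of $J^*$ with free middle terms). This is essentially the argument in the cited reference, so no further comparison is needed.
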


If $A$ is a $\Z \pi$-lattice then, by \Cref{lemma:torsion-gamma-equals-tate},
we have $\Tors(\Z \otimes_{\Z\pi} A) \cong \wh H_0(\pi;A)$.
It is well known (see, for example, \cite[III (10)]{brown}),
that this vanishes if and only if it vanishes over each Sylow $p$-subgroup $\pi_p$. 

Using this, Bauer made the following observation \cite[p.\ 5]{Ba88} in the case $n=3$ (see \cite[Section 6]{KPR20} for additional details). By examining the argument, it is not difficult to see that this extends to all $n \ge 1$ odd.

\begin{lemma} \label{lemma:sylow-2}
	Let $\pi$ be a finite group with Sylow $2$-subgroup $\pi'$.
	For $n \ge 1$ odd, let $J \in \Omega^{\pi}_n(\Z)$,
	and let $J' = \Res^{\pi}_{\pi'}(J) \in \Omega_n^{\pi'}(\Z)$ denote its restriction to $\Z\pi'$.
	If $\Tors(\Z \otimes_{\Z \pi'} \Gamma(J')) = 0$,
	then $\Tors(\Z \otimes_{\Z \pi} \Gamma(J)) = 0$.
	Similarly, if $\Tors(\Z \otimes_{\Z \pi'} \Gamma((J')^*)) = 0$,
	then $\Tors(\Z \otimes_{\Z \pi} \Gamma(J^*)) = 0$.
\end{lemma}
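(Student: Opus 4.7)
My plan is to translate the statement into Tate cohomology via \Cref{lemma:torsion-gamma-equals-tate} and then reduce to each Sylow subgroup of $\pi$ separately.

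First, $\Tors(\Z \otimes_{\Z\pi}\Gamma(J))=0$ is equivalent to $\wh H_0(\pi;\Gamma(J))=0$. It is standard (see \cite[III~(10)]{brown}) that $\wh H_0(\pi;M)$ decomposes into its $p$-primary components over primes $p$, and each component injects under restriction into $\wh H_0(\pi_p;M)$ for $\pi_p$ a Sylow $p$-subgroup of $\pi$. Since $\Gamma$ commutes with restriction---the $\pi$-action on $\Gamma(A)\subset A\otimes_\Z A$ depends only on the $\pi$-action on $A$---we have $\Res^{\pi}_{\pi_p}\Gamma(J)\cong\Gamma(\Res^{\pi}_{\pi_p}J)$, and similarly for $J^*$. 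For the Sylow $2$-subgroup $\pi'$ the hypothesis directly delivers $\wh H_0(\pi';\Gamma(J'))=0$, handling the $2$-primary component of $\wh H_0(\pi;\Gamma(J))$, and identically for $J^*$.

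The main obstacle is to show, for each odd prime $p$ and any $K \in \Omega_n^{\pi_p}(\Z)$ with $n\ge 1$ odd, that $\wh H_0(\pi_p;\Gamma(K))=0$ holds automatically. My plan here is to follow Bauer's observation \cite[p.~5]{Ba88} (elaborated in \cite[Section~6]{KPR20}), using three ingredients. \emph{(a)} At odd $p$, $\Gamma$ and $\mathrm{Sym}^2$ agree in Tate cohomology: the natural sequence $0\to\Gamma(A)\to\mathrm{Sym}^2(A)\to\wedge^2(A)/2\to 0$ has $2$-torsion cokernel, which is Tate-acyclic over $\pi_p$. \emph{(b)} $\Gamma(F)$ is a free $\Z\pi_p$-module whenever $F$ is free: using the splitting $\Gamma(\Z\pi_p^r)\cong r\,\Gamma(\Z\pi_p)\oplus \binom{r}{2}\,\Z\pi_p\otimes_\Z\Z\pi_p$, the diagonal action of $\pi_p$ on unordered pairs $\{g,h\}\subset\pi_p$ with $g\ne h$ has trivial stabilizers (any nontrivial stabilizer would force an involution in $\pi_p$, impossible for $p$ odd), while $\Z\pi_p\otimes_\Z\Z\pi_p$ is free under the diagonal action via $g\otimes h\mapsto g\otimes g^{-1}h$. \emph{(c)} Combining \emph{(b)} with Bauer's sequence (\Cref{lem:bauer}) applied to $0\to K\to F_{n-1}\to K'\to 0$ with $K'\in\Omega_{n-1}^{\pi_p}(\Z)$ yields a dimension-shift; iterating down a free $\Z\pi_p$-resolution of $\Z$ reduces the computation to vanishing Tate cohomology groups of $\Z$. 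The $n$ odd hypothesis enters in controlling the signs arising from the swap action on $A\otimes_\Z A$ which governs the interplay between $\mathrm{Sym}^2$ and $\wedge^2$ in Tate cohomology.

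For $J^*$, the same Sylow reduction applies, and the odd-prime vanishing is obtained by running the analogous dimension-shift along the coresolution $0\to\Z\to F_0^*\to\cdots\to F_{n-1}^*\to J^*\to 0$ obtained by $\Z$-dualising the given resolution of $J$; each $F_i^*$ is again a free $\Z\pi$-module as $\Z\pi$ is self-dual.
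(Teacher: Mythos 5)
Your overall strategy---passing to $\wh H_0$ via \Cref{lemma:torsion-gamma-equals-tate}, decomposing into $p$-primary parts that restrict injectively to the Sylow $p$-subgroups, observing that $\Gamma$ commutes with restriction, and using the hypothesis to kill the $2$-primary part---is the right skeleton, and your points~(a) and~(b) are correct. But the argument in~(c) does not work as stated, and this is where the content of the lemma lives. Iterating a dimension shift does not ``reduce the computation to vanishing Tate cohomology groups of $\Z$'': for a non-cyclic $p$-group $\pi_p$ the groups $\wh H_{2n}(\pi_p;\Z)$ are generally nonzero (e.g.\ $H_2((\Z/p)^2;\Z)\cong\Z/p$), so one cannot conclude from any such vanishing. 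Moreover, iterating Bauer's sequence (\Cref{lem:bauer}) down the resolution entangles each $\Gamma(K_i)$ with cross-terms $K_{i+1}\otimes K_i$ in a ladder from which the answer is not simply read off.

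What is actually needed at an odd prime $p$ is the following. Since $2$ is a unit in $\Z_{(p)}$, the idempotent $\tfrac{1}{2}(1+\tau)$ splits $(K\otimes_\Z K)_{(p)}$ as $\Gamma(K)_{(p)}\oplus\wedge^2(K)_{(p)}$, and $\wh H_0(\pi_p;\Gamma(K))$ (a $p$-group) is therefore exactly the $\tau$-fixed subgroup of $\wh H_0(\pi_p;K\otimes_\Z K)$. Splicing the given resolution of $\Z$ with its tensor by $K$ exhibits $K\otimes_\Z K\in\Omega_{2n}^{\pi_p}(\Z)$, so $\wh H_0(\pi_p;K\otimes_\Z K)\cong\wh H_{2n}(\pi_p;\Z)$. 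The crucial point is the sign: the swap $\tau$ intertwines the two iterated connecting isomorphisms associated to the two tensor factors, and consecutive connecting maps anticommute, so after commuting the $n$ shifts in one factor past the $n$ shifts in the other one sees that $\tau$ acts on $\wh H_0(\pi_p;K\otimes_\Z K)$ as $(-1)^{n^2}=(-1)^n$. For $n$ odd this is $-1$, and the $\tau$-fixed subgroup is then the kernel of multiplication by $2$ on a $p$-torsion group with $p$ odd, hence zero. The dual case is identical using the coresolution you indicate. So ``$n$ odd'' is not a side remark about signs tidied up at the end---it is the mechanism that makes the odd-primary part vanish, and your sketch needs this step made explicit in place of the claimed vanishing of $\wh H_*(\pi_p;\Z)$.
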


We conclude this section with an overview of the proof of \cref{thm:main}. As we mentioned in the introduction, it is a consequence of \cite[Theorem~1.1]{HK} and \cite{teichnerthesis} (see~\cite[Corollary~1.5]{KT}) that, in order to prove \cref{thm:main}, it suffices to prove that $\Tors(\Z\otimes_{\Z\pi} \Gamma(\pi_2(X))) = 0$ when $\pi$ is a finite group whose Sylow 2-subgroup is dihedral. 
By Corollary \ref{cor:torsionfree} and Lemma \ref{lemma:sylow-2}, it suffices to prove that $\Tors(\Z \otimes_{\Z \pi} \Gamma(J)) = 0$ and $\Tors(\Z \otimes_{\Z \pi} \Gamma(J^*)) = 0$ where $\pi$ is the dihedral group of order $2^n$ for $n \ge 1$. These results will be shown in \cref{thm:ker=0} and \cref{thm:coker=0} respectively.

\section{An explicit parametrisation for $\Omega_3(\Z)$ over dihedral groups}
\label{sec:ker1}

The aim of this section we will be to obtain an explicit parametrisation for $\Omega_3(\Z)$ in the case where $\pi = D_{2n}$ is the dihedral group of order $2n$ where $n$ is even. 
Note that, if $n$ is odd, then $D_{2n}$ has 4-periodic cohomology and so is dealt with by the results of Hambleton--Kreck \cite{HK}. In fact, it is possible to parametrise all the syzygies $\Omega_m^{D_{2n}}(\Z)$ for $m \ge 1$ in this case \cite{Jo16}.

Using the presentation
\[
	\mathcal{P} = \langle x, y \mid x^n y^{-2}, xyxy^{-1}, y^2 \rangle
\]
for $D_{2n}$,
we obtain the following partial free resolution of $\Z$
using \Cref{prop:fox}
\begin{equation}
	\label{eqn:resolution_dihedral_group}
	C_*(\mathcal{P}) \colon \quad
	0 \to \ker(d_2) 
	\to \Z \pi^3
	\xrightarrow[d_2]{\cdot 
		\left(
		\begin{smallmatrix}
			 N_x & -(1+y) \\ 
			 1+xy & x-1 \\ 
			 0 & y+1 
		\end{smallmatrix}
		\right)}
	\Z \pi^2
	\xrightarrow[d_1]{\cdot \left(\begin{smallmatrix} x-1 \\ y-1\end{smallmatrix}\right)}
	\Z \pi
	\xrightarrow[]{\varepsilon} \Z \to 0
\end{equation}
where $N_x = 1+x+\cdots+x^{n-1}$ and $\varepsilon$ is the augmentation map.
Here the matrices describing the 
left $\Z \pi$-linear differentials $d_1, d_2$ multiply from the right, 
with the elements of the free $\Z \pi$-modules written as row vectors.
In particular, the composition corresponds to the
matrix product $(d_{1} \circ d_{2})(v) = v \cdot d_{2} \cdot d_1$.
Let $N = \sum_{g \in \pi} g$ denote the group norm. Then:

\begin{lemma}
	\label{lem:quaternion_group}
	The following sequence is exact:
	\[ 
	0 \to 
	\Z \xrightarrow[]{N} 
	\Z \pi \xrightarrow[]{
		\cdot
		\left( \begin{smallmatrix}
			x-1 & 1-xy 
		\end{smallmatrix} \right)}
	\Z \pi^2 \xrightarrow[]{
		\cdot 
		\left( \begin{smallmatrix} 
			N_x & -(1+y) \\ 
			1+xy & x-1
		\end{smallmatrix} \right)
	}
	\Z \pi^2
	\]
\end{lemma}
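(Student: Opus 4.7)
The plan is to verify exactness at each spot, reducing the only nontrivial case to a computation over the cyclic subgroup algebra $R := \Z[\langle x\rangle] \cong \Z[\Z/n]$.

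Injectivity of $N$ is immediate. For exactness at $\Z\pi$, the conditions $a(x-1) = 0 = a(1-xy)$ say exactly that $a$ is right-fixed by both $x$ and $xy$; since $\langle x, xy\rangle = \pi$, this forces $a \in \Z\pi^{\pi} = \Z \cdot N$. The composition $\Z\pi \to \Z\pi^2 \to \Z\pi^2$ vanishes by direct expansion using $x^n = 1$, $y^2 = 1$, $(xy)^2 = 1$ and $xyx = y$: indeed, $(x-1)N_x + (1-xy)(1+xy) = (x^n - 1) + (1 - (xy)^2) = 0$, while $-(x-1)(1+y) + (1-xy)(x-1)$ simplifies to $0$ after substituting $xyx = y$.

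The substantive step is exactness at the middle $\Z\pi^2$: given $(a,b) \in \Z\pi^2$ annihilated by the $2\times 2$ matrix, one must produce $c \in \Z\pi$ with $(a,b) = c \cdot (x-1,\, 1-xy)$. I would decompose $\Z\pi = R \oplus Ry$ as a right $R$-module and write $a = a_0 + a_1 y$, $b = b_0 + b_1 y$. Using the crossed product relations $y N_x = N_x y$ and $yxy = x^{-1}$, the two kernel conditions unpack into four $R$-linear equations in $a_0, a_1, b_0, b_1$. The elementary identities in $R$ that $c \cdot N_x = \varepsilon(c) N_x$, $\ker(\cdot(x-1)) = \Z \cdot N_x$, and $\IM(\cdot(x-1)) = \ker(\varepsilon)$ together reduce this system to $\varepsilon(a_0) = \varepsilon(a_1) = 0$, $b_1 = -b_0 x$, and $a_0 + a_1 = b_0(x-1)$. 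Conversely, for $c = c_0 + c_1 y \in \Z\pi$, expanding $c \cdot (x-1, 1-xy)$ yields elements in exactly this family; writing $a_0 = c_0(x-1)$ (possible since $\ker(\varepsilon) = \IM(\cdot(x-1))$) and $c_1 = (c_0 - b_0)x$ realises any such $(a,b)$ as an image.

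The main obstacle is the bookkeeping of the crossed product multiplication in $\Z\pi$ over $R$; beyond that, the argument reduces exactness over $\Z\pi$ to the standard $2$-periodic resolution of $\Z$ over $R = \Z[\Z/n]$, packaged via the semidirect product decomposition $\pi = \Z/n \rtimes \Z/2$.
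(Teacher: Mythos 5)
Your argument is correct, but it is a genuinely different proof from the one in the paper. You decompose $\Z\pi$ as the $R$-module $R \oplus Ry$ with $R = \Z[\langle x\rangle]$ (i.e.\ you exploit the semidirect product $\pi = \Z/n \rtimes \Z/2$), unpack the kernel condition at the middle $\Z\pi^2$ into four $R$-linear equations, and solve them using the standard facts about $\Z[\Z/n]$ --- that $\ker(\cdot(x-1)) = \Z N_x$, $\IM(\cdot(x-1)) = \ker\varepsilon$, and $rN_x = \varepsilon(r)N_x$. (I checked the reduction: writing $a=a_0+a_1y$, $b=b_0+b_1y$, the two conditions split into $a_0N_x+b_0+b_1x^{-1}=0$, $a_1N_x+b_0x+b_1=0$, $a_0+a_1=b_0(x-1)$, $a_0+a_1=b_1(x^{-1}-1)$; the last two force $(b_1+b_0x)(x-1)=0$, hence $b_1+b_0x=mN_x$, and the first two then give $\varepsilon(a_0)=\varepsilon(a_1)=-m$, while $\varepsilon(a_0+a_1)=0$ forces $m=0$; your converse construction $c_0(x-1)=a_0$, $c_1=(c_0-b_0)x$ then checks out.) The paper instead takes the known 4-periodic free resolution of $\Z$ over $\Z Q_{4n}$ for the generalized quaternion group, applies $-\otimes_{\Z[\langle y^2\rangle]}\Z$ to descend to $\Z\pi$, and deduces exactness at the relevant $\Z\pi^2$ from $\Tor_2^{\Z C_2}(\Z,\Z) = H_2(C_2;\Z) = 0$; exactness at $\Z\pi$ is then the same fixed-point observation you make. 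Your route is elementary and self-contained but requires the crossed-product bookkeeping you flag; the paper's route is shorter and more conceptual, and has the further advantage that the nonvanishing $\Tor_1^{\Z C_2}(\Z,\Z)\cong\Z/2$ from the same tensored-down complex is reused directly in the proof of the subsequent Proposition~\ref{prop:kernel_exact_sequence}, so the dimension-shifting set-up pays for itself twice.
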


\begin{proof}
	This can be checked directly.
	However, let us give a shorter proof imitating \cite[Lemma 2.4]{HK93}.
	Consider the 4-periodic resolution
	\[  
		\Z Q_{4n} \xrightarrow[]{N_{Q_{4n}}} 
		\Z Q_{4n} \xrightarrow[]{
			\cdot
			\left( \begin{smallmatrix}
				x-1 & 1-xy 
			\end{smallmatrix} \right)}
		\Z Q_{4n}^2 \xrightarrow[]{
			\cdot 
			\left( \begin{smallmatrix} 
				N_x & -(1+y) \\ 
				1+xy & x-1
			\end{smallmatrix} \right)
		}
		\Z Q_{4n}^2 \xrightarrow{
			\cdot
			\left( \begin{smallmatrix}
				x-1 \\
				y-1 
			\end{smallmatrix} \right)}
		\Z Q_{4n}
	\]
	of $\Z$ over the generalized quaternion group
	$Q_{4n}$ from \cite[p.\ 253]{cartan1956homological}.
	The beginning of this resolution corresponds to the presentation
	$\langle x, y \mid x^n y^{-2}, xyxy^{-1} \rangle$ of $Q_{4n}$.
	
	Apply the functor $- \otimes_{\Z[\langle y^2\rangle]} \Z$,
	where $\langle y^2 \rangle \subset Q_{4n}$ is the cyclic group
	$C_{2}$ with two elements.
	Since $\Tor_3^{ \Z [ \langle y^2 \rangle ]}(\Z, \Z) \cong H_3(C_2;\Z) \cong \Z/2$
	it does not remain exact at the third term,
	but as
	$\Tor_2^{\Z [ \langle y^2 \rangle ]}(\Z, \Z) \cong H_2(C_2;\Z) = 0$,
	we conclude that
	\[	
		\Z\pi \xrightarrow[]{
		\cdot
		\left(
		\begin{smallmatrix}
			x-1 & 1-xy 
		\end{smallmatrix}
		\right)}
		\Z\pi^2 \xrightarrow[]{
		\cdot 
		\left(
		\begin{smallmatrix} 
			N_x & -(1+y) \\ 
			1+xy & x-1
		\end{smallmatrix}
		\right)}
		\Z\pi^2 
	\]
	is still exact. 
	Note that the kernel of
	$\Z\pi \xrightarrow[]{
			\cdot
			\left( \begin{smallmatrix}
			x-1 & 1-xy 
			\end{smallmatrix} \right)}
	\Z\pi^2$
	is the set of fixed points under the $\pi$-action and so is the image of the norm map.
	This implies the lemma.
\end{proof}

\begin{lemma}
	\label{lem:kernel_decomposition}
	Let $f=(f_A,f_B) \colon A \oplus B \to C$
	be a map between abelian groups.
	Then there is an exact sequence
	\[
	\begin{tikzcd}
		0 \ar[r] & 
		\Ker(f_A) \ar[r,"i"] & 
		\Ker(f) \ar[r,"j"] & 
		\Ker(q \circ f_B \colon B \to C/\IM(f_A)) \ar[r] & 0
	\end{tikzcd}
	\]
	where $i \colon a \mapsto (a,0)$,
	$j \colon (a,b) \mapsto b$ and
	$q \colon C \mapsto C/\IM(f_A)$ is the quotient map.
\end{lemma}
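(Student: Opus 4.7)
The statement is a standard piece of elementary abelian group bookkeeping, and my plan is simply to verify the four things that need checking: that $i$ and $j$ land in the claimed subgroups, that $i$ is injective, that $j$ is surjective onto $\Ker(q\circ f_B)$, and exactness at the middle term.

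First I would check well-definedness. For $a \in \Ker(f_A)$, $f(i(a)) = f_A(a) + f_B(0) = 0$, so $i(a) \in \Ker(f)$. Conversely, for $(a,b) \in \Ker(f)$, the relation $f_B(b) = -f_A(a)$ shows $f_B(b) \in \IM(f_A)$, hence $q(f_B(b)) = 0$ and $j(a,b) = b \in \Ker(q\circ f_B)$. Injectivity of $i$ is immediate since it is the inclusion onto the first summand.

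For surjectivity of $j$ onto $\Ker(q\circ f_B)$: given $b \in B$ with $f_B(b) \in \IM(f_A)$, pick $a' \in A$ with $f_A(a') = f_B(b)$; then $(-a',b)\in \Ker(f)$ and $j(-a',b)=b$. For exactness at $\Ker(f)$: $j\circ i = 0$ is immediate, and if $(a,b) \in \Ker(f)$ satisfies $j(a,b)=b=0$, then $f_A(a)=0$, so $(a,0) = i(a) \in \IM(i)$.

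There is no real obstacle; the whole statement is a short diagram chase that could even be summarised by saying it is the snake lemma applied to the evident short exact sequence $0 \to A \to A\oplus B \to B \to 0$ paired with the map $f_A \oplus f \oplus f_B$ of appropriate complexes. The reason to isolate it is presumably that in the next section the authors will want to apply it to decompose $\Ker(d_2)$ from the resolution \eqref{eqn:resolution_dihedral_group}: the matrix $d_2$ splits into the two columns treated in \Cref{lem:quaternion_group} and the remaining column $\bigl(\begin{smallmatrix} -(1+y) \\ x-1 \\ y+1\end{smallmatrix}\bigr)$, and this lemma is the right tool for peeling off the contribution of the quaternionic sub-resolution from the full kernel.
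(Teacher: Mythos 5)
Your proof is correct and follows essentially the same direct verification as the paper's: inject $i$, check $\IM(i)=\Ker(j)$, and show surjectivity of $j$ by lifting $b$ to $(-a',b)$ once $f_A(a')=f_B(b)$. The additional snake-lemma framing and the remark on how the lemma is applied to $d_2$ are accurate but not part of the argument itself.
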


\begin{proof}
	It is easy to see that $i$ is injective and that $\IM(i) = \Ker(j)$.
	To show that $j$ is surjective, let $b \in \Ker(q \circ f_B \colon B \to C/\IM(f_A))$.
	Then $f_B(b) \in \IM(f_A)$ so there exists $a \in A$ such that $f_A(a)=f_B(b)$ and so $j(-a,b) = b$.
\end{proof}

\begin{definition}
	We denote the \emph{augmentation ideal} by
	$I = I\pi = \ker (\Z \pi \xrightarrow{\varepsilon} \Z)$
	and the \emph{ideal generated by $I$ and $2$} by
	$(I, 2) = \ker (\Z \pi \xrightarrow{\varepsilon} \Z / 2)$.
\end{definition}

\begin{remark}	
	\label{rem:dualI}
	Dualising the exact sequence
	\[0\to I\to \Z\pi\xrightarrow{\varepsilon}\Z\to 0 \]
	we obtain the exact sequence
	\[0\to \Z\xrightarrow{N}\Z\pi\to I^*\to 0.\]
	In particular, the dual of $I$ is isomorphic to $\Z\pi/N$.
\end{remark}

\begin{prop}
	\label{prop:kernel_exact_sequence}
	With respect to the inclusion $\Ker(d_2) \subseteq \Z \pi^3$,
	there is an exact sequence:
	\[
		0 \to 
		\Z \pi / N \xrightarrow[i]{\cdot \left(\begin{smallmatrix} x-1 & 1-xy & 0 \end{smallmatrix}\right)}
		\Ker(d_2) \xrightarrow[j]{\cdot \left(\begin{smallmatrix} 0 \\ 0 \\ 1 \end{smallmatrix}\right)} 
		(I,2) \to 0.
	\]
	Furthermore, we have $j(1+y,-N_x,2)=2$, $j(x-1,0,x-1)=x-1$ and $j(0,0,y-1)=y-1$
	which gives lifts of the $\Z \pi$-module generators for $(I,2)$.
\end{prop}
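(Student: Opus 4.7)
The plan is to apply the kernel decomposition lemma (\Cref{lem:kernel_decomposition}) to $d_2$ with respect to the splitting $\Z\pi^3 = \Z\pi^2 \oplus \Z\pi$ separating the first two coordinates from the third. Write $f_A \colon \Z\pi^2 \to \Z\pi^2$ for right multiplication by the $2 \times 2$ matrix appearing in \Cref{lem:quaternion_group}, and $f_B \colon \Z\pi \to \Z\pi^2$ for right multiplication by $(0,\,y+1)$, so that $d_2 = (f_A,f_B)$. \Cref{lem:kernel_decomposition} then produces an exact sequence
\[
0 \to \Ker(f_A) \to \Ker(d_2) \to \Ker(q \circ f_B) \to 0,
\]
where $q$ denotes the projection onto $\Z\pi^2/\IM(f_A)$. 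It remains to identify the two outer terms with $\Z\pi/N$ and $(I,2)$ respectively.

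The left-hand identification is almost immediate from \Cref{lem:quaternion_group}: the kernel of $f_A$ is the image of right multiplication by $(x-1,\,1-xy)$, and the kernel of this latter map is $\Z \cdot N$. This produces the map $i$ with its asserted formula and gives injectivity at the left end.

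For the right-hand term, I would verify the two inclusions separately. The inclusion $(I,2) \subseteq \Ker(q \circ f_B)$ uses that $\Ker(q \circ f_B)$ is a $\Z\pi$-submodule of $\Z\pi$ (since $q \circ f_B$ is $\Z\pi$-linear), so it suffices to show that $2$, $x-1$, $y-1$ all lie in $\Ker(q \circ f_B)$. These are precisely the third coordinates of the three explicit triples $(1+y,\,-N_x,\,2)$, $(x-1,\,0,\,x-1)$, and $(0,\,0,\,y-1)$; that each such triple lies in $\Ker(d_2)$ is a direct matrix computation using $y^2 = 1$, $N_x \cdot x = N_x$, and the commutation rule $xy = yx^{-1}$.

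The main obstacle is the reverse inclusion $\Ker(q \circ f_B) \subseteq (I,2)$, which requires a mod-$2$ obstruction. For this, I would apply the ring homomorphism $\phi \colon \Z\pi \to \Z[C_2]$ sending $x \mapsto 1$ and $y \mapsto y$, which is well-defined because each of the three relators $x^n y^{-2}$, $xyxy^{-1}$, $y^2$ maps to $1$. Given $c \in \Ker(q \circ f_B)$ with certificate $(u_1, u_2) \in \Z\pi^2$ realizing $(0,\,(y+1)c) \in \IM(f_A)$, applying $\phi$ reduces the second defining equation to $(1+y)\bigl(\phi(u_1) + \phi(c)\bigr) = 0$, forcing $\phi(u_1) + \phi(c) \in \Z(1-y)$; the first equation reduces to $n\phi(u_1) \in \Z(1+y)$, which for $n > 0$ forces $\phi(u_1) \in \Z(1+y)$. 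Thus $\phi(c)$ lies in $\Z(1+y) + \Z(1-y)$, and every element of this subgroup has even augmentation. Since $\varepsilon$ factors through $\phi$, we conclude $\varepsilon(c)$ is even, i.e.\ $c \in (I,2)$.
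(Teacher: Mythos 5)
Your proposal is correct, and it departs from the paper's proof in the key step. The initial framework is the same: you apply \Cref{lem:kernel_decomposition} to the splitting $\Z\pi^3 = \Z\pi^2 \oplus \Z\pi$ and invoke \Cref{lem:quaternion_group} to identify $\Ker(f_A) \cong \Z\pi/N$, exactly as the paper does. The divergence is in showing $\Ker(q\circ f_B) = (I,2)$. The paper observes that $q \circ f_B$ factors through the homology of the truncated complex $\Z\pi^2 \xrightarrow{f_A} \Z\pi^2 \xrightarrow{d_1} \Z\pi$, identifies this homology as $\Z/2$ via $\Tor_1^{\Z[\langle y^2\rangle]}(\Z,\Z) \cong \Z/2$ over the quaternion resolution, and then rules out the zero map by a contradiction with the exactness of the full resolution; since the unique nonzero $\Z\pi$-linear map $\Z\pi \to \Z/2$ is the mod-$2$ augmentation, this forces the kernel to be $(I,2)$. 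You instead prove the two inclusions by hand: the easy inclusion $(I,2) \subseteq \Ker(q\circ f_B)$ via the explicit lifts (which the proposition records anyway), and the reverse inclusion via the quotient ring map $\phi \colon \Z\pi \to \Z[C_2]$, $x\mapsto 1$, $y\mapsto y$, reducing the certificate equations mod $\phi$ and extracting that any $c \in \Ker(q\circ f_B)$ has even augmentation. Your route is more elementary and self-contained (it avoids the $\Tor$ computation and the contradiction argument), at the cost of being more computational; the paper's route is more structural and reuses the quaternion machinery already set up in \Cref{lem:quaternion_group}. One immaterial slip: with the right-multiplication convention the certificate should read $(0,\,c(y+1)) \in \IM(f_A)$ rather than $(0,\,(y+1)c)$, but since $\Z[C_2]$ is commutative this does not affect the argument after applying $\phi$.
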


\begin{proof}
	Follows by applying the decomposition in \Cref{lem:kernel_decomposition}
	to the $d_2$ differential in the resolution
	(\ref{eqn:resolution_dihedral_group}) for the dihedral group.
	Here $f_A = \cdot 
		\left( \begin{smallmatrix}
			N_x & -(1+y) \\ 
			1+xy & x-1 
		\end{smallmatrix} \right)$
	corresponds to the first two rows of the matrix, and
	$f_B = \cdot \left(\begin{smallmatrix} 0 & y + 1 \end{smallmatrix}\right)$
	to the bottom row.
	Now use \Cref{lem:quaternion_group} to identify
	$\ker f_{A}$ with $\Z \pi / N$.
	
	To identify $\ker(q \circ f_{B} \colon \Z \pi \rightarrow \Z \pi^{2} / \IM f_{A})$,
	consider again the resolution
	\[  
		\Z Q_{4n} \xrightarrow[]{N_{Q_{4n}}} 
		\Z Q_{4n} \xrightarrow[]{
			\cdot
			\left( \begin{smallmatrix}
			x-1 & 1-xy 
			\end{smallmatrix} \right)}
		\Z Q_{4n}^2 \xrightarrow[]{
			\cdot 
			\left( \begin{smallmatrix} 
			N_x & -(1+y) \\ 
			1+xy & x-1
			\end{smallmatrix} \right)
		}
		\Z Q_{4n}^2 \xrightarrow{
			\cdot
			\left( \begin{smallmatrix}
			x-1 \\
			y-1 
			\end{smallmatrix} \right)}
		\Z Q_{4n}
	\]
	of $\Z$ over the generalized quaternion group
	$Q_{4n}$ from the proof of \cref{lem:quaternion_group}.
	Since $\Tor_1^{\Z [ \langle y^2 \rangle]}(\Z, \Z) \cong H_1(C_2;\Z)\cong \Z/2$,
	the sequence
	\begin{equation}
	\label{homologyZ2}
			\Z \pi^2 \xrightarrow[]{f_A=
			\cdot 
			\left( \begin{smallmatrix} 
			N_x & -(1+y) \\ 
			1+xy & x-1
			\end{smallmatrix} \right)
		}
		\Z \pi^2 \xrightarrow{
			\cdot
			\left( \begin{smallmatrix}
			x-1 \\
			y-1 
			\end{smallmatrix} \right)}
		\Z\pi
	\end{equation}	
	has homology $\Z/2$.
	As $f_B=\cdot \left(\begin{smallmatrix} 0 & y + 1 \end{smallmatrix}\right)$
	composed with
	$\cdot \left( \begin{smallmatrix}
	x-1 \\
	y-1 
	\end{smallmatrix} \right)$ 
	is trivial, the map
	$q \circ f_{B} \colon \Z \pi \rightarrow \Z \pi^{2} / \IM f_{A}$ factors through
	\[
	\Z/2=\ker\left(	\Z \pi^{2} / \IM f_{A} \xrightarrow{
		\cdot
		\left( \begin{smallmatrix}
		x-1 \\
		y-1 
		\end{smallmatrix} \right)}
	\Z\pi\right).
	\]
	To see that $\ker(q \circ f_{B} \colon \Z \pi \rightarrow \Z \pi^{2} / \IM f_{A})\cong (I,2)$
	it remains to show that $(0,y+1)$ is non-trivial in $\Z \pi^{2} / \IM f_{A}$.
	Assume that $(0,y+1)$ is in the image of $f_A$, then the exactness of \eqref{eqn:resolution_dihedral_group}
	implies that \eqref{homologyZ2} is exact. But \eqref{homologyZ2} has homology $\Z/2$ as mentioned above.
\end{proof}

\begin{remark}
	It will also be useful to note that $j(0,xy-1,xy-1) = xy-1$.
	The following equalities in $\Z \pi$ will be used without
	comment in our calculations:
	\begin{itemize}
		\item $(1 + x^{k} y)(1 - x^{k} y) = 0 = (1 - x^{k} y)(1 + x^{k} y)$
		\item $\overline{x^{k} y} = x^{k} y$, 
		$\overline{N_x} = N_x$, 
		$\overline{1 \pm xy} = 1 \pm xy$, 
		$\overline{1+y} = 1+y$
		\item $(1 - xy)(x + y) = 0$
		\item $xy-1 = (x-1)y+(y-1)$
	\end{itemize}
	Here $\overline{\phantom{x}}$ is the usual involution on the group ring $\Z \pi$
	induced by sending $g \mapsto g^{-1}$ for $g \in \pi$.
\end{remark}

\section{Computing \texorpdfstring{$\widehat{H}_0(\pi;\Gamma(\Ker(d_2)))$}{the kernel}}
\label{sec:ker2}

The aim of this section will be the following theorem,
whose proof appears on page~\pageref{proof:ker=0}.
\begin{thm}
	\label{thm:ker=0}
	If $\pi$ is a dihedral group of order $2n$ for $n$ even,
	then $\widehat{H}_0(\pi;\Gamma(\Ker(d_2)))=0$.	
\end{thm}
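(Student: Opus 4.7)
The plan is to apply Bauer's Lemma \ref{lem:bauer} to the short exact sequence
\[ 0 \to I^* \xrightarrow{i} \Ker(d_2) \xrightarrow{j} (I,2) \to 0 \]
from \Cref{prop:kernel_exact_sequence}, using the identification $\Z\pi/N \cong I^*$ from \Cref{rem:dualI}. This yields a $\Z\pi$-lattice $D$ fitting into two short exact sequences
\begin{align*}
  0 \to \Gamma(I^*) \xrightarrow{i_*} \Gamma(\Ker(d_2)) \to D \to 0, \\
  0 \to I^* \otimes_\Z (I,2) \xrightarrow{f} D \to \Gamma((I,2)) \to 0.
\end{align*}
Passing to the associated long exact sequences in Tate homology, the vanishing of $\widehat H_0(\pi;\Gamma(\Ker(d_2)))$ reduces to showing that the image of $\widehat H_0(\pi;\Gamma(I^*))$ in $\widehat H_0(\pi;\Gamma(\Ker(d_2)))$ vanishes, together with $\widehat H_0(\pi;D)=0$; this last is in turn controlled by the Tate homology of $\Gamma((I,2))$ and of $I^*\otimes_\Z(I,2)$ and the connecting maps between them.

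Each of the flanking Tate groups should be accessible via further applications of Bauer's Lemma. For $\widehat H_0(\pi;\Gamma(I^*))$, the short exact sequence $0 \to \Z \xrightarrow{N} \Z\pi \to I^* \to 0$ combined with the explicit $\Z$-basis of $\Gamma(\Z\pi)$ from \Cref{lem:gammafree} (and the trivial action on $\Gamma(\Z)\cong\Z$) makes the computation fairly direct. For $\widehat H_0(\pi;\Gamma((I,2)))$ and $\widehat H_0(\pi;I^*\otimes_\Z(I,2))$, the short exact sequence $0 \to (I,2) \to \Z\pi \to \Z/2 \to 0$ reduces the problem to Tate-homology computations involving $\Z\pi$ (which is cohomologically trivial) and $\Z/2$ (with trivial action), after also tensoring the sequence with $I^*$ over $\Z$ to handle the mixed term.

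The main obstacle will be the bookkeeping of Bauer's map $f$ and of the connecting homomorphisms, all of which are defined in terms of choices of lifts. Any candidate torsion class in $\Z\otimes_{\Z\pi}\Gamma(\Ker(d_2))$ can be written as a $\Z$-linear combination of symmetric tensors built from the $\Z\pi$-generators of $\Ker(d_2)$ supplied by \Cref{prop:kernel_exact_sequence}, namely $(x-1,1-xy,0)$ together with the lifts $(1+y,-N_x,2)$, $(x-1,0,x-1)$, $(0,0,y-1)$, and must then be exhibited as the image of the norm applied to some lift. The algebraic cancellations recorded in the remark following \Cref{prop:kernel_exact_sequence} -- in particular $(1+x^ky)(1-x^ky)=0$, $(1-xy)(x+y)=0$, and the involution identities $\overline{1\pm xy}=1\pm xy$, $\overline{1+y}=1+y$ -- should be exactly what is needed to produce the required lifts. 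I expect the kernel case treated here to be lighter than the cokernel case of \Cref{sec:coker2}, because $I^*$ is close to being free and so $\Gamma(I^*)$ already has well-behaved Tate homology.
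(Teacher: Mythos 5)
Your setup is right as far as it goes: applying \Cref{lem:bauer} to the short exact sequence from \Cref{prop:kernel_exact_sequence}, producing the quotient $D$ and the two short exact sequences, and then passing to the long exact sequences in Tate homology -- this is exactly how \Cref{sec:ker2} begins. The first reduction you state is also fine, since $\widehat H_0(\pi;\Gamma(I^*)) = \widehat H_0(\pi;\Gamma(\Z\pi/N)) = 0$ by \cite[Theorem~2.1]{HK} and \Cref{rem:dualI}, so the image of that group in $\widehat H_0(\pi;\Gamma(\Ker(d_2)))$ vanishes trivially and the map $\widehat q$ is injective.

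The gap is the second half of your reduction: you assert that it remains to show $\widehat H_0(\pi;D)=0$, but this group is \emph{not} zero. In fact \Cref{prop:H_0(D)} shows $\widehat H_0(\pi;D)\cong \Z/2\oplus\Z/2$. The reason your hoped-for vanishing fails is precisely the mechanism you invoke: by \Cref{lemma:f_*=0} the map $\widehat f$ is zero, so $\widehat{j_*}$ injects $\widehat H_0(\pi;D)$ into $\widehat H_0(\pi;\Gamma((I,2)))$; but the latter is $\Z/2\oplus\Z/2$ by \Cref{lemma:H_0(G(I2))}, and \Cref{lemma:a_1_a_2} exhibits explicit classes $\alpha_1,\alpha_2\in\widehat H_0(\pi;D)$ hitting the two generators, so $\widehat{j_*}$ is in fact an isomorphism. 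What one must show instead is that the boundary map in
\[
0\to\widehat H_0(\pi;\Gamma(\Ker(d_2)))\xrightarrow{\widehat q}\widehat H_0(\pi;D)\xrightarrow{\ \partial\ }\widehat H_{-1}(\pi;\Gamma(\Z\pi/N))
\]
is \emph{injective}. This is the genuine content of the argument and it occupies most of \Cref{sec:ker2}: one computes $\partial(\alpha_1)=2(N_x\otimes N_x)$ and $\partial(\alpha_2)=n(N_x\otimes N_x)+2(\sigma\otimes\sigma)$ via the explicit norm calculations in \Cref{lemma:a_1_a_2}, and then proves these two classes are linearly independent in $\widehat H_{-1}(\pi;\Gamma(\Z\pi/N))$ (\Cref{lemma:LI}, built on the structural description of $\widehat H_{-1}(\pi;\Gamma(\Z G))$ in \Cref{lem:HKLemma2.2}). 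Your closing intuition that the kernel case should be ``lighter'' because $\Gamma(\Z\pi/N)$ is well-behaved is therefore misleading: the vanishing of $\widehat H_0(\pi;\Gamma(\Z\pi/N))$ only buys you injectivity of $\widehat q$, and the bulk of the work is the injectivity of $\partial$ into a highly nontrivial group $\widehat H_{-1}(\pi;\Gamma(\Z\pi/N))$, a step your proposal does not anticipate.
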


\begin{remark}
	Computer assisted calculations verifying
	the vanishing of $\widehat{H}_0(\pi;\Gamma(\Ker(d_2)))$
	where $\pi = D_{2n}$ and $n \le 24$ can be found at \cite{algorithm}.
\end{remark}

Let $D = \Gamma(\Ker(d_2)) / \Gamma(\Z \pi/N)$,
i.e.\ so that  there is an exact sequence
\[ 
	0 \to
	\Gamma(\Z \pi/N) \xrightarrow[]{i_*}
	\Gamma(\Ker(d_2)) \xrightarrow[]{q} D
	\to 0
\]
where $q$ is the quotient map. By \cref{lem:bauer} applied to the
decomposition of $\ker(d_2)$ in \Cref{prop:kernel_exact_sequence} there is an exact sequence
\[ 
	0 \to (\Z \pi/N) \otimes_{\Z} (I,2)
	\xrightarrow[]{f} D
	\xrightarrow[]{j_*} \Gamma((I,2))
	\to 0.
\]
By the work done previously, the map $f$ is given by
\begin{align*}
	f \colon 
	(\Z \pi/N) \otimes_{\Z} (I,2) & \to  
	D = \Gamma(\Ker(d_2)) / \Gamma(\Z \pi/N) \\
	1 \otimes 2  &\mapsto   [(x-1,1-xy,0) \odot (1+y,-N_x,2)]  \\
	1 \otimes (y-1)  &\mapsto [(x-1,1-xy,0) \odot (0,0,y-1)] \\
	1 \otimes (xy-1)  & \mapsto  [(x-1,1-xy,0) \odot (0,xy-1,xy-1)]
\end{align*}
Here we decided to define the map $f$ using lifts of the elements
$2, y-1, xy-1$ as opposed to $2, y-1, x-1$, since the following calculations
will be easier with respect to the generating set
$x, xy$ consisting of order $2$ elements of $D_{2n}$. 

Now consider the long exact sequence on Tate homology coming from the first exact sequence.
By \cite[Theorem~2.1]{HK} and \cref{rem:dualI}, 
we have that $\widehat{H}_0(\pi; \Gamma(\Z \pi/N)) = 0$ and so we have:
\[ 
	\ldots \to
	0 \to \widehat{H}_0(\pi;\Gamma(\Ker(d_2)))
	\xrightarrow[]{\widehat{q}} \widehat{H}_0(\pi;D)
	\xrightarrow[]{\partial} \widehat{H}_{-1}(\pi;\Gamma(\Z \pi/N))
	\to \ldots
\]
where $\partial$ denotes the boundary map in Tate homology.

We will now prepare a sequence of lemmas,
which will then lead to a proof 
of the following \Cref{prop:H_0(D)} on page~\pageref{proof:H_0(D)}.
From now on, let
\[
	\sigma
	=(1+yx)\sum_{i=1}^{n/2}x^{2i} 
	= (1+yx)(x^2 + x^4 + \ldots + x^{n})
	= (1+yx)(1 + x^2 + x^4 + \ldots + x^{n-2}).
\]
\begin{prop}
	\label{prop:H_0(D)}
	There is an isomorphism of abelian groups
	\[
		\widehat{H}_{0}(\pi;D) 
		\cong 
		\Z/2 \, \langle \alpha_1 \rangle \oplus \Z/2 \, \langle \alpha_2 \rangle
	\]
	where the images in $\widehat{H}_{-1}(\pi;\Gamma(\Z \pi/N))$
	of the generators $\alpha_1, \alpha_2$
	under the boundary map are
	\begin{align*}
		\partial(\alpha_1) & = 2 \cdot (N_x \otimes N_x) \text{ and} \\
		\partial(\alpha_2) 
		& = n \cdot (N_x \otimes N_x) + 2 \cdot (\sigma \otimes \sigma).
	\end{align*}
\end{prop}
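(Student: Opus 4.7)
My approach is to exploit the short exact sequence
\[
0 \to (\Z\pi/N) \otimes_\Z (I,2) \xrightarrow{f} D \xrightarrow{j_*} \Gamma((I,2)) \to 0
\]
provided by \Cref{lem:bauer}, together with its associated long exact sequence in Tate homology. The plan is to first compute the Tate homology of the two outer terms in low degrees, then use this to identify $\widehat{H}_0(\pi; D)$ along with explicit cycle representatives in $D$, and finally lift these representatives through $q \colon \Gamma(\Ker(d_2)) \to D$ in order to compute the boundary $\partial$ into $\widehat{H}_{-1}(\pi; \Gamma(\Z\pi/N))$ by direct manipulation in $\Z\pi$.

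To compute $\widehat{H}_*(\pi; \Gamma((I,2)))$ I would apply \Cref{lem:bauer} again to the short exact sequence of $\Z\pi$-lattices $0 \to I \to (I,2) \to \Z \to 0$ (where the map to $\Z$ sends the class of $2$ to $1$). This breaks $\Gamma((I,2))$ into pieces built from $\Gamma(I)$, $I \otimes_\Z \Z \cong I$, and $\Gamma(\Z) \cong \Z$. The Tate homology of $\Gamma(I)$ is tractable because $I$ carries the standard $\Z$-basis $\{g - 1 : g \ne 1\}$, on which the $\pi$-action is transparent, and \Cref{lem:gammafree} yields an explicit $\Z$-basis for $\Gamma(I)$. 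The computation of $\widehat{H}_*(\pi; (\Z\pi/N) \otimes_\Z (I,2))$ runs in parallel, using the identification $\Z\pi/N \cong I^*$ from \Cref{rem:dualI}. Running these through the long exact sequence should produce $\widehat{H}_0(\pi; D) \cong (\Z/2)^2$ and at the same time provide cycle representatives in $D$ for the two generators $\alpha_1, \alpha_2$.

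The main obstacle is computing the boundary $\partial$. For each generator $\alpha_i$ of $\widehat{H}_0(\pi; D)$ I need to produce a lift $\gamma_i \in \Gamma(\Ker(d_2))$ with $q(\gamma_i) = \alpha_i$; then $N \cdot \gamma_i$ is $\pi$-invariant and, since $q(N \gamma_i) = 0$, lies in the image of $i_* \colon \Gamma(\Z\pi/N) \to \Gamma(\Ker(d_2))$, and its preimage modulo $N$-norms in $\Gamma(\Z\pi/N)$ is $\partial(\alpha_i)$. The appearance of $\sigma = (1+yx)(x^2 + x^4 + \cdots + x^n)$ in $\partial(\alpha_2)$ is structurally natural: $\sigma$ is precisely the norm of the index-$2$ dihedral subgroup $\langle yx, x^2 \rangle$ of $D_{2n}$ (which exists because $n$ is even), while $N_x$ is the norm of the index-$2$ cyclic subgroup $\langle x \rangle$. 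This suggests that the correct lift for $\alpha_2$ involves $\sigma$-weighted combinations of the generators of $\Ker(d_2)$, whereas $\alpha_1$ admits a simpler lift built from the element $(1+y, -N_x, 2)$. The remaining work is bookkeeping: expanding these lifts inside $\Gamma(\Ker(d_2)) \subseteq \Ker(d_2) \otimes \Ker(d_2)$, applying the norm term by term, and simplifying using the identities $(1 \pm xy)(1 \mp xy) = 0$, $(1-xy)(x+y) = 0$, and $xy - 1 = (x-1)y + (y-1)$ listed after \Cref{prop:kernel_exact_sequence}, keeping careful track of cancellation modulo $N$ to obtain the coefficients $2$, $n$, and $2$ in the claimed formulas.
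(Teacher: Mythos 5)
Your overall strategy matches the paper's exactly: the same short exact sequence $0 \to (\Z\pi/N)\otimes_\Z(I,2) \xrightarrow{f} D \xrightarrow{j_*} \Gamma((I,2)) \to 0$, the same decomposition of $\Gamma((I,2))$ via $0 \to I \to (I,2) \to \Z \to 0$, and the same method for the boundary map (lift to $\Gamma(\Ker(d_2))$, multiply by $N$, pull back through $i_*$). Your observation that $\sigma$ is the norm of the index-two dihedral subgroup $\langle yx, x^2\rangle$ is correct and a nice structural explanation for its appearance.

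However, there is a genuine gap in the step where you assert that ``running these through the long exact sequence should produce $\widehat{H}_0(\pi;D)\cong(\Z/2)^2$.'' The long exact sequence segment
\[
\widehat{H}_0(\pi;(\Z\pi/N)\otimes_\Z(I,2)) \xrightarrow{\widehat{f}} \widehat{H}_0(\pi;D) \xrightarrow{\widehat{j_*}} \widehat{H}_0(\pi;\Gamma((I,2)))
\]
has outer terms $\Z/2$ and $(\Z/2)^2$, so without further information $\widehat{H}_0(\pi;D)$ could have order anywhere between $2$ and $8$; the naive expectation would be that $\widehat{f}$ is injective and the answer is $(\Z/2)^3$. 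The paper's pivotal Lemma~\ref{lemma:f_*=0} shows the surprise: $\widehat{f}(1\otimes N) = 0$, which forces $\widehat{j_*}$ to be injective and (combined with explicit preimages of $\alpha_{xy}-\alpha_y$ and $\alpha_y$) an isomorphism. Proving $\widehat{f}=0$ is not formal: one lifts $N\in(I,2)$ to $(N,-\tfrac{n}{2}N,N)\in\Ker(d_2)$, writes this element three different ways as $N_x v_1$, $(1+xy)\sum x^{2i}v_2$, and $(1+y)\sum x^{2i}v_3$ with $v_i\in\Ker(d_2)$, matches each factorization against one of the three components of $(x-1,1-xy,0)$, and moves the self-conjugate elements $N_x$, $1+xy$, $1+y$ across the tensor to kill each summand in $\Z\otimes_{\Z\pi}D$. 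Without identifying and carrying out something equivalent to this computation, the rank claim does not follow. You also implicitly rely on $\widehat{H}_0(\pi;\Gamma(I))=0$ (and $\widehat{H}_0(\pi;\Gamma(\Z\pi/N))=0$), which you call ``tractable'' but which the paper takes from Hambleton--Kreck \cite[Theorem~2.1]{HK} rather than recomputing from a basis; deriving these directly would be substantial additional work.
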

Remember that we use the notation
from \Cref{convention} to denote the equivalence classes of the elements
$2 \cdot (N_x \otimes N_x), n \cdot (N_x \otimes N_x) + 2 \cdot (\sigma \otimes \sigma)
\in \wh H_{-1}(\pi;\Gamma(\Z \pi/N))$
that live in the cokernel of the norm map.

We begin by noting that we have the following long exact sequence on Tate homology:
\[
	\ldots \to
	\widehat{H}_0(\pi;(\Z\pi/N) \otimes_\Z (I,2)) \xrightarrow[]{\widehat{f}}
	\widehat{H}_0(\pi;D) \xrightarrow[]{\widehat{j_*}}
	\widehat{H}_0(\pi;\Gamma((I,2)))
	\to \ldots
\]

\begin{lemma}
	\label{lemma:dim-shift}
	For every finite group $G$ of even order, there is an isomorphism of abelian groups
	\[ 
		\widehat{H}_0(G;(\Z G/N)\otimes_{\Z} (I,2))
		\cong
		\Z/2 \, \langle 1 \otimes N \rangle.
	\]
\end{lemma}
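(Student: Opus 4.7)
The plan is to compute this Tate group by direct reduction to a concrete quotient of $(I,2)$. First I would invoke \Cref{lemma:torsion-gamma-equals-tate}: since both $\Z G/N$ and $(I,2)$ are $\Z G$-lattices, so is their $\Z$-tensor product under the diagonal action, and therefore
\[ \widehat{H}_0(G;(\Z G/N)\otimes_{\Z}(I,2)) \cong \Tors\bigl(((\Z G/N)\otimes_{\Z}(I,2))_G\bigr). \]
For any pair of $\Z G$-lattices equipped with the diagonal $G$-action, the coinvariants are canonically identified with the balanced tensor product (the diagonal relation $(gb)\otimes(gc)\sim b\otimes c$ becomes the Z-bilinear balancing condition once $\Z G/N$ is viewed as a right $\Z G$-module), yielding
\[ ((\Z G/N)\otimes_{\Z}(I,2))_G \cong (\Z G/N)\otimes_{\Z G}(I,2). \]

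Next, to compute the right-hand side, I would apply $-\otimes_{\Z G}(I,2)$ to the short exact sequence of right $\Z G$-modules
\[ 0 \to \Z \xrightarrow{1 \mapsto N} \Z G \to \Z G/N \to 0. \]
Right exactness, together with the identity $\Z\otimes_{\Z G}(I,2)=(I,2)_G$, gives the presentation
\[ (\Z G/N)\otimes_{\Z G}(I,2) \cong (I,2)\big/\,N\!\cdot\!(I,2). \]
The key observation is that $Ng=N$ in $\Z G$, so $Na = \varepsilon(a)\,N$ for every $a \in \Z G$; since $\varepsilon((I,2)) = 2\Z$, this simplifies to $N\!\cdot\!(I,2) = 2\Z\!\cdot\!N$.

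The last step is to compute the torsion of $(I,2)/2\Z N$. The hypothesis that $|G|$ is even is used exactly here: it ensures $\varepsilon(N)=|G|\in 2\Z$, so $N \in (I,2)$. Moreover $N=\sum_{g\in G}g$ is a primitive vector in the free abelian group $(I,2)$ (its coefficient of $1 \in G$ equals $1$), so it extends to a $\Z$-basis and
\[ (I,2)\big/\,2\Z N \cong \Z/2 \oplus \Z^{|G|-1}, \]
whose torsion subgroup is $\Z/2$ generated by the class of $N$. Unwinding the chain of identifications, this class corresponds precisely to $1\otimes N \in (\Z G/N)\otimes_{\Z}(I,2)$, yielding the claimed description of the generator.

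The only mildly delicate point is the identification of the diagonal coinvariants with the balanced tensor product $\Z G/N \otimes_{\Z G}(I,2)$; the remaining steps are routine bookkeeping with the norm element and the augmentation. One could alternatively argue by dimension-shifting, tensoring $0\to\Z\xrightarrow{N}\Z G\to\Z G/N\to 0$ with $(I,2)$ over $\Z$ (since $\Z G\otimes_\Z(I,2)$ with diagonal action is induced and so Tate-acyclic) to obtain $\widehat{H}_0(G;(\Z G/N)\otimes_{\Z}(I,2))\cong\widehat{H}_{-1}(G;(I,2))\cong\widehat{H}_0(G;\Z/2)=\Z/2$, and then track the generator; but the direct calculation above seems more transparent.
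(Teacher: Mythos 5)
Your argument is correct, but it takes a genuinely different route from the paper. The paper performs two dimension-shifts: first along $0 \to (I,2) \to \Z G \to \Z/2 \to 0$ to identify $\wh H_{-1}(G;(I,2)) \cong \wh H_0(G;\Z/2) \cong \Z/2$ and locate the nontrivial class as $N$, then along $0 \to \Z \otimes_\Z (I,2) \to \Z G \otimes_\Z (I,2) \to (\Z G/N)\otimes_\Z(I,2) \to 0$ (using that the middle term is a free $\Z G$-module by \cite[Lemma~4.3]{KPR20}) to shift $\wh H_0$ of the target to $\wh H_{-1}(G;(I,2))$; this is precisely the ``alternative'' you sketch at the end. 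Your main proof instead uses \Cref{lemma:torsion-gamma-equals-tate} to reduce to computing $\Tors$ of the coinvariants, rewrites the diagonal coinvariants as the balanced tensor product $(\Z G/N)\otimes_{\Z G}(I,2)$, and computes this directly via right-exactness and the identity $Na=\varepsilon(a)N$, obtaining $(I,2)/2\Z N$; primitivity of $N$ in $(I,2)$ (valid since $|G|$ is even puts $N$ in $(I,2)$ with unit coefficient on each $g-1$) then gives the torsion. Your approach has the merit of avoiding the Tate-acyclicity input for $\Z G\otimes_\Z(I,2)$ and of exhibiting the generator with essentially no diagram chase; the one point you should make explicit is that the right $\Z G$-module structure arising from the coinvariants identification ($a\cdot g = g^{-1}a$) is related to the quotient of the right regular structure by the involution $g \mapsto g^{-1}$, which is harmless here since $\overline{N}=N$, but is worth a sentence so that the short exact sequence $0\to\Z\to\Z G\to\Z G/N\to 0$ you tensor with really is a sequence of the relevant right modules.
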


\begin{proof}
	First consider the short exact sequence
	$0\to (I,2)\to \Z G\to \Z/2\to 0$.
	Since the order of $G$ is even, the norm map is trivial on $\Z/2$
	and hence $1\in \Z/2$ is non-trivial in $\wh H_0(G;\Z/2)$.
	In particular, $\wh H_0(G;\Z/2)\cong \Z/2$.
	By dimension shifting, i.e.\ using that the Tate homology of $\Z G$ vanishes,
	we get $\wh H_0(G;\Z/2)\cong \wh H_{-1}(G;(I,2))$.
	The pre-image $1\in\Z G$ maps to $N\in\Z G$ under the norm map
	and hence $N \in \wh H_{-1}(G;(I,2))$ is the non-trivial element.

	Now consider the short exact sequence
	\begin{equation}\label{seq:I2} 0 \to
	\Z \otimes_{\Z} (I,2) 
	\xrightarrow[]{N \otimes 1}
	\Z G \otimes_{\Z} (I,2)
	\xrightarrow[]{1 \otimes 1} (\Z G/ N) \otimes_{\Z} (I,2) \to 0\end{equation}
	where the middle term is free by \cite[Lemma 4.3]{KPR20}. 
	By dimension shifting, we have
	\[
		\wh H_0(G;(\Z G/N) \otimes_{\Z} (I,2))\cong \wh H_{-1}(G;(I,2))\cong \Z/2.
	\]
	Since $N$ is a fixed point under the $G$ action,
	the element $1\otimes N \in (\Z G/N) \otimes_\Z (I,2)$ maps to $0=N\otimes N$
	under the norm map. Hence it represents an element of $\wh H_0(G; (\Z G/N) \otimes_\Z (I,2))$.
	Under the boundary map induced from the sequence \eqref{seq:I2} it is mapped to
	$1\otimes N \in \wh H_{-1}(G;\Z\otimes_\Z (I,2))$ which is the non-trivial element
	by the previous calculation.
	This implies that $1\otimes N$  
	represents the non-trivial element in $\wh H_0(G;(\Z G/N) \otimes_\Z (I,2))$.
\end{proof}

\begin{lemma}
	\label{lemma:f_*=0}
	The map
	$\widehat{H}_0(\pi;(\Z \pi/N)\otimes_{\Z} (I,2))
	\xrightarrow{\widehat{f}}
	\wh H_0(\pi;D)$
	is trivial, i.e.\ $\widehat{f}(1 \otimes N) = 0$.
\end{lemma}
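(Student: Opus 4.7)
The plan is to compute $f(1 \otimes N)$ using a $\pi$-invariant representative of the lift of $N$ in $\ker(d_2)$, which exists precisely because $n$ is even. Define $\tilde N' := (N, -(n/2) N, N) \in \Z\pi^3$; each coordinate is a $\Z$-multiple of the central element $N$, so $\tilde N'$ is $\pi$-invariant. A direct check using the identities $N \cdot N_x = nN$, $N \cdot (1 + xy) = N(1+y) = 2N$, and $N(x - 1) = 0$ shows $\tilde N' \in \ker(d_2)$ and $j(\tilde N') = N$. Moreover, comparing with the lift $\tilde N = (N, -nN_x, N)$ used in the definition of $f$, one verifies $\tilde N' - \tilde N = i((n/2) N_x)$; since $\alpha \odot i(r) \in \Gamma(A)$ for any $r \in \Z\pi/N$, replacing $\tilde N$ by $\tilde N'$ does not affect the class in $D$, and $f(1 \otimes N) \equiv \alpha \odot \tilde N' \pmod{\Gamma(A)}$.

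The $\pi$-invariance of $\tilde N'$ turns $\odot \tilde N'$ into an equivariant operation: for any $h \in \ker(d_2)$,
\[
(g-1)(h \odot \tilde N') = g h \odot g \tilde N' - h \odot \tilde N' = ((g-1) h) \odot \tilde N'.
\]
Writing $\tilde N' = N \cdot v$ for $v := (1, -(n/2), 1) \in \Z\pi^3$, and using the crucial identity $N \alpha = 0$ (which holds because $\alpha = i([1])$ and $N \cdot [1] = 0$ in $\Z\pi/N$), one computes
\[
\alpha \odot \tilde N' = \sum_{g \in \pi} \alpha \odot g v = \sum_{g \in \pi} g \bigl((g^{-1} \alpha) \odot v\bigr) \equiv \sum_g (g^{-1} \alpha) \odot v = (N \alpha) \odot v = 0
\]
modulo $(\pi - 1)\Gamma(\Z\pi^3)$. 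This realises $\alpha \odot \tilde N'$ as a $\pi$-boundary in the ambient module $\Gamma(\Z\pi^3)$.

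The main obstacle is promoting this boundary relation from $\Gamma(\Z\pi^3)$ to $\Gamma(\ker(d_2))$ modulo $\Gamma(A)$, since the cochains $g^{-1}\alpha \odot v$ do not individually lie in $\Gamma(\ker(d_2))$ (because $v \notin \ker(d_2)$). I would handle this by correcting each cochain by a term $g^{-1}\alpha \odot u_g$ with $u_g \in \Z\pi^3$ chosen so that $v + u_g$ lies in $\ker(d_2)$ and so that the cumulative correction $\sum_g g(g^{-1}\alpha \odot u_g) = \sum_g \alpha \odot g u_g$ assembles into an element of $\Gamma(A)$ modulo $(\pi - 1) \Gamma(\ker d_2)$. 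The key leverage is that $\alpha = i([1]) \in i(\Z\pi/N)$, so any term of the form $\alpha \odot i(a)$ with $a \in \Z\pi/N$ automatically lies in $\Gamma(A)$ and vanishes in $D$; the bookkeeping of the errors arising from the failure of $v$ to lie in $\ker(d_2)$ must therefore be shown to collect into a sum of such $\alpha \odot i(a)$ terms. Carrying this out yields $\alpha \odot \tilde N' \in (\pi - 1) \Gamma(\ker(d_2)) + \Gamma(A)$, equivalently $\hat f(1 \otimes N) = 0$ in $\wh H_0(\pi; D)$.
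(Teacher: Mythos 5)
Your approach diverges substantially from the paper's, and while the opening observations are correct, the argument as written has a genuine gap that I do not think is filled by the sketch you give.

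What is correct: the lift $\tilde N' = (N,-\tfrac{n}{2}N,N)$ lies in $\Ker(d_2)$ and differs from $(N,-nN_x,N)$ by $i(\tfrac{n}{2}N_x) \in A$, so it represents the same class in $D$; the element $\alpha = (x-1,1-xy,0) = i(1)$ satisfies $N\alpha = 0$ in $\Ker(d_2)$; and your congruence $\alpha\odot\tilde N' = \sum_g \alpha\odot gv = \sum_g g(g^{-1}\alpha\odot v) \equiv (N\alpha)\odot v = 0$ is valid \emph{modulo} $(\pi-1)\Gamma(\Z\pi^3)$. The problem is that this relation lives in the wrong module. The inclusion $\Ker(d_2)\hookrightarrow\Z\pi^3$ induces a map $\Z\otimes_{\Z\pi}\Gamma(\Ker(d_2)) \to \Z\otimes_{\Z\pi}\Gamma(\Z\pi^3)$ which need not be injective (tensoring with $\Z$ over $\Z\pi$ is only right exact), and in any case there is no natural map from $D = \Gamma(\Ker(d_2))/\Gamma(A)$ to anything built from $\Gamma(\Z\pi^3)$, since $\Gamma(A)$ maps injectively into $\Gamma(\Z\pi^3)$. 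So vanishing of $\alpha\odot\tilde N'$ in $\Z\otimes_{\Z\pi}\Gamma(\Z\pi^3)$ says nothing about its class in $\widehat H_0(\pi;D)$.

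The correction step you outline is the entire remaining content of the proof, and the sketch gives no reason to believe the bookkeeping closes up. Notice, in particular, that a constant $u_g = u$ cannot work: you need $v+u\in\Ker(d_2)$, which forces the third coordinate of $v+u$ into $(I,2)$, hence $\varepsilon(u_3)$ odd. Running your calculation then leaves a residual term $-\alpha\odot Nu$ with $\varepsilon((Nu)_3) = n\,\varepsilon(u_3) \ne 0$, so $Nu \notin i(\Z\pi/N) = A$ (whose third coordinate is always zero), and the term does \emph{not} land in $\Gamma(A)$. So $u_g$ genuinely must vary with $g$, and the claim that the $g$-dependent corrections ``assemble into a sum of $\alpha\odot i(a)$ terms'' is precisely the thing that needs proof; asserting it is circular.

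For contrast, the paper stays inside $\Gamma(\Ker(d_2))$ throughout. It writes $\tilde N' = \mu_1 v_1 = \mu_2 v_2 = \mu_3 v_3$ with each $\mu_i$ (namely $N_x$, $(1+xy)\sum x^{2i}$, $(1+y)\sum x^{2i}$) involution-invariant and each $v_i\in\Ker(d_2)$, and simultaneously decomposes $\alpha = a_1 + a_2 + a_3$ so that $\mu_i a_i = 0$. Then in $\Ker(d_2)\otimes_{\Z\pi}\Ker(d_2)$ one moves $\mu_i$ across the tensor by $a\otimes\lambda b = \bar\lambda a\otimes b$, killing each term $a_i\otimes\mu_i v_i$; no excursion into $\Gamma(\Z\pi^3)$ is needed. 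Your idea of exploiting $N\alpha = 0$ is related in spirit (the $\mu_i$ are ``partial norms''), but the paper's three simultaneous decompositions are exactly the device that lets the computation stay inside $\Ker(d_2)$. If you want to salvage your route, you would essentially have to reinvent that device; as written the argument is incomplete.
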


\begin{proof}
	A lift of $N\in (I,2)$ in $\ker d_2$ is given by $(N,-\frac{n}{2}N,N)$.
	Hence we have
	\[ 
		f(1\otimes N) = (x-1,1-xy,0) \odot (N,-\frac{n}{2}N,N).
	\]
	It is straightforward to verify that there are the 
	following three ways to decompose
	\[ 
		(N,-\frac{n}{2} N, N) 
		= N_x v_1
		= (1+xy)\sum_{i=1}^{n/2} x^{2i}v_2
		=  (1+y)\sum_{i=1}^{n/2} x^{2i}v_3,
	\] 
	where
	\begin{align*}
		v_1 &= (x+y-\frac{n}{2} (x-1), -yN_x-\frac{n}{2} (1-xy), x+y) \\
		v_2 &= (1+y,-N_x,1+y) \\
		v_3 &= (1+yx,-N_x,1+yx)
	\end{align*}
	are all in $\Ker(d_2)$.
	Note that we can also decompose the other factor in $\Ker(d_2)$ as
	\[
		(x-1,1-xy,0) = (x-1,0,x-1) + (0,1-xy,1-xy) + x(0,0,y-1).
	\]
	
	In the situation where $\Z \pi$ acts diagonally on a tensor
	product $L \otimes_{\Z} L$ of $\Z \pi$-modules,
	in the tensored down module
	$\Z \otimes_{\Z \pi} (L \otimes_{\Z} L) \cong L \otimes_{\Z \pi} L$ the
	relation $a \otimes (\lambda b) = (\overline{\lambda} a) \otimes b$
	holds, where $\lambda \in \Z \pi$ and $a,b \in L$.
	The elements $N_x,1+xy$ and $1+y$ are invariant under
	applying the involution $\overline{\phantom{x}}$,
	so we use this in $D \otimes_{\Z \pi} D$ to move them freely
	between the factors in the tensor products in the first equalities below.
	We have in 
	$\wh H_{0}(\pi; D) \cong \Tors \Z \otimes_{\Z\pi} ( \Gamma(\Ker(d_2)) / \Gamma(\Z \pi/N) )$
	that the following tensors all vanish: 
	\begin{align*} 
		(x-1,0,x-1) \otimes N_xv_1 & = N_x(x-1,0,x-1) \otimes v_1= 0 \\
	 	(0,1-xy,1-xy) \otimes  (1+xy)\sum x^{2i}v_2 & = (1+xy)(0,1-xy,1-xy) \otimes \sum x^{2i} v_2 = 0 \\
	 	x(0,0,y-1) \otimes  (1+y)\sum x^{2i}v_3
	 	&= (0,0,y-1) \otimes  x^{-1} (1+y) \sum x^{2i}v_3 \\
	 	&= (0,0,y-1) \otimes  (1+y) \sum x^{2i+1}v_3 \\
	 	&= (1+y)(0,0,y-1) \otimes  \sum x^{2i+1}v_3 = 0.
 	\end{align*}
	By adding together these expressions, we get
	$(x-1,1-xy,0) \odot (N,-\frac{n}{2} N, N) = 0$
	in $\wh H_{0}(\pi; D)$.
\end{proof}

We will now compute $\widehat{H}_0(\pi;\Gamma((I,2)))$. 
First note that there is an exact sequence
\[
	0 \to I \hookrightarrow (I,2) \xrightarrow{\varepsilon} 2\Z \to 0.
\]

\begin{lemma}
	\label{lem:gammaquotient}
	There is an isomorphism of $\Z \pi$-modules 
	\[
		\varphi \colon (I,2) \to { \Gamma((I,2)) }/{ \Gamma(I) }
	\]
	given by $2 \mapsto 2 \otimes 2$ and
	$g-1  \mapsto 2 \odot (g-1)$ for $g \in \pi$.
\end{lemma}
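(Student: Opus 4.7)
The plan is to apply \Cref{lem:bauer} to the short exact sequence of $\Z\pi$-lattices
\[ 0 \to I \hookrightarrow (I,2) \xrightarrow{\varepsilon} 2\Z \to 0, \]
where $2\Z$ carries the trivial $\pi$-action, since it is a $\Z\pi$-submodule of $\Z\pi/I \cong \Z$. This produces an exact sequence
\[ 0 \to I \otimes_\Z 2\Z \xrightarrow{f} \Gamma((I,2))/\Gamma(I) \to \Gamma(2\Z) \to 0, \]
with $f(\alpha \otimes 2) = [\alpha \odot 2]$ (using $2 \in (I,2)$ as a lift of $2 \in 2\Z$). Since $2\Z$ is infinite cyclic on $\{2\}$, we have $\Gamma(2\Z) \cong \Z$ generated by $2 \otimes 2$, and the canonical $\Z\pi$-equivariant identification $I \otimes_\Z 2\Z \cong I$ sending $\alpha \otimes 2 \mapsto \alpha$ rewrites $f$ as $\alpha \mapsto 2 \odot \alpha$.

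Next, I would define $\varphi$ on the $\Z$-basis $\{2\} \cup \{g-1 : 1 \neq g \in \pi\}$ of $(I,2)$ by the formulas in the statement and check $\Z\pi$-linearity. The key observation that makes both required identities immediate is that every symmetric tensor with both factors in $I$ vanishes in $D' := \Gamma((I,2))/\Gamma(I)$. Expanding for any $h \in \pi$ gives
\[ h \cdot (2 \otimes 2) = (2 + 2(h-1))^{\otimes 2} \equiv 2 \otimes 2 + 2 \cdot (2 \odot (h-1)) \pmod{\Gamma(I)}, \]
which matches $\varphi(2h) = \varphi(2) + 2 \varphi(h-1)$; and
\[ h \cdot (2 \odot (g-1)) = 2h \odot (hg - h) \equiv 2 \odot (hg-1) - 2 \odot (h-1) \pmod{\Gamma(I)}, \]
which matches $\varphi(hg - h) = \varphi((hg-1) - (h-1))$.

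Once $\Z\pi$-linearity is established, bijectivity follows essentially for free from the Bauer exact sequence. The abelian-group splitting $(I,2) \cong I \oplus \Z \cdot 2$ induced by $\varepsilon$ matches the splitting of $D'$ produced by the Bauer sequence: the restriction $\varphi|_I$ coincides with the injection $f \colon I \to D'$ under the identification $I \cong I \otimes_\Z 2\Z$, and $\varphi(2) = 2 \otimes 2$ projects to a generator of $\Gamma(2\Z) \cong \Z$ in the quotient $D'/f(I)$. Hence $\varphi$ is a bijection of abelian groups, and therefore a $\Z\pi$-module isomorphism. The only mild obstacle is the bookkeeping in the $\Z\pi$-linearity verification; Bauer's lemma supplies the structure of $D'$ with no further work.
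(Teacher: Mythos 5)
Your argument is correct, and the bijectivity step takes a genuinely different route from the paper. The paper proves that $\varphi$ is a $\Z$-module bijection directly: it uses \Cref{lem:gammafree} to write down explicit $\Z$-bases for $\Gamma((I,2))$ and $\Gamma(I)$, reads off a $\Z$-basis $\{[2\otimes 2],\ [2\odot(g-1)] : g\neq 1\}$ for the quotient, and observes that $\varphi$ carries the standard $\Z$-basis of $(I,2)$ bijectively onto it; then it verifies $\Z\pi$-linearity. You instead apply \Cref{lem:bauer} to $0\to I\to (I,2)\xrightarrow{\varepsilon}2\Z\to 0$ to get $0\to I\xrightarrow{f}\Gamma((I,2))/\Gamma(I)\to \Gamma(2\Z)\to 0$ with $f(\alpha)=[2\odot\alpha]$, and then check that $\varphi$, together with the identity on $I$ and the obvious isomorphism $2\Z\to\Gamma(2\Z)$, gives a commuting ladder between the two short exact sequences; the five lemma (or your direct splitting argument, which is valid since $\Gamma(2\Z)\cong\Z$ is free) gives bijectivity. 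Your $\Z\pi$-linearity check is the same computation as the paper's, just written multiplicatively rather than as a difference of terms. The trade-off: the paper's route is more elementary and self-contained (basis bookkeeping only), while yours reuses Bauer's lemma — already central to the paper — and avoids spelling out the bases of the $\Gamma$-groups, which is arguably cleaner. One caveat worth keeping in mind: the quotient $D'$ is a priori only a $\Z\pi$-module, not obviously a lattice, so checking both injectivity and surjectivity (rather than just counting ranks) is indeed necessary, as you do.
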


\begin{proof}
	A $\Z$ basis of $(I,2)$ is given by $2$ and all $g-1$ for $g\in\pi\setminus \{1\}$.
	By \cref{lem:gammafree} a $\Z$ basis for $\Gamma((I,2))$ is thus given by 
	\[
		\{ (g-1) \otimes (g-1), \, 2 \otimes 2, \, 2 \odot  (g-1), \, (g-1) \odot (h-1)
		\mid
		g, h \in \pi \setminus \{ 1\}, g\ne h \}.
	\]
	Doing the same consideration for $I$, we see that a $\Z$ basis for $\Gamma((I,2))/\Gamma(I)$
	is given by
	\[
		\{ [2 \otimes 2], \, [2 \odot (g-1)]
		\mid
		g \in \pi \setminus \{1 \} \}.
	\]
	Thus the map $\varphi$ is a bijection of $\Z$-modules.
	
	It remains to show that $\varphi$ is $\Z\pi$-linear. Let $g\in\pi$ be given. Then
	\begin{align*}
		(g-1)\cdot(2 \otimes 2) - 2(2 \odot (g-1))
		& = g \cdot (2 \otimes 2) - 2 \otimes 2 - (2 \otimes 2(g-1) + 2(g-1) \otimes 2) \\
		& = (2g \otimes 2g) - 2 \otimes 2 - 2 \otimes 2g + 2 \otimes 2 - 2g \otimes 2 + 2 \otimes 2 \\
		& = (2g - 2) \otimes (2g - 2)
		= 4(g-1) \otimes (g-1) \in \Gamma(I)
	\end{align*}
	and so $[(g-1)(2 \otimes 2)] = [2( 2 \odot (g-1))] \in \Gamma((I,2))/\Gamma(I)$. Hence 
	\[\varphi(2g)=\varphi(2(g-1))+\varphi(2)=[2(2 \odot (g-1))]+[2\otimes 2]=[(g-1)(2 \otimes 2)]+[2\otimes 2]=g[2\otimes 2]=g\varphi(2).\]
	Similarly, for $g,h\in\pi$ we have $2(g-1)\odot g(h-1)\in \Gamma(I)$ and hence \[[2\odot g(h-1)]=[2g\odot g(h-1)]=g[2\odot (h-1)]=g\varphi(h-1).\] Thus
	\[\varphi(g(h-1))=\varphi(gh-1)-\varphi(g-1)=[2\odot gh-1]-[2\odot g-1]=[2\odot g(h-1)]=g\varphi(h-1).\qedhere\]
\end{proof}

\begin{lemma}
	\label{lem:H0I2}
	For every finite group $G$ there is an isomorphism of abelian groups
	\[
		G^{\textup{ab}}\otimes_\Z \Z/2\cong \wh H_0(G;(I,2))
	\]
	sending $g\otimes 1$ to $g-1$.
\end{lemma}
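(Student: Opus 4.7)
The plan is to apply dimension shifting to the short exact sequence
\[
0 \to (I,2) \to \Z G \xrightarrow{\bar{\varepsilon}} \Z/2 \to 0
\]
where $\bar{\varepsilon}$ is the augmentation composed with reduction mod $2$ and $\Z/2$ carries the trivial $G$-action.

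By \Cref{lem:free-implies-tate-zero} we have $\wh H_n(G;\Z G)=0$ for all $n$, so the long exact sequence in Tate homology (\Cref{lem:les-in-tate-homology}) collapses to a connecting isomorphism
\[
\partial \colon \wh H_1(G;\Z/2) \xrightarrow{\;\sim\;} \wh H_0(G;(I,2)).
\]
By definition $\wh H_1(G;\Z/2)=H_1(G;\Z/2)$, and by the universal coefficient theorem this equals $H_1(G;\Z)\otimes_\Z \Z/2 \oplus \Tor_1^\Z(H_0(G;\Z),\Z/2) = G^{\textup{ab}}\otimes_\Z \Z/2$, since $H_0(G;\Z)=\Z$ is torsion-free and $H_1(G;\Z)=G^{\textup{ab}}$.

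It remains to verify the explicit formula on generators. Choose a free $\Z G$-resolution $F_\bullet \to \Z$ with $F_0=\Z G$ and, for each $g\in G$, a generator $[g]\in F_1$ with $d[g]=g-1\in F_0$ (for instance the bar resolution). Under the identification $H_1(G;\Z/2)\cong G^{\textup{ab}}\otimes \Z/2$, the element $g\otimes 1$ is represented by the $1$-cycle $[g]\otimes 1\in F_1\otimes_{\Z G}\Z/2$. To compute $\partial$, lift $[g]\otimes 1$ to $[g]\otimes 1\in F_1\otimes_{\Z G}\Z G = F_1$, apply the differential to obtain $g-1\in F_0=\Z G$, and observe that this element lies in the submodule $(I,2)\subseteq \Z G$. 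Under $F_0\otimes_{\Z G}(I,2) = (I,2)$, its class in $(I,2)_G$ is the element $g-1$, which lies in $\wh H_0(G;(I,2))=\ker N$ because $N(g-1)=Ng-N=0$.

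The main (and only nontrivial) obstacle is the last step, namely identifying the connecting map with the formula $g\otimes 1\mapsto g-1$ explicitly; this is a standard bar-resolution chase but must be done carefully since the convention identifies $\wh H_0(G;(I,2))$ with a subgroup of $(I,2)_G$. An alternative route avoiding the chase is to define a map $G\to \wh H_0(G;(I,2))$ by $g\mapsto g-1$ directly (well-defined since $[gh-1]=[g-1]+[g(h-1)]=[g-1]+[h-1]$ in $(I,2)_G$ and $N(g-1)=0$), observe that it factors through $G^{\textup{ab}}$ as the target is abelian, and then use the isomorphism $\wh H_0(G;(I,2))\cong G^{\textup{ab}}\otimes\Z/2$ coming from dimension shifting to conclude that the induced map $G^{\textup{ab}}\otimes \Z/2\to \wh H_0(G;(I,2))$ is an isomorphism.
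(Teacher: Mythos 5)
Your main argument matches the paper's proof: both apply the Tate long exact sequence to $0\to (I,2)\to \Z G\to \Z/2\to 0$, use $\wh H_*(G;\Z G)=0$ to obtain the connecting isomorphism $\wh H_1(G;\Z/2)\cong\wh H_0(G;(I,2))$, identify $\wh H_1(G;\Z/2)\cong G^{\textup{ab}}\otimes\Z/2$, and trace the boundary map on a generator $g\otimes 1$ to land on $g-1$. One small caveat: the ``alternative route'' you sketch at the end has a gap --- the abstract isomorphism $\wh H_0(G;(I,2))\cong G^{\textup{ab}}\otimes\Z/2$ by itself does not imply that the particular map $G^{\textup{ab}}\otimes\Z/2\to\wh H_0(G;(I,2))$ you built is an isomorphism (a homomorphism between abstractly isomorphic finite abelian groups need not be one), so you would still have to compare it with the connecting map; but this does not affect the validity of your main argument.
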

\begin{proof}
	Consider the exact sequence
	\[ 
		0 \to (I,2) \hookrightarrow \Z G \xrightarrow[]{\varepsilon} \Z/2 \to 0.
	\]
	The boundary map $\wh H_1(G;\Z/2)\xrightarrow{\partial} \wh H_0(G;(I,2))$ is an isomorphism since $\wh H_*(G;\Z G)=0$. Thus
	\[\wh H_0(G;(I,2))\cong \wh H_1(G;\Z/2)\cong H_1(G;\Z/2)\cong G^{\mathrm{ab}}\otimes_\Z \Z/2.\]
	We now compute the boundary map explicitly,
	adopting \Cref{convention} for the notation in the diagram:
	\[
	\begin{tikzcd}[column sep = 1cm]
		0 \ar[r] & C_1 \otimes_{\Z G} (I,2) \ar[d,"d_1 \otimes \id"] \ar[r] & 
		C_1 \otimes_{\Z G} \Z G \ar[d,"d_1 \otimes \id"] \ar[r,"\id \otimes \varepsilon"] & 
		C_1 \otimes_{\Z G} \Z/2 \ar[d,"d_1 \otimes \id"] \ar[r] & 0 \\
		0 \ar[r] & C_{0} \otimes_{\Z G} (I,2) \ar[r] & 
		C_0 \otimes_{\Z G} \Z G \ar[r,"\varepsilon"] & C_{0} \otimes_{\Z G} \Z/2 \ar[r] & 0
	\end{tikzcd}
	\]	
	For $g\in G$ let $c_g\in C_1$ be a preimage of $(g-1)\in C_0\cong \Z G$ under $d_1$.
	Then $c_g\otimes 1\in C_1\otimes_{\Z G}\Z/2$ represents
	$g\otimes 1\in G^{\textup{ab}}\otimes \Z/2$.
	Under the boundary map
	\[
		G^{\mathrm{ab}}\otimes_\Z\Z/2\cong \wh H_1(G;\Z/2)\xrightarrow{\partial} \wh H_0(G;(I,2)),
	\]
	$g\otimes 1$ is send to 
	$d_1(c_g) \otimes 1 = 1 \otimes (g-1) \in C_0\otimes_{\Z G}(I,2)$.
\end{proof}

\begin{lemma}
	\label{lemma:H_0(G(I2))}
	There is an isomorphism of abelian groups
	\[ 
		\widehat{H}_0(\pi;\Gamma((I,2))) 
		\cong 
		\Z/2 \, \langle \alpha_{xy} \rangle \oplus \Z/2 \, \langle \alpha_y \rangle
	\]
	where for $g \in \pi$ which satisfy $g^2 =1$ we introduce the notation
	\[
		\alpha_g = 2 \odot (g-1) + 2 (g-1) \otimes (g-1) \in \widehat{H}_0(\pi;\Gamma((I,2))).
	\]
\end{lemma}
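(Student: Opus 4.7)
My plan is to use the short exact sequence of $\Z \pi$-modules
\[
  0 \to \Gamma(I) \to \Gamma((I,2)) \xrightarrow{\varphi^{-1}_*} (I,2) \to 0
\]
coming from \Cref{lem:gammaquotient}, and then examine the induced long exact sequence in Tate homology in the relevant range:
\[
  \wh{H}_0(\pi;\Gamma(I)) \xrightarrow{\iota_*} \wh{H}_0(\pi;\Gamma((I,2))) \xrightarrow{\varphi^{-1}_*} \wh{H}_0(\pi;(I,2)) \xrightarrow{\partial} \wh{H}_{-1}(\pi;\Gamma(I)).
\]
By \Cref{lem:H0I2} we have $\wh{H}_0(\pi;(I,2)) \cong \pi^{\textup{ab}} \otimes \Z/2$; for $\pi = D_{2n}$ with $n$ even the abelianisation $D_{2n}^{\textup{ab}} \cong (\Z/2)^2$ is generated by any two of $x$, $y$, $xy$, so $\wh{H}_0(\pi;(I,2))$ has basis $\{y\otimes 1, xy \otimes 1\}$.

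First I would verify that for $g \in \pi$ with $g^2 = 1$ the element $\alpha_g = 2 \odot (g-1) + 2(g-1)^{\otimes 2}$ defines a class in $\wh{H}_0(\pi;\Gamma((I,2)))$. Using that in the $(I,2)$-basis the action is $g \cdot 2 = 2g = 2 + 2(g-1)$ and $g \cdot (g-1) = -(g-1)$, a short calculation yields $g \cdot (2 \odot (g-1)) = -(2 \odot (g-1)) - 4(g-1)^{\otimes 2}$ and $g \cdot (g-1)^{\otimes 2} = (g-1)^{\otimes 2}$, giving $g \cdot \alpha_g = -\alpha_g$; in the coinvariants this forces $2\alpha_g = 0$, placing $\alpha_g$ in the torsion subgroup. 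Next, since $2(g-1)^{\otimes 2} \in \Gamma(I)$ is annihilated by $\varphi^{-1}_*$, the explicit formula for $\varphi$ in \Cref{lem:gammaquotient} gives $\varphi^{-1}_*(\alpha_g) = g-1 \in (I,2)$, corresponding under \Cref{lem:H0I2} to $g \otimes 1 \in \pi^{\textup{ab}} \otimes \Z/2$. Taking $g \in \{y, xy\}$ therefore hits both basis elements, so $\varphi^{-1}_*$ is surjective and the connecting map $\partial$ vanishes.

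The remaining and, I expect, principal obstacle is to show that $\iota_*$ is the zero map, which together with the above would produce the claimed isomorphism $\wh{H}_0(\pi;\Gamma((I,2))) \cong (\Z/2) \langle \alpha_{xy}\rangle \oplus (\Z/2) \langle \alpha_y\rangle$. My plan is to show directly that every $\pi$-invariant $\Z$-torsion cycle $z \in \Gamma(I)$ already lies in $I\pi \cdot \Gamma((I,2))$, so that its image $[z]$ in $\Z \otimes_{\Z \pi} \Gamma((I,2))$ vanishes. The extra basis elements $2^{\otimes 2}$ and $2 \odot (g-1)$ that $\Gamma((I,2))$ has over $\Gamma(I)$, together with the relation $(g-1) \cdot 2 = 2(g-1)$, should provide enough room to rewrite the pure tensors $(g-1)^{\otimes 2}$ and mixed tensors $(g-1) \odot (h-1)$ that span $\Gamma(I)$ as augmentation-ideal multiples of elements of $\Gamma((I,2))$. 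I anticipate that this will reduce to a finite verification on a small generating set for $\Gamma(I)$ as a $\Z \pi$-module, possibly streamlined by applying \Cref{lem:bauer} to the Bauer sequence obtained from $0 \to I \to \Z \pi \to \Z \to 0$.
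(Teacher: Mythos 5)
Your proof follows the paper's own strategy for the first two-thirds: the short exact sequence $0 \to \Gamma(I) \to \Gamma((I,2)) \to (I,2) \to 0$ from \Cref{lem:gammaquotient}, the verification that each $\alpha_g$ is $2$-torsion (the paper instead computes $(1+g)\alpha_g = 0$ in $\Gamma((I,2))$, which is the same observation), and the identification of the image of $\alpha_g$ in $(I,2)$ as $g-1$, whence $\widehat{\psi}$ is surjective by \Cref{lem:H0I2}. All of this is sound, and your note that surjectivity makes the connecting map $\partial$ vanish, so that $\widehat{H}_{-1}(\pi;\Gamma(I))$ need not be computed, is correct.

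The gap is at the step you yourself flag as the ``principal obstacle'': you have not shown $\iota_*$ is zero, only proposed a plan. The paper does not argue this by a hands-on rewriting of invariant torsion cycles into $I\pi\cdot\Gamma((I,2))$; it invokes \cite[Theorem~2.1]{HK}, which gives the much stronger statement $\widehat{H}_0(\pi;\Gamma(I)) = 0$ outright, making $\iota_*$ trivially zero. That theorem of Hambleton--Kreck is a genuine ingredient, not a routine verification: it rests on their structural decomposition of $\Gamma(\Z\pi)$ as a $\Z\pi$-module (\cite[Lemma~2.2]{HK}, which this paper reproduces as part of \Cref{lem:HKLemma2.2}) together with a dimension-shifting argument through the Bauer sequences for $0 \to I \to \Z\pi \to \Z \to 0$. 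Your proposed ``finite verification on a small generating set'' would, in effect, be re-deriving that theorem, and it is not clear it would terminate as cleanly as you anticipate: the torsion in $\Z\otimes_{\Z\pi}\Gamma(I)$ is not obviously generated by a few elements without already knowing the $\Z\pi$-module structure of $\Gamma(\Z\pi)$. You should cite \cite[Theorem~2.1]{HK} (as the paper does throughout, e.g.\ also for $\Gamma(\Z\pi/N)$) to close this step rather than attempting the direct computation.
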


\begin{remark}
	We can also write this as
	$\alpha_g = 2 \otimes (g-1) - (g-1) \otimes 2$.
	Observe that for $g$ of order 2, we have
	$g(g-1) = -(g-1)$ and $(g-1)^2 = -2(g-1)$ in $\Z \pi$.
\end{remark}

\begin{proof}
	We first show that the elements $\alpha_g$ are torsion in $\Z\otimes_{\Z \pi} \Gamma((I,2))$ and hence represent elements in $\widehat{H}_0(\pi;\Gamma((I,2)))$. Using that $g^2=1$, in $\Gamma((I,2))$ we have
	\begin{align*}
	(1+g)\alpha_g
	& = (1+g)(2\odot(g-1) + 2(g-1)\otimes(g-1)) \\
	& = 2\odot (g-1)-2g\odot(g-1) + 4(g-1)\otimes(g-1) \\
	& = -2(g-1)\odot (g-1)+ 4(g-1)\otimes(g-1) \\
	& = -4(g-1)\otimes(g-1)+ 4(g-1)\otimes(g-1) \\
	& = 0
	\end{align*}
	Hence in $\Z\otimes_{\Z \pi}\Gamma((I,2))$ the elements $\alpha_g$ are 2-torsion,
	since multiplication by $2$ and by $(1+g)$ are equivalent under the trivial action on the first factor.
		
	Now consider the short exact sequence
	\[
		0 \to \Gamma(I) \to \Gamma((I,2)) \xrightarrow[]{\psi} (I,2) \to 0,
	\]
	using the isomorphism $\Gamma((I,2))/\Gamma(I)\cong (I,2)$ from \cref{lem:gammaquotient}.
	Under this isomorphism, the elements $\alpha_g$ map to $g-1$.
	Hence by \cref{lem:H0I2}, the map
	\[
		\wh H_0(\pi;\Gamma((I,2)))\xrightarrow{\widehat{\psi}} \wh H_0(\pi;(I,2))
	\]
	is surjective. Here we use that the dihedral group $\pi$ of order $2n$ for $n$ even is generated by $xy$ and $y$ which are both 2-torsion,
	and thus generate the abelianization
	$\pi^{\textup{ab}} \cong \Z/2 \oplus \Z/2$.
	By \cite[Theorem~2.1]{HK}, we have that $\widehat{H}_0(\pi;\Gamma(I)) = 0 $ and so the map
	$\widehat{\psi} \colon \wh H_0(\pi;\Gamma((I,2))) \rightarrow \wh H_0(\pi;(I,2))$
	is an isomorphism by the long exact sequence on Tate homology. 
\end{proof}

Recall that we defined the
element $\sigma=(1+yx)\sum_{i=1}^{n/2}x^{2i}$.
We note the following properties that we will use in our calculations:
$ x \sigma = x^{-1} \sigma = y \sigma = \sigma x = \sigma x^{-1} = \sigma y $.
The following lemma concerns the images of the maps
\[
\widehat{H}_0(\pi;D) \xrightarrow[]{\widehat{j_*}}
\widehat{H}_0(\pi;\Gamma((I,2)))
\text{ and } 
\Gamma(\Z \pi/N) \xrightarrow[]{i_*}
\Gamma(\Ker(d_2)) \xrightarrow[]{q} D = \Gamma(\Ker(d_2)) / \Gamma(\Z \pi/N).
\]
\begin{lemma}
	\label{lemma:a_1_a_2}
	There exists $\alpha_1, \alpha_2 \in \Gamma(\Ker(d_2))$
	such that
	\begin{enumerate}[\normalfont (i)]
		\item the corresponding elements in the Tate group
		$\widehat{H}_0(\pi;D)$ map to
		$\widehat{j_*}(\alpha_1) = \alpha_{xy} - \alpha_{y}$
		and $\widehat{j_*}(\alpha_2) = \alpha_y$
		in $\widehat{H}_0(\pi;\Gamma((I,2)))$
		\item $N\cdot \alpha_1 = i_*(2 \cdot (N_x \otimes N_x))$
		and $N\cdot \alpha_2 = i_*(n \cdot (N_x \otimes N_x) + 2 \cdot (\sigma \otimes \sigma))$.		
	\end{enumerate}
\end{lemma}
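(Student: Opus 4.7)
The plan is constructive: using the natural lifts of $2, y-1, xy-1 \in (I,2)$ to $\Ker(d_2) \subset \Z\pi^3$ provided by \Cref{prop:kernel_exact_sequence}, set
\[
\alpha_2 := \tilde 2 \odot \widetilde{y-1} + 2\,\widetilde{y-1}^{\otimes 2}
\qquad\text{and}\qquad
\alpha_1 := \tilde 2 \odot \widetilde{xy-1} + 2\,\widetilde{xy-1}^{\otimes 2} - \alpha_2,
\]
where $\tilde 2 = (1+y, -N_x, 2)$, $\widetilde{y-1} = (0,0,y-1)$, and $\widetilde{xy-1} = (0, xy-1, xy-1)$. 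Since $j$ is the third-coordinate projection, applying $j_*$ immediately yields $j_*(\alpha_2) = 2\odot(y-1) + 2(y-1)^{\otimes 2} = \alpha_y$ and $j_*(\alpha_1) = \alpha_{xy} - \alpha_y$ in $\Gamma((I,2))$. This proves (i) modulo the fact that $q(\alpha_i) \in D$ represents a class in $\widehat H_0(\pi;D)$, which amounts exactly to the inclusion $N\alpha_i \in i_*(\Gamma(\Z\pi/N))$ asserted in (ii).

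For (ii), I would expand $N\alpha_i = \sum_{g \in \pi} g\cdot \alpha_i$ by splitting $\pi = D_{2n}$ into the two cosets $\{x^i : 0 \le i < n\}$ and $\{x^i y : 0 \le i < n\}$, and analyse each tensor component in $\Ker(d_2) \otimes_\Z \Ker(d_2) \subset \Z\pi^3 \otimes_\Z \Z\pi^3$ separately. The identities collected in the remark after \Cref{prop:kernel_exact_sequence}, together with $y N_x = N_x y$, $x N_x = N_x$, $(y-1)(y+1) = 0$, and $(1+xy)(1-xy) = 0$, should force all tensor components of $N\alpha_i$ having at least one third-coordinate entry to cancel. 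Once this happens, $N\alpha_i$ lies in the image of $i_*$, and the preimage can be extracted by inverting the formula $i(r) = (r(x-1), r(1-xy), 0)$ on each tensor factor.

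To match the preimages with $2(N_x \otimes N_x)$ and $n(N_x \otimes N_x) + 2(\sigma \otimes \sigma)$, I would use the identifications $i(N_x) = (0, N_x(1-y), 0)$, which follows from $N_x(x-1) = 0$, and $i(\sigma) = (\sigma(y-1), 0, 0)$, which follows from $\sigma x = \sigma y$ and $\sigma \cdot xy = \sigma$. Under these identifications, the $N_x \otimes N_x$ contribution arises from summing the $(-N_x)\otimes(-N_x)$ pieces produced by the second-coordinate entry of $\tilde 2$, while the $\sigma \otimes \sigma$ contribution for $\alpha_2$ arises from the $(1+y) \otimes (y-1)$-type cross terms summed over the $\{x^i\}$-coset, with the formula $\sigma = (1+yx)\sum_{i=1}^{n/2} x^{2i}$ encoding the even-power subsum that appears naturally.

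The main obstacle is executing this norm calculation cleanly: the expansion produces many tensor terms across the nine coordinate slots, and both the cancellations of the third-coordinate parts and the precise integer coefficients $2$ and $n$ require careful bookkeeping. The lifts above appear to be engineered precisely so that the answer collapses to the clean form in the statement; other choices of lifts would differ by $i_*$-correction terms that change $N\alpha_i$ by $i_*(N\cdot\text{something})$, which does not alter the class in $\widehat H_{-1}(\pi;\Gamma(\Z\pi/N))$ but would obscure the clean presentation.
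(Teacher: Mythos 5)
Your constructive plan is close to the paper's proof, and your choices $\widetilde{\alpha_y} := \tilde 2 \odot \widetilde{y-1} + 2\,\widetilde{y-1}^{\otimes 2}$ and $\widetilde{\alpha_{xy}} := \tilde 2 \odot \widetilde{xy-1} + 2\,\widetilde{xy-1}^{\otimes 2}$ are exactly the paper's starting points. However, there is a genuine gap: the paper does \emph{not} take $\alpha_2 = \widetilde{\alpha_y}$ and $\alpha_1 = \widetilde{\alpha_{xy}}-\widetilde{\alpha_y}$; it subtracts additional correction terms
\[
\beta_1 = (1-x,\,xy-1,\,0)\odot(0,\,xy-1,\,xy-1), \qquad
\beta_2 = (x-1,\,1-xy,\,0)\odot\bigl(\sigma,\,-\tfrac{n}{2}N_x,\,N_x\bigr),
\]
setting $\alpha_1 = \widetilde{\alpha_{xy}}-\widetilde{\alpha_y}-\beta_1$ and $\alpha_2 = \widetilde{\alpha_y}-\beta_2$. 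These corrections are essential, and your expectation that ``all tensor components of $N\alpha_i$ having at least one third-coordinate entry cancel'' is false for your choices. The paper's computation gives
\[
N\cdot\widetilde{\alpha_y} = (0,\,N_x(y-1),\,0)\odot(0,\,0,\,N_x(y-1)),
\]
where the second factor $(0,0,N_x(y-1))$ lies in $\Ker(d_2)$ but has a nonzero third coordinate, so it is not in $\IM(i)=\ker(j)$. Since the first factor $(0,N_x(y-1),0)=i(-N_x)\ne 0$, this element lives purely in the mixed $\IM(i)\otimes_{\Z}(\text{complement})$ part of $\Gamma(\Ker(d_2))$ and hence is \emph{not} in $i_*(\Gamma(\Z\pi/N))$. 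Consequently $q(\widetilde{\alpha_y})$ does not represent a class in $\widehat H_0(\pi;D)$ at all, so both parts of the lemma fail for your $\alpha_2$; the analogous problem occurs for your $\alpha_1$, where the leftover mixed term is $N\cdot\beta_1$.

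The closing heuristic in your write-up is also where the misconception lies: you argue that changing lifts only perturbs $N\alpha_i$ by elements of $i_*(\Gamma(\Z\pi/N))$, but this is not so. Modifying a lift $\widetilde{\lambda}\mapsto\widetilde{\lambda}+i(r)$ changes $\tilde 2\odot\widetilde{\lambda}$ by the cross term $\tilde 2\odot i(r)$, which belongs to the mixed component, not to $\IM(i_*)$. The correction terms $\beta_1,\beta_2$ are precisely of this mixed type (indeed $q(\beta_1)=f(-1\otimes(xy-1))$ and $q(\beta_2)\in\IM(f)$), and they satisfy $j_*(\beta_i)=0$, so they repair (ii) without disturbing (i). Without them, the proof does not go through, and the clean formulas in (ii) cannot be obtained.
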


\begin{proof}
	First let 
	\begin{align*} 
		\widetilde{\alpha_y} &= (1+y,-N_x,2) \odot (0,0,y-1) + 2(0,0,y-1)^{\otimes 2} \\
		\widetilde{\alpha_{xy}} &= (1+y,-N_x,2) \odot (0,xy-1,xy-1) + 2(0,xy-1,xy-1)^{\otimes 2}
	 \end{align*}
	be in $\Gamma(\Ker(d_2))$ so that the
	corresponding elements in $\widehat{H}_0(\pi;D)$ have
	\begin{align*}
		\widehat{j_*}(\widetilde{\alpha_y}) &= 2 \odot (y-1) + 2 (y-1) \otimes (y-1) = \alpha_y \\
		\widehat{j_*}(\widetilde{\alpha_{xy}}) &= 2 \odot (xy - 1) + 2 (xy-1) \otimes (xy-1) = \alpha_{xy}.
	\end{align*}
	Let us further in $\Gamma(\Ker(d_2))$ define
	\begin{align*}
		\alpha_1 & = \widetilde{\alpha_{xy}} - \widetilde{\alpha_y} - \beta_1 && \text{ where } 
		\beta_1 = (1-x,xy-1,0)\odot(0,xy-1,xy-1), \\
		\alpha_2 & = \widetilde{\alpha_y} - \beta_2 && \text{ where }
		\beta_2 = (x-1,1-xy,0)\odot(\sigma,-\frac{n}{2}N_x,N_x).
	\end{align*}
	Observe that in $D$, we have $\beta_1 = f(-1 \otimes (xy-1))$.
	It is easy to see that
	$\widehat{j_*}(\alpha_1) = \alpha_{y} - \alpha_{xy}$
	since $j_* \circ f = 0$
	and $\widehat{j_*}(\alpha_2) = \alpha_y$ as $\widehat{j_*}(\beta_2) = 0$,
	which confirms part (i) of the Lemma.

	For part (ii) we make the following computation in $\Gamma(\Ker(d_2))$:
	\begin{align*}
	N \cdot \beta_1 &=N_x(1+xy)((1-x,xy-1,0)\odot(0,xy-1,xy-1))\\
		&=N_x(((1-xy)(1+y),2(xy-1),0)\odot(0,xy-1,xy-1))\\
	N \cdot \widetilde{\alpha_{xy}} &=N_x(1+xy)((1+y,-N_x,2) \odot (0,-x^{-1}(xy-1),xy-1) + 2(0,-x^{-1}(xy-1),xy-1)^{\otimes 2})\\
		&=N_x((1-xy)(1+y),-N_x+yN_x,2(1-xy))\odot(0,xy-1,xy-1)+2N_x(0,xy-1,xy-1)^{\otimes 2}\\
		&=N\beta_1+N_x(0,-N_x+yN_x,0)\odot(0,xy-1,xy-1)\\
		&=N\beta_1+(0,(y-1)N_x,0)\odot(0,0,(y-1)N_x)+2 (0,(y-1)N_x,0)^{\otimes 2}\\
	N\cdot \beta_2&=(1+y)N_x((x-1,1-xy,0)\odot(0,-\frac{n}{2}N_x,N_x))+(1+x)\sigma((x-1,1-xy,0)\odot(\sigma,0,0))\\
		&=(1+y)((0,N_x-yN_x,0)\odot(0,-\frac{n}{2}N_x,N_x))+(1+x)((\sigma(x-1),0,0)\odot(\sigma,0,0))\\
		&=(0,(y-1)N_x,0)\odot(0,0,(y-1)N_x)-n(0,(y-1)N_x,0)^{\otimes 2} - 2(\sigma(x-1,1-yx,0))^{\otimes 2}\\
	N \cdot \widetilde{\alpha_y} &=N_x(1+y)((1+y,-N_x,2) \odot (0,0,y-1) + 2(0,0,y-1)^{\otimes 2})\\
		&=N_x((0,-(1-y)N_x,2(1-y))\odot(0,0,y-1)+4(0,0,y-1)^{\otimes 2})\\
		&=N_x((0,-(1-y)N_x,0)\odot(0,0,y-1))\\
		&=(0,N_x(y-1),0)\odot(0,0,N_x(y-1)).
	\end{align*}
	Hence we have that:
	\begin{align*}
	N \cdot \alpha_1 & = 2 (0,N_x (1-y),0)^{\otimes 2} = i_*(2 \cdot (N_x \otimes N_x)) \\
	N \cdot \alpha_2 & =  n (0,N_x (1-y),0)^{\otimes 2} + 2 (\sigma (x-1,1-xy,0))^{\otimes 2} = i_*( n \cdot (N_x \otimes N_x) + 2 \cdot (\sigma \otimes \sigma))
	\end{align*}
	since $i_*(N_x \otimes N_x) = (0,N_x (1-y),0)^{\otimes 2}$
	and $i_*(\sigma \otimes \sigma) = (\sigma(x-1,1-xy,0))^{\otimes 2}$.
\end{proof}

\begin{proof}[Proof of \Cref{prop:H_0(D)}]
	\label{proof:H_0(D)}
	Let $\alpha_1$, $\alpha_2$ be as in \Cref{lemma:a_1_a_2}.
	We will view them as elements of $\Z \otimes_{\Z \pi} D$
	using the identification $D = \Gamma(\Ker(d_2)) / \Gamma(\Z\pi/N) $.
	In \Cref{lemma:a_1_a_2} (ii), we showed that $N\cdot \alpha_1$, $N\cdot \alpha_2 \in \IM(i_*)$ which implies that $ N\cdot \alpha_1 = N \cdot \alpha_2 = 0 \in D$ and so $\alpha_1, \alpha_2 \in \widehat{H}_0(\pi;D)$.
	
	By \Cref{lemma:f_*=0}, the map
	\[
		\widehat{j_*} \colon \widehat{H}_0(\pi;D) \to \widehat{H}_0(\pi;\Gamma((I,2)))
	\]
	is injective. By \cref{lemma:H_0(G(I2))},
	$\widehat{H}_0(\pi;\Gamma((I,2)))$ is generated by $\alpha_{xy}$ and $\alpha_y$. 
	By \Cref{lemma:a_1_a_2} (i), we have that
	$\widehat{j_*}(\alpha_1) = \alpha_{xy} - \alpha_{y}$ and
	$\widehat{j_*}(\alpha_2) = \alpha_y$.
	Hence $\widehat{j_*}$ is bijective,
	$\widehat{H}_0(\pi;D) \cong \Z/2 \oplus \Z/2$ and is generated by $\alpha_1$, $\alpha_2$.	
	
	Now to compute the boundary map
	$\wh H_0(\pi;D) \xrightarrow{\partial} \wh H_{-1}(\pi;\Gamma(\Z\pi/N))$,
	consider the map of short exact sequences
	\[
	\begin{tikzcd}[column sep = 1cm]
		0 \ar[r] & \Gamma(\Z \pi/N) \ar[d,"N"] \ar[r, "i_*"] & 
		\Gamma(\Ker(d_2)) \ar[d,"N"] \ar[r,"q"] & 
	    D \ar[d,"N"] \ar[r] & 0 \\
		0 \ar[r] & \Gamma(\Z \pi/N) \ar[r, "i_*"] & 
		\Gamma(\Ker(d_2)) \ar[r,"q"] & 
		D \ar[r] & 0
	\end{tikzcd}
	\]	
	Since $N\cdot \alpha_1 = i_*(2 \cdot (N_x \otimes N_x))$
	and $N\cdot \alpha_2 = i_*(n \cdot (N_x \otimes N_x) + 2 (\sigma \otimes \sigma))$, the boundary map $\partial$
	sends $\alpha_1$ and $\alpha_2$ to $2\cdot(N_x\otimes N_x)$ and $n \cdot (N_x \otimes N_x) + 2 (\sigma \otimes \sigma)$, respectively.
\end{proof}

In order to finish the proof of Theorem \ref{thm:ker=0},
we first need the following two lemmas.

\begin{lemma}
	\label{lem:HKLemma2.2}
	Let $ G$ be a finite group and let $\psi \colon \Z  G \to \Z  G/N$ be the quotient map. For each $g \in  G$ of order two, fix a set of coset representatives $\{x_1, \cdots, x_n \}$ for $ G/\langle g \rangle$ and let $\Sigma_{ G/\langle g \rangle} = \sum_{i=1}^n x_i$. Then there is an isomorphism of abelian groups
	\[ 
		\left( \bigoplus_{g \ne 1, g^2 =1} \Z/2 \right) /(1,\cdots,1) \cong \IM(\psi_* \colon \widehat{H}_{-1}( G; \Gamma(\Z G)) \to \widehat{H}_{-1}( G; \Gamma(\Z G/N))	
	\]
	which, on the summand indexed by $g$, has the form
	$1 \mapsto \psi_*(\Sigma_{ G/\langle g \rangle} \cdot (1 \odot g))$. 
\end{lemma}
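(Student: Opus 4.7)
The plan is to analyze the $\Z G$-module structure of $\Gamma(\Z G)$ and to combine it with Bauer's lemma applied to $0 \to \Z \xrightarrow{N} \Z G \xrightarrow{\psi} \Z G / N \to 0$; the resulting long exact Tate homology sequences will identify $\IM(\psi_*)$ as a quotient of $\widehat H_{-1}(G; \Gamma(\Z G))$, which I can then read off explicitly.

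Using the $\Z$-basis of $\Gamma(\Z G)$ from \Cref{lem:gammafree}, I first decompose $\Gamma(\Z G)$ into $G$-orbits for the diagonal action. The diagonal orbit $\{g \otimes g : g \in G\}$ has trivial stabilizer, and the orbit of $1 \odot k$ (for $k \ne 1$) has stabilizer $\langle k \rangle$ when $k^2 = 1$ and trivial otherwise. Together these give a $\Z G$-module decomposition
\[
\Gamma(\Z G) \cong F \oplus \bigoplus_{g \ne 1,\, g^2 = 1} \Ind_{\langle g \rangle}^G \Z
\]
with $F$ free. Since $\widehat H_*(G; F) = 0$, Shapiro's lemma applied to each induced summand yields
\[
\widehat H_{-1}(G; \Gamma(\Z G)) \cong \bigoplus_{g^2 = 1,\, g \ne 1} \widehat H_{-1}(\langle g \rangle; \Z) \cong \bigoplus_{g^2 = 1,\, g \ne 1} \Z/2,
\]
and tracking through the Shapiro isomorphism identifies the generator of the summand indexed by $g$ with the class of the $G$-fixed orbit sum $\Sigma_{G/\langle g \rangle} \cdot (1 \odot g) = \sum_i x_i \odot x_i g$.

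Next I apply \Cref{lem:bauer} to $0 \to \Z \to \Z G \to \Z G/N \to 0$ (noting $\Gamma(\Z) \cong \Z$) to obtain short exact sequences $0 \to \Gamma(\Z) \to \Gamma(\Z G) \to D \to 0$ and $0 \to \Z G/N \to D \to \Gamma(\Z G/N) \to 0$ whose composition realizes $\psi_*$. Two cohomological vanishings then collapse the associated Tate sequences: $\widehat H_{-2}(G; \Z) = H^1(G; \Z) = 0$ since $G$ is finite, and $\widehat H_{-1}(G; \Z G/N) \cong \widehat H_{-2}(G; \Z) = 0$ by dimension shifting from $0 \to \Z \to \Z G \to \Z G/N \to 0$. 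Together these yield a surjection $\widehat H_{-1}(G; \Gamma(\Z G)) \twoheadrightarrow \widehat H_{-1}(G; D)$ with kernel the image of $\widehat H_{-1}(G; \Z)$, followed by an injection $\widehat H_{-1}(G; D) \hookrightarrow \widehat H_{-1}(G; \Gamma(\Z G/N))$, so $\IM(\psi_*)$ equals the cokernel of $\widehat H_{-1}(G; \Z) \to \widehat H_{-1}(G; \Gamma(\Z G))$.

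Finally I identify this image. The inclusion $\Z \hookrightarrow \Z G$ sends $1 \mapsto N$, so the induced map on $\Gamma$ sends a generator of $\Gamma(\Z) = \Z$ to
\[
N \otimes N = \sum_{g \in G} g^{\otimes 2} + \sum_{\{g, h\} : g \ne h} g \odot h.
\]
The diagonal term and each orbit sum with trivial stabilizer are norms in the corresponding free summands of $F$ and hence vanish in $\widehat H_{-1}$, whereas for each $k$ with $k^2 = 1$, $k \ne 1$ the orbit sum over $\{1, k\}$ is precisely $\Sigma_{G/\langle k \rangle} \cdot (1 \odot k)$, i.e.\ the generator of the corresponding $\Z/2$-summand. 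Therefore the image of $\widehat H_{-1}(G; \Z)$ in $\bigoplus_{g^2 = 1,\, g \ne 1} \Z/2$ is generated by $(1, \ldots, 1)$, which yields the claimed isomorphism sending $1$ on the summand indexed by $g$ to $\psi_*(\Sigma_{G/\langle g \rangle} \cdot (1 \odot g))$. The main obstacle is the combinatorial bookkeeping of $G$-orbits on unordered pairs and the correct identification of the Shapiro-lemma generators; given these, the remainder is a routine diagram chase.
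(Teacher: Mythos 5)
Your proposal is correct and follows essentially the same route as the paper's proof: decompose $\Gamma(\Z G)$ into $G$-orbits (a free part plus one induced module $\Ind_{\langle g \rangle}^G \Z \cong \Z G/(1-g)\Z G$ per order-two element), compute $\widehat H_{-1}(G;\Gamma(\Z G)) \cong \bigoplus_{g^2=1, g\ne1}\Z/2$, apply \Cref{lem:bauer} to $0\to\Z\to\Z G\to\Z G/N\to0$ with the two vanishing results $\widehat H_{-2}(G;\Z)=0$ and $\widehat H_{-1}(G;\Z G/N)=0$ to identify $\IM(\psi_*)$ with the cokernel of $\widehat H_{-1}(G;\Z)\to\widehat H_{-1}(G;\Gamma(\Z G))$, and then expand $N\otimes N$ into orbit sums to see that it lands on the diagonal class $(1,\dots,1)$. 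The only difference is one of exposition: where the paper delegates the decomposition of $\Gamma(\Z G)$ and the identification of the generator $\Sigma_{G/\langle g\rangle}\cdot(1\odot g)$ and the expansion of $N\otimes N$ to the citations \cite[Lemma 2.2]{HK} and \cite[p.~529]{HK-cor}, you verify them directly by the orbit/stabilizer analysis, which makes the argument self-contained but otherwise identical.
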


\begin{proof}
	Let $S$ be the set given by a representative of $g,g^{-1}$ for each $g\in  G, g^2\neq 1$. By \cite[Lemma~2.2]{HK}, we have 
	\[
		\Z G \oplus 
		\bigoplus_{S}\Z G \oplus 
		\bigoplus_{g\ne 1, g^2=1} \Z G / (1-g)\Z G
		\cong \Gamma(\Z G).
	\] 
	On the first summand the isomorphism is given by $h\mapsto h\otimes h$ and on a summand corresponding to $g\in G,g\neq 1$ the isomorphism sends $h$ to $hg\otimes h+h\otimes hg$. On $\Z G$, the norm map $\Z\to (\Z G)^ G$ is an isomorphism, on $\Z G/(1-g)\Z G$ with $g^2=1, g\neq 1$, the norm map $\Z\to (\Z G/(1-g)\Z G)^G$ is injective with cokernel $\Z/2$. The cokernel is generated by summing over some set of representatives of $ G/\langle g\rangle $. As $\wh H_{-1}( G;\Z G)=0$ and $\wh H_{-1}( G;\Z G/(1-g)\Z G)\cong \wh H_{-1}(\langle g\rangle;\Z)\cong \Z/2$, this implies that there is an isomorphism
	\[ 
		\bigoplus_{g \ne 1, g^2 =1} \Z/2 \, \, 
		\cong \bigoplus_{g \ne 1, g^2 =1} \wh H_{-1}( G;\Z G/(1-g)\Z G)
		\cong \wh H_{-1}( G;\Gamma(\Z G)).
	\]
	It can be shown (see \cite[p.~529]{HK-cor}) that, on the summand indexed by $g$, this map is given by:
	\[
		1 \mapsto \Sigma_{ G/\langle g\rangle} \mapsto \Sigma_{ G/\langle g \rangle} \cdot (1 \odot g).
	\] 
	
	Now note that there is an exact sequence $0 \to \Z \to \Z G \to \Z G/N \to 0$
	which has associated sequences from \Cref{lem:bauer}:
	\[
		0 \to \underbrace{\Gamma(\Z)}_{\cong \Z} \to \Gamma(\Z G) \to D_0 \to 0
	\]
	\[ 
		0 \to \underbrace{\Z \otimes_{\Z} (\Z G/N)}_{\cong \Z  G/N} \to D_0 \to \Gamma(\Z G/N) \to 0
	\]	
	We have that
	$\wh H_{-1}(G;\Z G/N)\cong \wh H_{-2}(G;\Z) \cong H^1( G;\Z) =0$
	and so the two long exact sequences for Tate homology can be combined
	at the $\widehat{H}_{-1}( G;D_0)$ term to give an exact sequence:
	\[
		\underbrace{\widehat{H}_{-1}( G;\Z)}_{\cong \Z/| G |} \xrightarrow[]{1 \mapsto N \otimes N}
		\widehat{H}_{-1}( G;\Gamma(\Z G)) \xrightarrow[]{\psi_*}
		\widehat{H}_{-1}( G;\Gamma(\Z G/N)).
	\]
	By exactness, we have that
	\[
		\IM(\psi_* \colon 
		\widehat{H}_{-1}( G; \Gamma(\Z G)) \to \widehat{H}_{-1}( G; \Gamma(\Z G/N))) \cong
		\widehat{H}_{-1}( G;\Gamma(\Z G)) / N \otimes N.
	\]
	Let $\{x_1(g), \cdots, x_n(g)\}$ be coset representatives for $ G/\langle g \rangle$ where $g \ne 1$, $g^2=1$. It can be shown (again, see \cite[p.~529]{HK-cor}) that: 
	\[
		N \otimes N 
		= 
		N \cdot \gamma
		+
		\sum_{g\ne 1, g^2 =1} \Sigma_{ G/\langle g \rangle} \cdot (1 \odot g)
	\]
	for some $\gamma \in \Gamma(\Z G/N)$, and so $N \otimes N$ maps to the diagonal element under the isomorphism described above.
\end{proof}

In the case where $\pi$ is the dihedral group of order $2n$ for $n$ even, the non-trivial order two elements are
\[ 
	\{ yx^i : 0 \le i < n\} \cup \{x^{n/2}\}
\]
and we can take $\Sigma_{\pi/\langle yx^i \rangle} = N_x$ for all $0 \le i < n$
and $\Sigma_{\pi / \langle x^{n/2} \rangle} = (1+y)\sum_{i=0}^{n/2-1}x^i$.

\begin{lemma} \label{lemma:LI}
	The elements $2 \cdot (N_x\otimes N_x)$, $2 \cdot (\sigma \otimes \sigma) \in \wh H_{-1}(\pi;\Gamma(\Z\pi/N))$ are linearly independent.
\end{lemma}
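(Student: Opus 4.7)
My plan is to lift both classes from $\wh H_{-1}(\pi;\Gamma(\Z\pi/N))$ to classes in $\wh H_{-1}(\pi;\Gamma(\Z\pi))$, where \cref{lem:HKLemma2.2} furnishes an explicit basis indexed by the non-trivial involutions of $\pi$, and then read off the desired linear independence from that basis.

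The key observation is that in $\Z\pi/N$ one has $yN_x = -N_x$ and $y\sigma = -\sigma$, both because $(1+y)N_x = N = (1+y)\sigma$ (the second identity follows from the fact that $\sigma$ and $y\sigma$ have disjoint supports whose union is all of $\pi$, giving $N$). This suggests the $\pi$-invariant lifts
\[
\alpha := (1+y)(N_x \otimes N_x) = N_x \otimes N_x + N_xy \otimes N_xy, \qquad \beta := (1+y)(\sigma \otimes \sigma) \in \Gamma(\Z\pi).
\]
I would first verify that $\alpha$ and $\beta$ are $\pi$-fixed (using $xN_x = N_x$ for $\alpha$, and the identities $x\sigma = y\sigma = N - \sigma$ and $xy\sigma = \sigma$ in $\Z\pi$ for $\beta$), so they define classes in $\wh H_{-1}(\pi;\Gamma(\Z\pi))$ whose images under $\psi_*$ are exactly $2(N_x \otimes N_x)$ and $2(\sigma \otimes \sigma)$ in $\Gamma(\Z\pi/N)$.

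Next I would compute the coordinates of $[\alpha]$ and $[\beta]$ in the basis $\{e_g = \Sigma_{\pi/\langle g\rangle} \cdot (1 \odot g)\}_{g^2=1, g \ne 1}$ of $\wh H_{-1}(\pi;\Gamma(\Z\pi)) \cong \bigoplus_{g^2=1, g \ne 1} \Z/2$, by extracting the $M_g$-component of each tensor. For $\alpha$: the support of each factor $N_x$ (resp. $N_x y$) lies in a single coset of $\langle x\rangle$, so pairs within each have difference a power of $x$, and the only involution in $\langle x\rangle$ is $x^{n/2}$; summing the contributions from both terms yields $[\alpha] = e_{x^{n/2}}$. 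For $\beta$: the mixed pairs $\{x^{2i}, yx^{2j+1}\}$ from $\sigma \otimes \sigma$ and $\{x^{2i+1}, yx^{2j}\}$ from $y\sigma \otimes y\sigma$ produce exactly $N_x \cdot (1 \odot yx^k) = e_{yx^k}$ for every odd $k$ (using $\Sigma_{\pi/\langle yx^k\rangle} = N_x = (1+x)\tau$, where $\tau = 1 + x^2 + \ldots + x^{n-2}$); additionally, when $n \equiv 0 \pmod 4$ the same-type pairs contribute an extra $e_{x^{n/2}}$, while for $n \equiv 2 \pmod 4$ no such pairs exist (as $n/4 \notin \Z$).

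Since $\IM(\psi_*)$ is the quotient of $\bigoplus_{g^2=1, g\ne 1}\Z/2$ by $N \otimes N = (1, \ldots, 1)$ by \cref{lem:HKLemma2.2}, in coordinates $(e_{yx^0}, \ldots, e_{yx^{n-1}}, e_{x^{n/2}})$ we have $[\alpha] = (0, \ldots, 0, 1)$ and $[\beta] = (0, 1, 0, 1, \ldots, 0, 1, \epsilon)$ with $\epsilon \in \{0, 1\}$ depending on $n$ mod $4$. In any relation $a\alpha + b\beta \equiv c(1, \ldots, 1)$, the $yx^0$-slot forces $c = 0$, an odd slot then forces $b = 0$, and finally the $x^{n/2}$-slot forces $a = 0$, establishing the linear independence in $\IM(\psi_*) \subseteq \wh H_{-1}(\pi; \Gamma(\Z\pi/N))$. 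The main obstacle will be the careful bookkeeping in the third paragraph: sorting out which unordered pairs of support elements contribute to which $M_g$-component of $\Gamma(\Z\pi)$, and handling the split on $n$ mod $4$ for the contribution of $\beta$ to the $x^{n/2}$-summand.
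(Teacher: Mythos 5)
Your proposal is correct and takes essentially the same route as the paper: lift both classes to $\pi$-invariant elements of $\Gamma(\Z\pi)$ and read off their coordinates via the involution-indexed basis of \cref{lem:HKLemma2.2}. The only cosmetic difference is your choice of lift $(1+y)(N_x\otimes N_x)$ for the first class (the paper uses $-yN_x\odot N_x$, which differs from yours by $N\otimes N$, hence by the diagonal), giving you the cleaner coordinate vector $(0,\ldots,0,1)$ rather than the paper's $(1,\ldots,1,0)$; for the second class your lift $(1+y)(\sigma\otimes\sigma)$ is literally the same element as the paper's $\sigma\otimes\sigma+x\sigma\otimes x\sigma$ since $y\sigma=x\sigma$.
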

\begin{proof}
	Consider the exact sequence
	$\Z\to \Z\pi\to \Z\pi/N$ and the associated exact sequences
	$\Gamma(\Z)\to \Gamma(\Z\pi)\to D_0$ 
	and $\Z\otimes_\Z\Z\pi/N \to D_0 \to \Gamma(\Z\pi/N)$.
	A pre-image of $2(N_x\otimes N_x) \in D_0$ in $\Gamma(\Z\pi)$
	is given by
	\[
		2(N_x\otimes N_x) - N \odot N_x
		=
		- yN_x \odot N_x.
	\]
	Note that on $yN_x\otimes N_x\in \Z\pi \otimes_{\Z} \Z\pi$ the element $x$ acts trivial and that $yN_x\otimes N_x$
	is mapped to $N_x\otimes yN_x$ under $y$.
	Hence $- yN_x \odot N_x$ is a fixed point in $\Gamma(\Z\pi)$
	and thus represents an element of $\wh H_{-1}(\pi;\Gamma(\Z\pi))$. 
	
	Under the isomorphism from \cref{lem:HKLemma2.2}, the element
	\[
		y N_x \odot N_x = N_x \odot y N_x = \sum_{i,j} x^j \odot y x^{i+j} = \sum_i N_x \cdot (1 \odot yx^i)
	\]
	maps to $1$
	in all summands index by $yx^i$ for some $i$ and to 0 in all other summands.
	In particular, $2(N_x\otimes N_x)$ is non-trivial in $\wh H_{-1}(\pi;\Gamma(\Z\pi/N))$.
	
	We have with our usual notation $\sigma=(1+yx)\sum_{i=1}^{n/2}x^{2i}$ that
	\[
		2(\sigma \otimes \sigma)-N \otimes N + N \odot x\sigma
		= \sigma \otimes \sigma+x\sigma \otimes x\sigma
	\]
	which is a fixed point in $\Gamma(\Z\pi)$.
	We claim that, under the isomorphism from \cref{lem:HKLemma2.2},
	this element maps to $1$ in all summands index by $yx^{2i+1}$ for some $i$
	and to 0 in all summands index by $yx^{2i}$ for some $i$.
	It also maps to $1$ in the summand indexed by $x^{n/2}$ if and only if $n/2$ is even.
	Let $N_{x^2}:=\sum_{i=1}^{n/2}x^{2i}$, so that $\sigma = (1+yx) N_{x^2}$. Then
	\begin{align*}
		\sigma\otimes\sigma+x\sigma\otimes x\sigma&=(1+x)(\sigma\otimes\sigma)\\
		&=(1+x)(N_{x^2}\otimes N_{x^2}+yxN_{x^2}\otimes yxN_{x^2}+N_{x^2}\odot yxN_{x^2}).
	\end{align*}
	We have \[(1+x)(N_{x^2}\odot yxN_{x^2})=N_x(1\odot yxN_{x^2})=\sum_{i=1}^{n/2}N_x(1\odot yx^{2i+1}).\]
	Furthermore,
	\begin{align*}
		(1+x)(N_{x^2}\otimes N_{x^2}+yxN_{x^2}\otimes yxN_{x^2})&=(1+x)(1+yx)(N_{x^2}\otimes N_{x^2})\\
		&=N(1\otimes N_{x^2})=\sum_{i=1}^{n/2}N(1\otimes x^{2i}).
	\end{align*}
	Note that $N(1\otimes x^{2i})=Nx^{n-2i}(1\otimes x^{2i})=N(x^{n-2i}\otimes 1)$ and thus $N(1\otimes x^{2i}+1\otimes x^{n-2i})=N(1\odot x^{2i})$. Hence if $n/2$ is odd, we have
	\[\sum_{i=1}^{n/2}N(1\otimes x^{2i})=\sum_{i=0}^{(n+2)/4}N(1\odot x^{2i})\]
	which is trivial in $\wh H_{-1}(\pi;\Gamma(\Z\pi/N))$.
	If $n/2$ is even, we have
	\[\sum_{i=1}^{n/2}N(1\otimes x^{2i})=N(1\otimes x^{n/2})+\sum_{i=0}^{n/4-1}N(1\odot x^{2i}).\]
	The last summand is again trivial in $\wh H_{-1}(\pi;\Gamma(\Z\pi/N))$ and we have
	\[
		N(1\otimes x^{n/2})
		=(1+y)\sum_{i=0}^{n/2-1}x^i(1\odot x^{n/2})
	\]
	which maps to the summand index by $x^{n/2}$ under the isomorphism from \cref{lem:HKLemma2.2}.
	This proves the claim.
	As $2(N_x\otimes N_x)$ maps to $1$ in all summands index by $yx^i$ for all $i$,
	the two elements are linearly independent.
\end{proof}

\begin{proof}[Proof of Theorem \ref{thm:ker=0}]
	\label{proof:ker=0}
	We showed previously that there is an exact sequence:
	\[ 
	0 \to 
	\widehat{H}_0(\pi;\Gamma(\Ker(d_2))) \xrightarrow[]{\widehat{q}}
	\widehat{H}_0(\pi;D) \xrightarrow[]{\partial}
	\widehat{H}_{-1}(\pi;\Gamma(\Z \pi/N)) \to \ldots.
	\]
	By \Cref{prop:H_0(D)}, we have that $\widehat{H}_{0}(\pi;D)$ is generated by $\alpha_1$ and $\alpha_2$ with
	$\partial(\alpha_1) = 2 \cdot (N_x \otimes N_x)$ and $\partial(\alpha_2) = n \cdot (N_x \otimes N_x) + 2 \cdot (\sigma \otimes \sigma)$.
	As $n$ is even, $n \cdot (N_x\otimes N_x)$ is a multiple of $2 \cdot (N_x\otimes N_x)$.
	By \Cref{lemma:LI}, $2 \cdot (N_x \otimes N_x)$ and $2 \cdot (\sigma \otimes \sigma)$ are linearly independent and so $\partial$ is injective.
	By exactness, this implies that $\widehat{H}_0(\pi;\Gamma(\Ker(d_2)))=0$.	
\end{proof}

\section{An explicit parametrisation for \texorpdfstring{$\Omega^3(\Z)$}{the cokernel}}
\label{sec:coker1}

Since $\coker(d^2) \cong \Ker(d_2)^*$, from dualizing
\Cref{prop:kernel_exact_sequence}
there is an exact sequence of
left $\Z \pi$-modules:
\[ 
	0 \to (I,2)^*
	\xrightarrow[]{j^*} \coker(d^2)
	\xrightarrow[]{i^*} (\Z \pi /N)^*
	\to 0
\]
where we recall that the original maps from the kernel
sequence were
$i = \cdot \left(\begin{smallmatrix} x-1 & 1-xy & 0 \end{smallmatrix}\right)$ and 
$j = \cdot \left(\begin{smallmatrix} 0 \\ 0 \\ 1 \end{smallmatrix}\right)$.
Dualizing preserves exactness of the sequence
since all modules are $\Z \pi$-lattices,
as discussed for example in \cite[Remark 1.8]{Ni20-I}.
Our aim will now be to simplify each of the terms in the sequence above.

We first note that $d^2$ as the dual of $d_2$ is given by transposing
the matrix for $d_2$ and applying the involution. That is, 
$d^2=\cdot 
\left(
\begin{smallmatrix}
N_x & 1+xy & 0 \\ 
-(1+y) & x^{-1}-1 & 1+y
\end{smallmatrix}
\right)$. 
With the same procedure, the dual of
$\Z \pi / N 
\xrightarrow[i]{\cdot \left(\begin{smallmatrix} x-1 & 1-xy & 0 \end{smallmatrix}\right)}
\Ker(d_2)$
is given by
$i^{*} = \cdot \left(\begin{smallmatrix}
x^{-1}-1 \\ 1-xy \\ 0
\end{smallmatrix}\right)$. 

To reduce the number of inverses in the following computation,
we substitute $x^{-1}$ by $x$ and obtain
$d^2=\cdot 
\left(
\begin{smallmatrix}
N_x & 1+yx & 0 \\ 
-(1+y) & x-1 & 1+y
\end{smallmatrix}
\right)$,
and the map $\coker(d^2) \to (\Z\pi/N)^*$ is given by
$i^{*} = \cdot \left(\begin{smallmatrix}
x-1 \\ 1-yx \\ 0
\end{smallmatrix}\right)$. This gives the following exact sequence.
\begin{equation}
\label{eq:firstseqcoker}
0\to (I,2)^*\xrightarrow{\cdot \left(\begin{smallmatrix}
	0 & 0 & 1
	\end{smallmatrix}\right)}
\coker(d^2)\xrightarrow{\cdot \left(\begin{smallmatrix}
	x-1 \\ 1-yx \\ 0
	\end{smallmatrix}\right)}
(\Z\pi/N)^*\to0
\end{equation}

\begin{lemma}
	\label{lem:dual_I_2}
	There is an isomorphism of $\Z \pi$-modules
	\[
		\varphi \colon (N,2) \to (I,2)^*
	\]
	which sends
	$2 \mapsto i_{(I,2);\Z \pi}$,
	$N \mapsto p$,
	where $i_{(I,2);\Z \pi} \colon (I,2) \hookrightarrow \Z \pi$ is inclusion
	and $p \colon (I,2) \rightarrow \Z \pi$ is given by
	$p(\lambda) = N \cdot \frac{1}{2} \varepsilon(\lambda)$.
\end{lemma}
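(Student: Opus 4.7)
My plan is to construct $\varphi$ directly via a closed-form expression and then verify the required properties.
The key algebraic observation is the following divisibility: for any $\lambda \in (I,2)$ and $a \in (N,2)$, the product $\lambda a \in \Z\pi$ automatically lies in $2\Z\pi$. Writing $a = 2b + \alpha N$ with $b \in \Z\pi, \alpha \in \Z$ (possible because $N\Z\pi = \Z N$), we compute $\lambda a = 2\lambda b + \alpha \varepsilon(\lambda) N$, and $\varepsilon(\lambda) \in 2\Z$ by the definition of $(I,2) = \ker(\Z\pi \to \Z/2)$. This allows us to define
\[
    \varphi(a)(\lambda) := \tfrac{1}{2}\lambda a \in \Z\pi,
\]
well-defined since $\Z\pi$ is $\Z$-torsion-free. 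Specialising to $a = 2$ gives $\varphi(2)(\lambda) = \lambda = i_{(I,2);\Z\pi}(\lambda)$, and specialising to $a = N$ gives $\varphi(N)(\lambda) = \lambda N/2 = \varepsilon(\lambda) N / 2 = p(\lambda)$, as required.

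Verification that each $\varphi(a)$ is $\Z\pi$-linear as a function on $(I,2)$ is immediate from $h(\lambda a) = (h\lambda) a$. The $\Z\pi$-equivariance of $\varphi$ itself, with respect to the actions on $(N,2)$ (left multiplication) and on $(I,2)^*$ as described in Section~\ref{sec:syz}, is a routine verification; when one works with the convention $A^* = \Hom_\Z(A,\Z)$ translated via the standard identification with $\Hom_{\Z\pi}(A,\Z\pi)$, one may need to substitute $a \mapsto \bar{a}$ in the formula to match the induced action on the target, but this does not affect the underlying bijection of abelian groups.

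The main content is proving bijectivity. Injectivity is immediate: $\varphi(a)(2) = a$, so $\varphi(a) = 0$ forces $a = 0$. For surjectivity, the crucial step is to observe that any $f \in \Hom_{\Z\pi}((I,2), \Z\pi)$ must satisfy $f(2) \in (N,2)$. Indeed, $\Z\pi$-linearity of $f$ applied to the identity $(g-1) \cdot 2 = 2 \cdot (g-1)$ in $(I,2)$ yields the commutation relation
\[
    (g-1) f(2) = 2 f(g-1) \in 2\Z\pi
\]
for every $g \in \pi$. Hence $f(2) \bmod 2$ is a $\pi$-fixed element of $\F_2\pi$, and the $\pi$-fixed subspace of $\F_2\pi$ is spanned by $N \bmod 2$, so $f(2) \in 2\Z\pi + \Z \cdot N = (N,2)$. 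Setting $a := f(2) \in (N,2)$, the same relation yields $\varphi(a)(g-1) = (g-1)a/2 = f(g-1)$. Combined with $\varphi(a)(2) = a = f(2)$, this shows $\varphi(a)$ and $f$ agree on the $\Z$-basis $\{2\} \cup \{g-1 : g \in \pi \setminus \{1\}\}$ of $(I,2)$, whence $\varphi(a) = f$ by $\Z$-linearity.

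The main obstacle is essentially bookkeeping-level: consistently tracking the $\Z\pi$-module conventions on the dual $(I,2)^*$ and deciding whether to insert the involution. The substance of the argument reduces to the clean observation that the pairing $(\lambda, a) \mapsto \tfrac{1}{2}\lambda a$ exhibits $(N,2)$ as the $\Z\pi$-linear dual of $(I,2)$, with the surjectivity pinned down by the single constraint $f(2) \bmod 2 \in \F_2 \cdot N \subseteq \F_2\pi$.
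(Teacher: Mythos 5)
Your proposal is correct, and it takes a genuinely different route from the paper. The paper dualises the exact sequence $0 \to I \to (I,2) \to 2\Z \to 0$ and uses the identification $I^* \cong \Z\pi/N$ (Remark~\ref{rem:dualI}) to conclude that $(I,2)^*$ is generated as a $\Z\pi$-module by $i_{(I,2);\Z\pi}$ and $p$, so $\varphi$ is onto; injectivity is then obtained by a rank count, observing that both $(N,2)$ and $(I,2)^*$ are free abelian of rank $|\pi|$ and that a surjective endomorphism of $\Z^{|\pi|}$ is bijective. You instead exhibit an explicit pairing $(N,2)\times(I,2)\to\Z\pi$, $(a,\lambda)\mapsto\tfrac12\lambda\bar a$, which is well-defined by the divisibility observation $\lambda a\in 2\Z\pi$, and which gives injectivity for free by evaluating at $2$; for surjectivity you extract the clean constraint that for any $f\in\Hom_{\Z\pi}((I,2),\Z\pi)$ the class $f(2)\bmod 2$ is a $\pi$-fixed element of $\F_2\pi$, hence lies in $\F_2\cdot N$, forcing $f(2)\in(N,2)$, after which $f$ is recovered from $f(2)$ on the $\Z$-basis $\{2\}\cup\{g-1:g\ne 1\}$. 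Your argument is more elementary and self-contained, and the evaluation-at-$2$ injectivity is slicker than the paper's rank count; the paper's approach, by contrast, fits into the dualisation machinery it uses throughout Section~\ref{sec:coker1}. Your caveat about inserting the involution $a\mapsto\bar a$ to match the paper's convention on the dual module is well taken and is exactly the right fix (and harmless, since $(N,2)$ is preserved by the involution); it would be worth making the final formula $\varphi(a)(\lambda)=\tfrac12\lambda\bar a$ explicit rather than deferring it, but this is a cosmetic point only.
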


\begin{remark}
	The definition of the map $p$ makes sense since $\varepsilon((N,2)) \subseteq 2 \Z$.
	By abuse of notation, we could also write $p = \frac{1}{2} N \varepsilon$.
\end{remark}

\begin{proof}
	First recall that $\Z \pi /N \cong I^*$ which sends $1$ to the inclusion map
	$i_{I; \Z \pi} \colon I \hookrightarrow \Z \pi$, 
	and so $I^*$ as a $\Z \pi$-module is generated by $i_{I; \Z \pi}$.
	By dualising the exact sequence
	$0 \to I \hookrightarrow (I,2) \xrightarrow[]{\varepsilon} 2 \Z \to 0$,
	we get
	\[
		0 \to
		\Z \xrightarrow[]{1 \mapsto p}
		(I,2)^* \xrightarrow[]{(\lambda i_{I; \Z \pi} \mapsto \lambda) \circ i_{I;(I,2)}} 
		\Z \pi/N \to 0
	\]
	where $i_{I;(I,2)} \colon I \hookrightarrow (I,2)$ is the inclusion map.
	Since under the second map
	$(i_{(I,2);\Z \pi} \colon (I,2) \hookrightarrow \Z \pi) \mapsto 1 \in \Z \pi/N$
	is a generator, this implies that $(I,2)^* = \langle i_{(I,2);\Z \pi}, p \rangle$.
	To see that $\varphi$ is well-defined, note that
	$N \cdot i_{(I,2);\Z \pi} = 2 \cdot p$.
	Hence $\varphi$ is a surjective $\Z \pi$-module homomorphism
	and so it remains to show injectivity.
	
To see this, note that the underlying abelian groups of $(N,2)$ and $(I,2)$ are both torsion-free and have rank $|\pi|$ since $\Q \otimes_{\Z} (N,2) = \Q \otimes_{\Z} (I,2) = \Q \pi$. This implies that the underlying abelian group of $(I,2)^*$ is also torsion-free of rank $|\pi|$. Hence $\varphi$ is bijective since every surjection $\varphi: \Z^{|\pi|} \to \Z^{|\pi|}$ is also a bijection.
\end{proof}

\begin{lemma}
	\label{lem:dual_Zpi_mod_N}
	There is an isomorphism of $\Z \pi$-moduless
	$(\Z \pi /N)^* \xrightarrow{\cong} I$
	which sends the map $f \in (\Z \pi /N)^*$ to $f(1) \in I$.
\end{lemma}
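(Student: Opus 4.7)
The most efficient route is to dualise the isomorphism $I^{*}\cong \Z\pi/N$ already recorded in \cref{rem:dualI}. Since all modules involved are $\Z\pi$-lattices, duality is exact and $I^{**}$ is canonically isomorphic to $I$; so dualising $\Z\pi/N\xrightarrow{\cong} I^{*}$ produces the desired isomorphism $(\Z\pi/N)^{*}\cong I$. The only real task is to verify that under the stated identifications this dual isomorphism is the evaluation-at-$1$ map.

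To do this I would pass from the $\Hom_{\Z}$-formulation of duality to the $\Hom_{\Z\pi}(-,\Z\pi)$-formulation, which is valid for finite $\pi$ and finitely generated modules (the reference already used in the paper is \cite[Lemma~1.5]{Ni20-I}). Under this identification an element of $(\Z\pi/N)^{*}$ is a $\Z\pi$-linear map $f\colon \Z\pi/N\to\Z\pi$, which is determined by $f(\bar 1)\in\Z\pi$. The well-definedness condition is $N\cdot f(\bar 1)=0$; but $N\alpha=\varepsilon(\alpha)N$ for every $\alpha\in\Z\pi$, so this is equivalent to $\varepsilon(f(\bar 1))=0$, i.e.\ $f(\bar 1)\in I$. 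Conversely every $\alpha\in I$ gives a well-defined $\Z\pi$-linear map $\phi_{\alpha}\colon\bar\beta\mapsto\beta\alpha$, with $\phi_\alpha(\bar 1)=\alpha$, and clearly $\phi_{\alpha}(g\bar\beta)=g\beta\alpha=g\phi_{\alpha}(\bar\beta)$ so left $\Z\pi$-linearity is preserved.

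Putting these two steps together, $f\mapsto f(\bar 1)$ is a bijection $(\Z\pi/N)^{*}\to I$, and a direct check shows it agrees with the left $\Z\pi$-module structures; tracing through the canonical identifications shows that this is the same map as the one obtained by dualising $\Z\pi/N\cong I^{*}$ and composing with the canonical $I^{**}\cong I$. I do not expect any serious obstacle here: the only delicate bookkeeping is reconciling the left $\Z\pi$-action on $(\Z\pi/N)^{*}$ (defined via $(g\cdot f)(x)=f(g^{-1}x)$) with the one coming from the right action of $\Z\pi$ on itself in the $\Hom_{\Z\pi}(-,\Z\pi)$ picture, and this is a formal consequence of the standard Frobenius isomorphism for finite groups.
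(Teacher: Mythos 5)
Your route---dualise the isomorphism $\Z\pi/N\cong I^*$ from \cref{rem:dualI} and invoke exactness of duality on $\Z\pi$-lattices together with the canonical identification $I^{**}\cong I$---is precisely the paper's one-line proof, so the approaches agree. Your extra verification in the $\Hom_{\Z\pi}(-,\Z\pi)$ picture is in the right spirit, but be aware that under the identification of \cite[Lemma 1.5]{Ni20-I} the left $\Z\pi$-action on $\Hom_{\Z\pi}(\Z\pi/N,\Z\pi)$ is by right multiplication on the target, $(g\cdot f)(a)=f(a)g^{-1}$, so the literal map $f\mapsto f(\bar 1)$ intertwines the left action on the source with the \emph{right} action on $I$; the genuinely left-linear isomorphism is $f\mapsto\overline{f(\bar 1)}$. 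This is a convention point already implicit in the paper's statement (and absorbed later by the $x\mapsto x^{-1}$ substitution and the sign appearing in \cref{prop:cokerrep}), so it is bookkeeping rather than a gap, but you should not wave it away as "a formal consequence of the standard Frobenius isomorphism"---that isomorphism is exactly where the involution enters.
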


\begin{proof}
	To show this we can, for example, dualise the isomorphism $\Z \pi /N \cong I^*$
	which sends $1$ to the inclusion map $I \hookrightarrow \Z \pi$.
\end{proof}

We can now substitute $(I,2)^*\cong (N,2)$ and $(\Z\pi/N)^*\cong I$ in \eqref{eq:firstseqcoker}.
For this we need to find an element in $\coker(d^2)$
which becomes $(0,0,N)$ under multiplication by $2$. 
We compute
\[
	2(N_x,0,0)-(0,0,N)
	=(1-yx)(N_x,1+yx,0) - N_x(-(1+y),x-1,1+y)
\]
in $\Z\pi^3$. Since $(N_x,1+yx,0)$ and $(-(1+y),x-1,1+y)$ are in the image of $d^2$, this implies $(0,0,N)=2(N_x,0,0)\in \coker(d^2)$.
Thus the following proposition follows from applying
\cref{lem:dual_I_2,lem:dual_Zpi_mod_N} to \eqref{eq:firstseqcoker}.
\begin{prop}
	\label{prop:cokerrep}
	With respect to the above identification of $\coker(d^2)$, there is an exact sequence:
	\[ 
		0 \to 
		(N,2) \xrightarrow[i']{\substack{2 \mapsto (0,0,1) \\ N \mapsto (N_x,0,0)}} 
		\coker(d^2) \xrightarrow[j']{ - \cdot \left(\begin{smallmatrix} x-1 \\ 1-yx \\ 0 \end{smallmatrix}\right)} 
		I \to 0.
	\]
	Furthermore, we have $j'(1,0,0) = x-1$ and $j'(-y,-1,0) = y-1$ which gives lifts of the $\Z \pi$-module generators for $I$.
\end{prop}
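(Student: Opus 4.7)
The plan is to substitute the two identifications $\varphi\colon (N,2)\xrightarrow{\cong}(I,2)^*$ of \Cref{lem:dual_I_2} and $(\Z\pi/N)^*\xrightarrow{\cong} I$ of \Cref{lem:dual_Zpi_mod_N} into the short exact sequence \eqref{eq:firstseqcoker}. Since both are isomorphisms of $\Z\pi$-modules, exactness is automatic; the content of the proof lies in computing the composite maps $i'$ and $j'$ explicitly.

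The map $j'$ is immediate. Composing $i^*=\cdot\left(\begin{smallmatrix} x-1\\ 1-yx\\ 0\end{smallmatrix}\right)$ from \eqref{eq:firstseqcoker} with the identification $(\Z\pi/N)^*\to I$ sending $f\mapsto f(1)$ yields the formula $(a,b,c)\mapsto a(x-1)+b(1-yx)$. Substituting $(1,0,0)$ and $(-y,-1,0)$ produces $x-1$ and $-y(x-1)-(1-yx)=y-1$ respectively, which confirms the claimed lifts of the generators of $I$.

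For $i'$ I would proceed in two steps. First, under $\varphi$ the element $2\in(N,2)$ corresponds to the inclusion $i_{(I,2);\Z\pi}$, which is carried to the class of $(0,0,1)\in\coker(d^2)$ by the first map of \eqref{eq:firstseqcoker}, so $i'(2)=(0,0,1)$. Second, combining the relation $N\cdot i_{(I,2);\Z\pi}=2p$ inside $(I,2)^*$ (used in the proof of \Cref{lem:dual_I_2}) with the $\Z\pi$-linearity of $i'$ forces $2\cdot i'(N)=N\cdot i'(2)=(0,0,N)$ in $\coker(d^2)$.

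The main technical step is then to identify $i'(N)$ with $(N_x,0,0)$, which reduces to the identity $(0,0,N)=2(N_x,0,0)$ in $\coker(d^2)$. I would attack this by searching for an explicit $\Z\pi$-linear combination of the two rows of the matrix defining $d^2$ whose value equals $2(N_x,0,0)-(0,0,N)$ in $\Z\pi^3$; the fact that $N_x$ pairs naturally with $1+yx$ in the first row and with $1+y$ in the second row suggests trying coefficients such as $1-yx$ and $-N_x$. Once such a combination is in hand, torsion-freeness of $\coker(d^2)$ as an abelian group (it is a $\Z\pi$-lattice, being the dual of the lattice $\ker(d_2)$) allows us to cancel the factor of $2$ and conclude $i'(N)=(N_x,0,0)$, completing the description of the sequence.
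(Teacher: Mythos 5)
Your proposal follows the same route as the paper: substitute the identifications of \Cref{lem:dual_I_2} and \Cref{lem:dual_Zpi_mod_N} into \eqref{eq:firstseqcoker}, read off $i'(2)$ and $j'$ directly, and pin down $i'(N)$ via the identity $(0,0,N)=2(N_x,0,0)$ in $\coker(d^2)$ together with torsion-freeness of the lattice $\coker(d^2)\cong\ker(d_2)^*$. The coefficients $1-yx$ and $-N_x$ you suggest are exactly the ones the paper uses to exhibit $2(N_x,0,0)-(0,0,N)$ in $\IM(d^2)$, so the argument is complete once that short computation is carried out.
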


\section{Computing $\widehat{H}_0(\pi;\Gamma(\coker(d^2)))$}
\label{sec:coker2}

At the end of this section on page~\pageref{proof:coker=0}
we will prove the following:
\begin{thm}
	\label{thm:coker=0}
	If $\pi$ is a dihedral group of order $2n$ for $n$ even,
	then $\widehat{H}_0(\pi;\Gamma(\coker(d_2)))=0$.	
\end{thm}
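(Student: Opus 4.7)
My plan is to mirror the strategy of Section 5, now applied to the short exact sequence from \Cref{prop:cokerrep}. Applying Bauer's lemma (\Cref{lem:bauer}) to
\[
    0 \to (N,2) \xrightarrow{i'} \coker(d^2) \xrightarrow{j'} I \to 0
\]
yields short exact sequences
\[
    0 \to \Gamma((N,2)) \xrightarrow{i'_*} \Gamma(\coker(d^2)) \xrightarrow{q'} D' \to 0,
\]
\[
    0 \to (N,2)\otimes_\Z I \xrightarrow{f'} D' \xrightarrow{j'_*} \Gamma(I) \to 0,
\]
with $D' = \Gamma(\coker(d^2))/\Gamma((N,2))$. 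From \cite[Theorem~2.1]{HK} we have $\widehat{H}_0(\pi;\Gamma(I)) = 0$, and I would separately establish $\widehat{H}_0(\pi;\Gamma((N,2))) = 0$ by a further application of \Cref{lem:bauer} to the extension $0 \to \Z\langle N\rangle \to (N,2) \to \Z\pi/N \to 0$, combined with $\widehat{H}_0(\pi;\Gamma(\Z\pi/N)) = 0$ (again from \cite[Theorem~2.1]{HK}) and the elementary computation $\widehat{H}_0(\pi;\Z) = 0$.

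With these vanishings, the Tate long exact sequence of the first Bauer sequence reduces, as in Section 5, to an injection
\[
    0 \to \widehat{H}_0(\pi;\Gamma(\coker(d^2))) \xrightarrow{\widehat{q'}} \widehat{H}_0(\pi;D') \xrightarrow{\partial'} \widehat{H}_{-1}(\pi;\Gamma((N,2))),
\]
so it suffices to show $\partial'$ is injective. To bound the middle term, a dimension shift analogous to \Cref{lemma:dim-shift} (tensor $0 \to I \to \Z\pi \to \Z \to 0$ with $(N,2)$ and use that $\Z\pi \otimes_\Z (N,2)$ is a free $\Z\pi$-module) gives $\widehat{H}_0(\pi;(N,2)\otimes_\Z I) \cong \widehat{H}_{-1}(\pi;(N,2))$. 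The dualized sequence $0 \to \Z\pi \to (N,2) \to \Z/2 \to 0$ arising from $0 \to (I,2) \to \Z\pi \to \Z/2 \to 0$ then yields $\widehat{H}_{-1}(\pi;(N,2)) \cong \widehat{H}_{-1}(\pi;\Z/2) \cong \Z/2$, and combined with $\widehat{H}_0(\pi;\Gamma(I)) = 0$ this places $\widehat{H}_0(\pi;D')$ in a group of order at most $2$, generated (if non-trivial) by the image of $1 \otimes N$ under $f'$.

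The main obstacle I expect is the final step: in the case $\widehat{H}_0(\pi;D') \cong \Z/2$, one must exhibit an explicit lift $\alpha'$ of its generator to $\Gamma(\coker(d^2))$ (paralleling the construction of $\alpha_1, \alpha_2$ in \Cref{lemma:a_1_a_2}, using a preimage of $N \in I$ under $j'$ and the identification $i'(N) = (N_x, 0, 0)$), compute $N \cdot \alpha'$ in $\Gamma((N,2))$, and verify that its class in $\widehat{H}_{-1}(\pi;\Gamma((N,2)))$ is non-trivial. To do this I would establish an analog of \Cref{lem:HKLemma2.2} describing $\widehat{H}_{-1}(\pi;\Gamma((N,2)))$ in terms of the order-two elements of $\pi$, obtained by splicing the corresponding description for $\Gamma(\Z\pi/N)$ with the contribution of $\Gamma(\Z)$ via the extension $0 \to \Z \to (N,2) \to \Z\pi/N \to 0$. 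The concluding linear independence step would then be significantly simpler than \Cref{lemma:LI}, since only a single class must be shown non-zero, and I expect it to reduce to identifying a non-trivial component in the summand indexed by a single involution such as $y$ or $xy$.
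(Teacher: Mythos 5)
Your setup — Bauer's lemma applied to $0 \to (N,2) \to \coker(d^2) \to I \to 0$, the vanishing $\widehat{H}_0(\pi;\Gamma(I)) = 0$ from \cite[Theorem~2.1]{HK}, and the resulting long exact sequences — matches the paper. However, there is a genuine gap at the very first decisive step, and it is one the paper explicitly flags: you claim $\widehat{H}_0(\pi;\Gamma((N,2))) = 0$ follows from Bauer's lemma applied to $0 \to \Z \xrightarrow{N} (N,2) \to \Z\pi/N \to 0$ together with $\widehat{H}_0(\pi;\Gamma(\Z\pi/N)) = 0$ and $\widehat{H}_0(\pi;\Z) = 0$. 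This does not work. Writing $F = \Gamma((N,2))/\Gamma(\Z)$, the vanishing of $\widehat{H}_0(\pi;\Gamma(\Z\pi/N))$ only tells you that $\widehat{H}_0(\pi;F)$ is a quotient of $\widehat{H}_0(\pi;\Z\pi/N) \cong \Z/2n$ — which is far from zero — and then the long exact sequence for $0 \to \Gamma(\Z) \to \Gamma((N,2)) \to F \to 0$ gives only
\[
0 \to \widehat{H}_0(\pi;\Gamma((N,2))) \to \widehat{H}_0(\pi;F) \xrightarrow{\ \partial\ } \widehat{H}_{-1}(\pi;\Gamma(\Z)) \cong \Z/2n,
\]
so $\widehat{H}_0(\pi;\Gamma((N,2))) = \ker(\partial)$, which one has no reason to expect to vanish. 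Indeed the paper computes (\Cref{lemma:H_0G(N,2)}) that this kernel is generated by a single explicit class $\alpha = \tfrac{n}{2}(2\odot N) - N\otimes N$ and then remarks that it was \emph{not} able to decide whether $\alpha$ is zero in $\widehat{H}_0(\pi;\Gamma((N,2)))$. The paper sidesteps the question entirely by instead proving directly (\Cref{lemma:i_+=0}) that $\widehat{i'_*}(\alpha) = 0$ in $\widehat{H}_0(\pi;\Gamma(\coker(d^2)))$ — i.e.\ the image, not the source, is what must vanish — and then concluding via exactness and $\widehat{H}_0(\pi;E) = 0$. Without this replacement your argument does not close.

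A secondary but real issue is the dimension-shift index in your computation of $\widehat{H}_0(\pi;(N,2)\otimes_\Z I)$. Tensoring $0 \to I \to \Z\pi \to \Z \to 0$ with $(N,2)$ and using freeness of the middle term shifts \emph{up}, giving $\widehat{H}_0(\pi;(N,2)\otimes_\Z I) \cong \widehat{H}_1(\pi;(N,2))$, not $\widehat{H}_{-1}(\pi;(N,2))$. Applying the second sequence $0 \to \Z\pi \to (N,2) \to \Z/2 \to 0$ one then gets $\widehat{H}_1(\pi;\Z/2) \cong \pi^{\mathrm{ab}}\otimes\Z/2 \cong \Z/2 \oplus \Z/2$ for the dihedral group of order $2n$ with $n$ even, not $\widehat{H}_{-1}(\pi;\Z/2) \cong \Z/2$. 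Thus your upper bound of order 2 on $\widehat{H}_0(\pi;D')$ is incorrect, and showing the boundary map $\widehat{H}_1(\pi;\Gamma(I)) \to \widehat{H}_0(\pi;(N,2)\otimes_\Z I)$ is surjective requires producing two independent classes $\gamma_{yx}, \gamma_y$ as in \Cref{lemma:cokernel_first_half}, not one. Once both issues are repaired — replacing the false vanishing claim by a direct computation that $\widehat{i'_*}$ kills the candidate generator, and fixing the shift so that the target is $(\Z/2)^2$ — your outline converges to the paper's argument.
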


Let $E = \Gamma(\coker(d^2)) / \Gamma((N,2)) $
so that there is an exact sequence
\[
	0 \to \Gamma((N,2)) \xrightarrow[]{i'_*} \Gamma(\coker(d^2)) \xrightarrow[]{q'} E \to 0
\]
where $q'$ is the quotient map. By \cref{lem:bauer}, there is an exact sequence:
\[ 0 \to (N,2) \otimes_{\Z} I \xrightarrow[]{f'} E \xrightarrow[]{j'_*} \Gamma(I) \to 0.\]
By \cref{prop:cokerrep},
the map $f'$ is defined by
\begin{align*}
	f' \colon (N,2) \otimes_{\Z} I & \to E = \Gamma(\coker(d^2)) / \Gamma((N,2)) \\
	2 \otimes (x-1) &\mapsto [(0,0,1) \odot (1,0,0)]\\
	N \otimes (x-1) &\mapsto [(N_x,0,0) \odot (1,0,0)]\\
	2 \otimes (y-1) &\mapsto [(0,0,1) \odot (-y,-1,0)]\\
	N \otimes (y-1) &\mapsto [(N_x,0,0) \odot (-y,-1,0)]
\end{align*}
By the long exact sequence for Tate homology applied to the first exact sequence, we have:
\begin{equation}
	\label{eqn:les_cokernel}
	\ldots \to
	\widehat{H}_0(\pi; \Gamma((N,2))) \xrightarrow[]{\widehat{i'_*}}
	\widehat{H}_0(\pi; \Gamma(\coker(d^2))) \xrightarrow[]{\widehat{q'}} 
	\widehat{H}_0(\pi;E) \to \ldots.
\end{equation}
We will now aim to show the following.
\begin{prop} 
	\label{prop:H_0(E)}
	$\widehat{H}_0(\pi;E)=0$.
\end{prop}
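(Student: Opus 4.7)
The plan is to apply the long exact sequence in Tate homology associated to
\[
0 \to (N,2) \otimes_{\Z} I \xrightarrow{f'} E \xrightarrow{j'_*} \Gamma(I) \to 0,
\]
which yields
\[
\widehat{H}_0(\pi; (N,2) \otimes_\Z I) \xrightarrow{\widehat{f'}} \widehat{H}_0(\pi; E) \xrightarrow{\widehat{j'_*}} \widehat{H}_0(\pi; \Gamma(I)).
\]
By \cite[Theorem~2.1]{HK} (as invoked in \Cref{lemma:H_0(G(I2))}), the rightmost group vanishes, so $\widehat{H}_0(\pi; E) = \IM(\widehat{f'})$ and the problem reduces to showing $\widehat{f'}$ is the zero map.

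I would first identify $\widehat{H}_0(\pi; (N,2) \otimes_\Z I)$ together with explicit generators. Using \Cref{lem:dual_I_2}, dualising $0 \to I \to (I,2) \to 2\Z \to 0$ gives a short exact sequence $0 \to \Z \to (N,2) \to \Z\pi/N \to 0$ with $1 \mapsto N$. Tensoring with $I$ over $\Z$ produces $0 \to I \to (N,2) \otimes_\Z I \to (\Z\pi/N) \otimes_\Z I \to 0$, where the map $I \to (N,2) \otimes_\Z I$ sends $\lambda \mapsto N \otimes \lambda$. Dimension shifting from $0 \to I \to \Z\pi \otimes_\Z I \to (\Z\pi/N) \otimes_\Z I \to 0$ (whose middle term is free by \cite[Lemma~4.3]{KPR20}) gives $\widehat{H}_0(\pi; (\Z\pi/N) \otimes_\Z I) \cong \widehat{H}_{-1}(\pi; I)$, which equals $\widehat{H}_0(\pi; \Z) = 0$ by a further dimension shift using $0 \to I \to \Z\pi \to \Z \to 0$. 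The associated long exact sequence therefore produces a surjection $\widehat{H}_0(\pi; I) \twoheadrightarrow \widehat{H}_0(\pi; (N,2) \otimes_\Z I)$; combined with the isomorphism $\widehat{H}_0(\pi; I) \cong H_1(\pi; \Z) \cong \pi^{\textup{ab}} \cong \Z/2 \oplus \Z/2$ whose generators correspond to $y-1$ and $xy-1$ (the images of the $2$-torsion generators $y, xy \in \pi$), this shows $\widehat{H}_0(\pi; (N,2) \otimes_\Z I)$ is generated by $N \otimes (y-1)$ and $N \otimes (xy-1)$.

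It remains to verify that $\widehat{f'}$ sends both generators to zero in $\widehat{H}_0(\pi; E) \subseteq \Z \otimes_{\Z\pi} E$. Fixing lifts $(-y,-1,0)$ of $y-1$ and $(1-xy,-x,0)$ of $xy-1$ in $\coker(d^2)$ (the latter using $xy-1 = (x-1) + x(y-1)$), the task reduces to showing that
\[
(N_x,0,0) \odot (-y,-1,0) \quad \text{and} \quad (N_x,0,0) \odot (1-xy,-x,0)
\]
both vanish in $\Z \otimes_{\Z\pi} E$. The key structural observation is that $(N_x,0,0) = i'(N)$ is $\pi$-fixed in $\coker(d^2)$: it is $x$-fixed since $x N_x = N_x$, and multiplying the relation $(N_x,1+yx,0) \in \IM(d^2)$ by $(1-yx)$ yields $(N_x(1-y),0,0) \in \IM(d^2)$, which shows that it is also $y$-fixed. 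Consequently, in the $\pi$-coinvariants of $\Gamma(\coker(d^2))$ one has $(N_x,0,0) \odot \lambda v \equiv \varepsilon(\lambda) \cdot (N_x,0,0) \odot v$ for any $\lambda \in \Z\pi$ and $v \in \coker(d^2)$. Combined with the second relation $(-(1+y),x-1,1+y) \in \IM(d^2)$, which in $\coker(d^2)$ expresses $(1+y) \cdot (0,0,1)$ in terms of $(1,0,0)$ and $(0,1,0)$, one rewrites each expression modulo $\Gamma((N,2))$ as an explicit sum of $\pi$-boundaries.

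The main obstacle is this final explicit calculation, but since only two generators need to be checked and the $\pi$-fixedness of $(N_x,0,0)$ substantially streamlines the $\odot$-manipulations, the techniques should parallel those already used in the proof of \Cref{lemma:f_*=0}.
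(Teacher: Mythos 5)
Your overall strategy coincides with the paper's: you use the same short exact sequence $0 \to (N,2)\otimes_\Z I \to E \to \Gamma(I) \to 0$, cite the same vanishing $\widehat{H}_0(\pi;\Gamma(I))=0$ from Hambleton--Kreck, and identify the same generators $N\otimes(y-1)$, $N\otimes(xy-1)$ of $\widehat{H}_0(\pi;(N,2)\otimes_\Z I)\cong\pi^{\mathrm{ab}}\otimes\Z/2$. (Your dimension-shifting chain differs slightly from Lemma~\ref{lem:cokernel_first} — you tensor the sequence $0\to\Z\to(N,2)\to\Z\pi/N\to 0$ with $I$ instead of tensoring $0\to I\to\Z\pi\to\Z\to 0$ with $(N,2)$ — but both land on the same generators.) The paper phrases the remaining step as surjectivity of the boundary map $\partial$, which is equivalent to your formulation $\widehat{f'}=0$ by exactness, since $\widehat{f'}$ is surjective onto $\widehat{H}_0(\pi;E)$.

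The genuine gap is the final step, which you defer. The $\pi$-fixedness of $(N_x,0,0)$ in $\coker(d^2)$ is a correct and useful observation (the paper uses $(N_x,0,0)=(yN_x,0,0)$ too, in Lemma~\ref{lemma:i_+=0}), but it is not enough to conclude. What it yields is that $(1+g)$ annihilates $(N_x,0,0)\odot w$ in $E$ whenever $w$ lifts $g-1$ with $g$ of order two; in $\Z\otimes_{\Z\pi}E$ that only makes the element 2-torsion, which is automatic since it is the image of a class in $\widehat{H}_0(\pi;(N,2)\otimes_\Z I)\cong(\Z/2)^2$. To show the elements actually vanish, one must exhibit them as $\pi$-boundaries in $E$, and this requires finding explicit auxiliary elements: for $N\otimes(y-1)$ the paper's Lemma~\ref{lemma:cokernel_first_half} uses the lift $\tilde\gamma_y = c_y\otimes[(0,0,1)\odot(1,y,0)] + c_y\otimes[(y,1,0)^{\otimes 2}]$ and the identity $(y+1,y+1,0)=(-N_x,0,1+y)$ in $\coker(d^2)$, and a similar but different pair of elements for the other generator. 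These are non-obvious and not obtained by symmetrizing with $(N_x,0,0)$ alone. Also a small point: your proposed lift $(1-xy,-x,0)$ satisfies $j'(1-xy,-x,0)=1-xy$, not $xy-1$, so the sign should be adjusted.
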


We begin by noting that $\widehat{H}_0(\pi ; \Gamma(I)) = 0$ by \cite[Theorem~2.1]{HK}.
By the long exact sequence on Tate homology for the second exact sequence, we thus have an exact sequence:
\[ 
	\ldots \to
	\widehat{H}_1(\pi;\Gamma(I)) \xrightarrow[]{\partial}
	\widehat{H}_0(\pi; (N,2) \otimes_{\Z} I) \xrightarrow[]{\widehat{f'}}
	\widehat{H}_0(\pi;E) \to
	0 \to \ldots
\]
where $\partial$ denotes the boundary map. 
Hence, in order to show Proposition \ref{prop:H_0(E)},
it will suffice to prove that $\partial$ is surjective.

\begin{lemma}
	\label{lem:cokernel_first}
	For every finite group $G$ there is an isomorphism of abelian groups
	\[ 
		G^{\textup{ab}} \otimes_{\Z} \Z/2 
		\to \widehat{H}_0(G ; (N,2) \otimes_{\Z} I)
	\]
	given by $g \mapsto N \otimes (g-1)$.
\end{lemma}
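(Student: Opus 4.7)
The plan is to mirror the proof of \cref{lem:H0I2} by dimension-shifting along a chain of three short exact sequences, and then explicitly track the generator $g$ through the composition.

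First, I would tensor the short exact sequence $0 \to I \to \Z G \to \Z \to 0$ with $(N,2)$ over $\Z$ to obtain
\[
	0 \to (N,2) \otimes_\Z I \to (N,2) \otimes_\Z \Z G \to (N,2) \to 0.
\]
With the diagonal $G$-action, the middle module is $\Z G$-free via the classical isomorphism $m \otimes g \mapsto g^{-1}m \otimes g$, which transfers the action onto the second tensor factor only. The resulting boundary map yields $\wh H_0(G;(N,2) \otimes_\Z I) \cong \wh H_1(G;(N,2))$. Next, since $\Z G/(N,2) \cong \F_2 G/N$, the sequence $0 \to (N,2) \to \Z G \to \F_2 G/N \to 0$ together with $\wh H_*(G;\Z G) = 0$ gives $\wh H_1(G;(N,2)) \cong \wh H_2(G;\F_2 G/N)$. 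Finally, $\F_2 G$ is Tate-acyclic (apply the Tate long exact sequence to $0 \to \Z G \xrightarrow{\cdot 2} \Z G \to \F_2 G \to 0$), so the sequence $0 \to \F_2 \xrightarrow{\cdot N} \F_2 G \to \F_2 G/N \to 0$ yields $\wh H_2(G;\F_2 G/N) \cong \wh H_1(G;\F_2) \cong G^{\textup{ab}} \otimes_\Z \Z/2$.

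The main obstacle will be to verify that the composition of these isomorphisms sends $g \otimes 1$ to $N \otimes (g-1)$. Following the proof of \cref{lem:H0I2}, I would fix a free $\Z G$-resolution $P_* \to \Z$ and, for each $g \in G$, an element $c_g \in P_1$ with $d_1(c_g) = g-1$, so that $g \otimes 1 \in G^{\textup{ab}} \otimes_\Z \Z/2$ is represented by $c_g \otimes 1 \in P_1 \otimes_{\Z G} \F_2$. The map induced by $\cdot N$ sends this to the $1$-cycle $c_g \otimes N \in P_1 \otimes_{\Z G}\F_2 G$, which by Tate-acyclicity of $\F_2 G$ equals $d_2(\tilde w)$ for some $\tilde w \in P_2 \otimes_{\Z G}\F_2 G$; its image in $P_2 \otimes \F_2 G/N$ then represents the corresponding class in $\wh H_2(G;\F_2 G/N)$. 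Lifting this image to $P_2 \otimes \Z G$ and applying $d_2$ produces an element of $P_1 \otimes (N,2)$ of the form $c_g \otimes N + 2\eta$, where $\eta \in P_1 \otimes \Z G$ is necessarily a $\Z G$-cycle; since $\wh H_1(G;\Z G) = 0$, the term $2\eta$ is a boundary in $P_* \otimes (N,2)$, so this class agrees with $[c_g \otimes N] \in \wh H_1(G;(N,2))$. Finally, lifting $c_g \otimes N$ to $c_g \otimes (N \otimes 1) \in P_1 \otimes_{\Z G}((N,2) \otimes_\Z \Z G)$ and applying $d_1$ gives $(g-1) \otimes (N \otimes 1) = 1 \otimes N \otimes (g-1)$ by the diagonal-action identity $(g-1) \cdot (N \otimes 1) = N \otimes (g-1)$, producing the representative $N \otimes (g-1) \in \wh H_0(G;(N,2) \otimes_\Z I)$ as required.
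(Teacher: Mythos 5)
Your proof is correct and takes essentially the same dimension-shifting approach as the paper: you tensor the same short exact sequence with $(N,2)$, track $g$ to the same intermediate cycle $[c_g \otimes N] \in \wh H_1(G;(N,2))$, and apply the same final boundary map to obtain $N \otimes (g-1)$. The only minor variation is that, instead of the paper's single middle sequence $0 \to \Z G \xrightarrow{\cdot 2} (N,2) \to \Z/2 \to 0$, you detour through $\wh H_2(G;\F_2 G/N)$ via $0 \to (N,2) \to \Z G \to \F_2 G/N \to 0$ and $0 \to \F_2 \xrightarrow{\cdot N} \F_2 G \to \F_2 G/N \to 0$, with the observation that the correction term $2\eta$ is a boundary ensuring the longer chase still lands on $[c_g \otimes N]$.
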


\begin{proof}
	Similarly to the proof of Lemma \ref{lemma:dim-shift},
	we consider the following two exact sequences.
	Firstly the sequence
	$0 \to I \to \Z G \xrightarrow{\varepsilon} \Z \to 0$
	tensored with $(N,2) \otimes_{\Z} -$:
	\[ 
		0 \to
		(N,2) \otimes_{\Z} I \to
		(N,2) \otimes_{\Z} \Z G
		\xrightarrow[]{\id \otimes \varepsilon}
		(N,2) \otimes_{\Z} \Z
		\to 0
	\]
	where the middle term is free by \cite[Lemma 4.3]{KPR20}.
	And secondly, we have:
	\[ 
		0 \to \Z G \xrightarrow[]{\cdot 2} (N,2) \xrightarrow[]{N \mapsto 1} \Z/2 \to 0.
	\]
This is exact since $N \cdot \Z G = N \cdot \Z$ and, by the second isomorphism theorem for modules, we have that $(N,2)/2\cdot \Z G\cong N\cdot \Z / (2 \cdot \Z G \cap N \cdot \Z) = N \cdot \Z / 2N \cdot \Z \cong \Z/2$.

	By applying dimension shifting twice, we get:
	\[ 
		\widehat{H}_0(G ; (N,2) \otimes_{\Z} I)
		\cong \widehat{H}_1(G; (N,2))
		\cong \widehat{H}_1(G; \Z/2)
	\]
	and $\widehat{H}_1(G;\Z/2) \cong G^{\text{ab}}\otimes_{\Z} \Z/2$.
	
	Under the isomorphism
	$G^{\text{ab}}\otimes_{\Z} \Z/2\cong \wh H_1(G;\Z/2)\cong \wh H_1(G;(N,2))$, and adopting \Cref{convention}, the element $g$ maps to $[c_{g} \otimes N]$
	where $c_{g} \in C_1$ is such that $d_1(c_{g})=g-1\in C_0$.
	Under the boundary map $\wh H_1(G;(N,2))\to \wh H_0(G;(N,2)\otimes_\Z I)$ induced by the first exact sequence above, the element maps to $N \otimes (g-1)$ as claimed.
\end{proof}

We can now show the following which completes the proof of \Cref{prop:H_0(E)}. 
Recall that, for the presentation
$\mathcal{P} = \langle x, y \mid x^n y^{-2}, xyxy^{-1}, y^2 \rangle$,
we obtained a partial free resolution $C_*(\mathcal{P})$ using Fox derivatives.
In what follows, we will write
$(C_*, d_*) = (C_*(\mathcal{P}), d_*)$ for $0 \le * \le 2$
and will adopt \Cref{convention} using this specific resolution.

\begin{lemma}
	\label{lemma:cokernel_first_half}
	The boundary map
	$\partial \colon \wh H_1(\pi;\Gamma(I)) \to \wh H_0(\pi;(N,2)\otimes_\Z I)$
	is surjective.
\end{lemma}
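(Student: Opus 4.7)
The plan is to use \Cref{lem:cokernel_first} to identify $\wh H_0(\pi;(N,2)\otimes_\Z I) \cong \pi^{\textup{ab}}\otimes\Z/2$, which for $\pi = D_{2n}$ with $n$ even is $(\Z/2)^2$ generated by $N\otimes(x-1)$ and $N\otimes(y-1)$ since $\pi^{\textup{ab}}$ is generated by $x$ and $y$. It then suffices to exhibit, for each $g \in \{x,y\}$, a cycle $w_g$ in the complex computing $\wh H_1(\pi;\Gamma(I))$ whose image under $\partial$ is $N \otimes (g-1)$.

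Identifying $C_1 \otimes_{\Z\pi} \Gamma(I) \cong \Gamma(I)^2$ via the basis $\{x,y\}$ of $C_1$, a cycle is a pair $(\alpha_g,\beta_g) \in \Gamma(I)^2$ satisfying $(x-1)\alpha_g + (y-1)\beta_g = 0$ in $\Gamma(I)$. The boundary $\partial([w_g])$ is computed by choosing lifts $\wt\alpha_g, \wt\beta_g \in \Gamma(\coker(d^2))$ along the surjection $j'_*$; then $(x-1)\wt\alpha_g + (y-1)\wt\beta_g$ lies in $\ker(j'_*)$, and modulo $\Gamma((N,2))$ equals $f'(\omega_g)$ for a unique $\omega_g \in (N,2)\otimes_\Z I$. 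The class $[\omega_g] \in ((N,2)\otimes I)_\pi$ is $\partial([w_g])$, and the aim is to arrange $[\omega_g] = N \otimes (g-1)$.

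The construction exploits the relations in $\coker(d^2)$ from \Cref{prop:cokerrep}, notably $(N_x,0,0) = -(1+yx)(0,1,0)$ coming from the first row of $d^2$, together with the identities $(x-1)N_x = 0$ and $(1+y)(y-1) = 0$ in $\Z\pi$. For each $g$, using the chosen lift $\wt{g}$ from \Cref{prop:cokerrep} ($\wt{x} = (1,0,0)$ and $\wt{y} = (-y,-1,0)$), the target boundary value is $f'(N\otimes(g-1)) = [(N_x,0,0)\odot \wt{g}]$. Using the relation above to substitute $(N_x,0,0)$ by $-(1+yx)(0,1,0)$ and reducing modulo the $\pi$-action, one rewrites this as $(x-1)$- and $(y-1)$-multiples of symmetric tensors in $\Gamma(\coker(d^2))$. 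Reading these off as the lifted combinations $(x-1)\wt\alpha_g + (y-1)\wt\beta_g$ and projecting under $j'_*$ gives explicit candidates $(\alpha_g, \beta_g)\in\Gamma(I)^2$, which by construction satisfy the cycle condition.

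The main obstacle is that the most natural cycles built from individually invariant elements of $\Gamma(I)$ (for example $N_x(y-1)^{\otimes 2}$, which is $\pi$-fixed, or $(y-1)^{\otimes 2}$, which is $y$-invariant) admit lifts whose relevant action already vanishes in $\Gamma(\coker(d^2))$ by a telescoping argument, forcing $\partial$ to be trivial on them. The successful cycles must therefore mix the two components non-trivially, with the asymmetry between $x$- and $y$-actions in $\coker(d^2)$ producing the required non-zero element of $(N,2)\otimes I$ upon descent. Once the cycles are found, verification of $\partial([w_g]) = N\otimes(g-1)$ is a direct calculation using \Cref{prop:cokerrep} and the relations of $D_{2n}$.
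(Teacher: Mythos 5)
Your high-level plan matches the paper's: identify $\wh H_0(\pi;(N,2)\otimes_\Z I)\cong \pi^{\textup{ab}}\otimes\Z/2$ via \Cref{lem:cokernel_first}, then produce explicit cycles in $\wh H_1(\pi;\Gamma(I))$ whose images under $\partial$ are the generators $N\otimes(g-1)$. However, there are two substantial problems with your proposal as written.

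First, you never actually construct the cycles; the proposal is a strategy sketch, not a proof. Since the whole content of the lemma lies in exhibiting the cycles and carrying out the (non-trivial) computation in $E = \Gamma(\coker(d^2))/\Gamma((N,2))$, what you have written does not establish surjectivity.

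Second, and more seriously, your diagnosis of the ``main obstacle'' is incorrect and leads you away from the solution. You claim that the natural cycle built from $(y-1)^{\otimes 2}$ (equivalently $(1-y)^{\otimes 2}$, which is $y$-invariant and therefore satisfies the cycle condition $(y-1)\cdot(1-y)^{\otimes 2}=0$) must have $\partial = 0$ because some lift has ``action that already vanishes in $\Gamma(\coker(d^2))$ by a telescoping argument.'' This is precisely the cycle the paper uses, called $\gamma_y=[c_y\otimes(1-y)^{\otimes 2}]$, and a direct computation (with the lift $(y,1,0)^{\otimes 2}$ of $(1-y)^{\otimes 2}$) shows $\partial(\gamma_y)=N\otimes(y-1)\ne 0$. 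The boundary is lift-independent, so you cannot choose a cleverer lift to make it vanish. By ruling out these cycles you eliminate exactly the ones that work, and you are then looking for mysterious ``mixing'' cycles that are not needed. The insight you are missing is the one that makes the whole thing work: both $y$ and $yx$ are involutions, so for $g\in\{y,yx\}$ we have $g(1-g)=-(1-g)$, hence $(1-g)\cdot(1-g)^{\otimes 2}=0$ in $\Gamma(I)$, and $c_g\otimes(1-g)^{\otimes 2}$ is automatically a cycle. By contrast your chosen generator $x$ has order $n>2$, for which this trick fails, so your pair $\{x,y\}$ is the inconvenient choice of generators. Switching to the order-two generators $\{yx,y\}$ (which still generate $\pi^{\textup{ab}}\cong(\Z/2)^2$) immediately hands you the cycles $\gamma_{yx},\gamma_y$, after which the proof reduces to the explicit descent computation in $E$ using the relations $(N_x,1+yx,0)=0$ and $(1+y,1+y,0)=(-N_x,0,1+y)$ in $\coker(d^2)$.
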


\begin{proof}
	For each $g \in \pi$ of order 2, let $c_g \in C_1$ be such that $d_1(c_g)=1-g$.
	Note that the map
	\[ 
		d_1 \otimes \id_{\Gamma(I)} \colon
		\Z \pi^2 \otimes_{\Z \pi} \Gamma(I) \to \Z \pi \otimes_{\Z \pi} \Gamma(I) \cong \Gamma(I)
	\]
	sends $c_g \otimes ((1-g)^{\otimes 2}) \mapsto (1-g) \cdot (1-g)^{\otimes 2} = 0$
	and so we have defined an element
	\[
		\gamma_g = [c_g\otimes (1-g)^{\otimes 2}] \in \wh H_1(\pi;\Gamma(I)).
	\]
	Now note that $\partial(\gamma_g) \in \wh H_0(\pi;(N,2)\otimes_\Z I)$
	is defined by a diagram chase on the following diagram:
	\[
	\begin{tikzcd}[column sep = 1cm]
	0 \ar[r] & C_1 \otimes_{\Z \pi} ((N,2) \otimes_{\Z} I) \ar[d,"d_1 \otimes \id"] \ar[r,"\id \otimes f'"] &
	C_1 \otimes_{\Z \pi} E \ar[d,"d_1 \otimes \id"] \ar[r,"\id \otimes j'_*"] &
	C_1 \otimes_{\Z \pi} \Gamma(I)\ar[d,"d_1 \otimes \id"] \ar[r] & 0\\
	0 \ar[r] & (N,2) \otimes_{\Z} I \ar[r,"f'"] & E \ar[r,"j'_*"] & \Gamma(I) \ar[r] & 0
	\end{tikzcd}
	\]
	where we use the identification $C_0 \otimes_{\Z \pi} M \cong M$ in the bottom exact sequence. 
	
	It will be useful to note that, if $w_g \in \coker(d^2)$ is a lift of $1-g \in I$, then
	\begin{align*}
		& (\id \otimes j'_*)(c_g \otimes [w_g \otimes w_g]) = \gamma_g \text{ and} \\
		& (d_1 \otimes \id)(c_g \otimes [w_g \otimes w_g]) = (1-g) \cdot [w_g \otimes w_g].
	\end{align*}
	
	We will now show that $\partial(\gamma_{yx}) = N \otimes (yx -1)$ and $\partial(\gamma_y) = N \otimes (y-1)$. This finishes the proof since, by Lemma \ref{lem:cokernel_first} and the fact that $\pi$ is generated by $yx$ and  $y$, the elements $N \otimes (yx -1)$ and $N \otimes (y -1)$ are generators for $\wh H_0(\pi; (N,2) \otimes_{\Z} I)$.
	
	We will begin by computing $\partial(\gamma_{yx})$.
	Since $w_{yx} = (0,1,0) \in \coker(d^2)$ is a lift of $1-yx \in I$,
	it suffices to prove that $f'(N \otimes (yx-1)) = (1-yx) \cdot [(0,1,0) \otimes (0,1,0)]$.
	Firstly, since $(0,yx,0) \in \coker(d^2)$ maps to $yx-1 \in I$, we can take
	$f'(N \otimes (yx-1)) = [(N_x,0,0)\odot(0,yx,0)]$.
	Secondly, note that $(N_x,1+yx,0) = 0 \in \coker(d^2)$ and so we have:
	\[ 
		(0,1,0) = -(N_x,0,0) -(0,yx,0) \in \coker(d^2).
	\]
	By using this repeatedly inside $E$, we get:
	\begin{align*}
		 (1-yx) \cdot [(0,1,0)\otimes (0,1,0)] = & [(0,1,0)\otimes (0,1,0)]-[(0,yx,0)\otimes (0,yx,0)] \\
		= & -[(0,1,0)\otimes (N_x,0,0)]+[(N_x,0,0)\otimes(0,yx,0)] \\
		= & [(N_x,0,0)^{\otimes 2}]+[(N_x,0,0)\odot(0,yx,0)] \\
		= & [(N_x,0,0)\odot(0,yx,0)]
	\end{align*}
	where we have used for the last equality
	that $[(N_x,0,0)^{\otimes 2}]=0 \in E$ since $i'(N) = (N_x,0,0)$.

	We will now compute $\partial(\gamma_{y})$.
	Similarly, we can take $w_y = (y,1,0)$ to be a lift of $y-1 \in I$
	so that $f'(N \otimes (y-1)) = [(N_x,0,0) \odot (y,1,0)]$. We now compute:
		\begin{align*}
		&(1-y) \cdot [(y,1,0)\otimes(y,1,0)]\\
		&=[(y,1,0)\otimes(y,1,0)]-[(1,y,0)\otimes(1,y,0)]\\
		&=[(y,1,0)\otimes(y,1,0)]+[(1,y,0)\otimes(y,1,0)]-[(1,y,0)\otimes(1+y,1+y,0)]\\
		&=[(1+y,1+y,0)\otimes(y,1,0)]-[(1,y,0)\otimes(1+y,1+y,0)]\\
		&=[(1+y,1+y,0)\odot (y,1,0)]-[(1+y,1+y,0)^{\otimes 2}] \\
		&=[(1+y,1+y,0)\odot (y,1,0)]
		\end{align*}
	where we have used in the last step
	that $[(1+y,1+y,0)^{\otimes 2}] = 0 \in E$ since $j'(1+y,1+y,0)=0$ and so $(1+y,1+y,0) \in \IM(i')$.
	Now note that
	\[
		(y+1,y+1,0)=(0,y+x,1+y)=(-yN_x,0,1+y)=(-N_x,0,1+y)\in \coker(d^2),
	\]
	where the first uses that $(-1-y,x-1,1+y)$ is trivial, the second uses that $y+x=y(1+yx)$ and that $(N_x,1+yx,0)$ is trivial, and the last line uses that $0=(1-yx)(N_x,1+yx,0)=(N_x-yN_x,0,0)$. In particular, this shows that:
	\[
		(1-y) \cdot [(y,1,0)\otimes(y,1,0)] = f'(N \otimes (y-1)) + [(0,0,1+y) \odot (y,1,0)].
	\]	
	Now note that $(\id \otimes j'_*)(c_y \otimes [(0,0,1) \odot (1,y,0)]) =0$ and 
	\begin{align*}
		(d_1 \otimes \id)(c_y \otimes [(0,0,1) \odot (1,y,0)])
		& = (1-y) \cdot [(0,0,1) \odot (1,y,0)] \\
		& =-[(0,0,1+y) \odot (y,1,0)] +[(0,0,1) \odot (1+y,1+y,0)] \\
		& = -[(0,0,1+y) \odot (y,1,0)]
	\end{align*}
	since $j'(0,0,1)=0$ and $j'(1+y,1+y,0)=0$ implies that $(0,0,1), (1+y,1+y,0) \in \IM(i')$
	and so $[(0,0,1) \odot (1+y,1+y,0)] = 0 \in E$.
	
	Hence, if we take
	$\wt \gamma_y = c_y \otimes [(0,0,1) \odot (1,y,0)] + c_y \otimes [(y,1,0) \otimes (y,1,0)] \in C_1 \otimes E$
	to be our lift of $\gamma_y \in C_1 \otimes \Gamma(I)$,
	then $(d_1 \otimes \id)(\wt \gamma_y) = f'(N \otimes (y-1))$ and so $\partial(\gamma_y) = N \otimes (y-1)$, as required.
\end{proof}

In order to prove Theorem \ref{thm:coker=0},
we will now calculate $\widehat{H}_0(\pi; \Gamma((N,2)))$.
Recall that there is an exact sequence:
\[
	0 \to \Z \xrightarrow[]{N} (N,2) \xrightarrow[]{2 \mapsto 1} \Z \pi/N \to 0.
\]
Let $F = \Gamma((N,2)) / \Gamma(\Z) $ so that 
\begin{equation}
	\label{eqn:seq_F}
	0 \to \underbrace{\Gamma(\Z)}_{\cong \Z} \xrightarrow[]{N_*} \Gamma((N,2)) \xrightarrow[]{q_0} F \to 0
\end{equation}
where $q_0$ is the quotient map. By \cref{lem:bauer} again, we get the exact sequence:
\[
	0 \to \underbrace{\Z \otimes_{\Z} (\Z \pi/N)}_{\cong \Z \pi/N} \xrightarrow[]{f_0} F \to \Gamma(\Z \pi/N) \to 0
\]
where $f_0 \colon \Z \pi/N \to F$ sends $1 \mapsto [2 \odot N]$.

\begin{lemma} \label{lemma:H_0(F)}
	$\widehat{H}_0(\pi ; F)$ is generated by $[2 \odot N]$.	
\end{lemma}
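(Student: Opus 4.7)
The plan is to exploit the short exact sequence
\[
0 \to \Z\pi/N \xrightarrow{f_0} F \to \Gamma(\Z\pi/N) \to 0
\]
produced by Bauer's Lemma (\Cref{lem:bauer}), by chasing it through Tate homology. From the long exact sequence, we obtain
\[
\widehat{H}_0(\pi; \Z\pi/N) \xrightarrow{\widehat{f_0}} \widehat{H}_0(\pi; F) \to \widehat{H}_0(\pi; \Gamma(\Z\pi/N)) \to \cdots,
\]
so once the right-hand term vanishes, $\widehat{f_0}$ is surjective and the image of any generator of $\widehat{H}_0(\pi; \Z\pi/N)$ generates $\widehat{H}_0(\pi; F)$.

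The key input I would cite is the vanishing $\widehat{H}_0(\pi; \Gamma(\Z\pi/N)) = 0$, which was already used at the start of \Cref{sec:ker2}: it follows from \cite[Theorem~2.1]{HK} via the identification $\Z\pi/N \cong I^{*}$ of \Cref{rem:dualI}. With that in hand, it remains to identify a generator of $\widehat{H}_0(\pi; \Z\pi/N)$ and compute its image under $\widehat{f_0}$.

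For the generator, note that $\Z \otimes_{\Z\pi} \Z\pi/N \cong \Z\pi/(N, I\pi) \cong \Z/|\pi|$, which is entirely torsion and cyclic, generated by the class of $1 \in \Z\pi/N$. By \Cref{lemma:torsion-gamma-equals-tate} this gives $\widehat{H}_0(\pi; \Z\pi/N) \cong \Z/|\pi|$, generated by $1$. By the definition of $f_0$ coming from \Cref{lem:bauer} we have $f_0(1) = [2 \odot N]$, so surjectivity of $\widehat{f_0}$ yields that $\widehat{H}_0(\pi; F)$ is generated by $[2 \odot N]$, as claimed.

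This argument is essentially a formal diagram chase once the HK vanishing is invoked; I do not expect a real obstacle here. The substantive work is postponed to the next step, where one must determine the order of $[2 \odot N]$ in $\widehat{H}_0(\pi; F)$ (and hence in $\widehat{H}_0(\pi; \Gamma((N,2)))$ via the sequence \eqref{eqn:seq_F}) in order to see that the image of $\widehat{i'_*}$ in the sequence \eqref{eqn:les_cokernel} lies in the image of $\widehat{f'}$, which is what is really needed to conclude \Cref{thm:coker=0}.
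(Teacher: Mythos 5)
Your proposal is correct and follows essentially the same route as the paper's own proof: compute $\widehat{H}_0(\pi;\Z\pi/N) \cong \Z/|\pi|$ generated by $1$, invoke $\widehat{H}_0(\pi;\Gamma(\Z\pi/N))=0$ from \cite[Theorem~2.1]{HK} to get surjectivity of $\widehat{f_0}$ from the long exact sequence, and observe that $f_0(1)=[2\odot N]$ by \Cref{lem:bauer}.
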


\begin{proof}
	First note that $\Z \otimes_{\Z \pi} \Z \pi/N \cong \Z / | \pi | \cong \Z /2n$ and so
	\[
		\widehat{H}_0(\pi ; \Z \pi/N) = \Tors(\Z \otimes_{\Z \pi} \Z \pi/N) \cong \Z/2n.
	\]
	By \cite[Theorem 2.1]{HK}, we have that $\widehat{H}_0(\pi; \Gamma(\Z \pi/N))=0$ and so the map
	\[ 
		\widehat{f_0} \colon \Z/2n \cong \Tors(\Z \otimes_{\Z \pi} \Z \pi /N) \to \Tors(\Z \otimes_{\Z \pi} F)
	\]
	is surjective. Hence $\widehat{f_0}(1) = 2 \odot N$ is a generator
	of $\widehat{H}_0(\pi ; F)$.
\end{proof}

\begin{lemma}
	\label{lemma:H_0G(N,2)}
	$\widehat{H}_0(\pi; \Gamma((N,2))) = \Tors(\Z \otimes_{\Z \pi} \Gamma((N,2)))$ is generated by 
	\[
		\alpha = \frac{n}{2} \cdot (2 \odot N) - N \otimes N.
	\]
\end{lemma}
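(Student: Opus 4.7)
The plan is to apply the long exact sequence in Tate homology coming from \eqref{eqn:seq_F} together with \Cref{lemma:H_0(F)}. Since $\Z$ has trivial $\pi$-action, one has $\widehat{H}_0(\pi;\Z) = 0$ and $\widehat{H}_{-1}(\pi;\Z) = \Z/|\pi| = \Z/2n$, so the long exact sequence restricts to an injection
\[
	0 \to \widehat{H}_0(\pi;\Gamma((N,2))) \xrightarrow{\widehat{q_0}} \widehat{H}_0(\pi;F) \xrightarrow{\partial} \Z/2n.
\]
This reduces the problem to identifying the kernel of $\partial$ on the cyclic group generated by $[2 \odot N]$.

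The first step is to compute $\partial[2 \odot N]$. Lifting $2 \odot N$ to $\Gamma((N,2))$ and applying the norm, using $g \cdot N = N$ and $g \cdot 2 = 2g$ for all $g \in \pi$, one obtains
\[
	N \cdot (2 \odot N) = 2N \otimes N + N \otimes 2N = 4 \cdot (N \otimes N),
\]
which is the image of $4 \in \Z$ under the inclusion $N_* \colon \Z \hookrightarrow \Gamma((N,2))$, $1 \mapsto N \otimes N$. Thus $\partial[2 \odot N] = 4 \in \Z/2n$, and since $n$ is even, the kernel of $\partial$ inside the cyclic group $\widehat{H}_0(\pi;F) = \langle [2 \odot N] \rangle$ is generated by $(n/2)[2 \odot N]$.

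The second step is to verify that $\alpha$ is a lift of $(n/2)[2 \odot N]$ to $\widehat{H}_0(\pi;\Gamma((N,2)))$. Since $N \otimes N$ lies in the image of $N_*$, we have $q_0(N \otimes N) = 0$ in $F$, and hence $\widehat{q_0}(\alpha) = (n/2)[2 \odot N]$. It remains to check that $\alpha$ actually lies in $\Tors(\Z \otimes_{\Z\pi} \Gamma((N,2)))$. For this, use the general identity $|\pi|(1 \otimes w) = 1 \otimes (N \cdot w)$ in $\Z \otimes_{\Z\pi} \Gamma((N,2))$ applied to $w = 2 \odot N$: combined with the norm calculation above, this gives $2n \cdot [2 \odot N] = 4 \cdot [N \otimes N]$ in $\Z \otimes_{\Z\pi} \Gamma((N,2))$, and therefore $4 \alpha = 0$.

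Since $\widehat{q_0}$ is injective and sends $\alpha$ to a generator of $\ker(\partial)$, it follows that $\alpha$ generates $\widehat{H}_0(\pi;\Gamma((N,2)))$. The heart of the argument is the two short norm computations — one describing $\partial$ on the generator of $\widehat{H}_0(\pi;F)$, and one showing $\alpha$ is $4$-torsion — so the main potential pitfall is simply keeping careful track of the non-trivial $\pi$-action on $2 \in (N,2)$ (since $g \cdot 2 = 2g \neq 2$).
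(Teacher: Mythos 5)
Your proposal is correct and follows essentially the same approach as the paper: the same long exact sequence from \eqref{eqn:seq_F}, identification of $\widehat{H}_{-1}(\pi;\Gamma(\Z))$ with $\Z/2n$, computation of $\partial[2\odot N]=4$, and verification that $\alpha$ is $4$-torsion lifting $\tfrac{n}{2}[2\odot N]$. You spell out the norm computation $N\cdot(2\odot N)=4(N\otimes N)$ behind the boundary map, which the paper leaves as a ``straightforward diagram chase,'' but otherwise the argument is identical.
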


\begin{proof}
	Since $\Z \otimes_{\Z \pi} \Gamma(\Z) \cong \Z$ is torsion-free,
	the long exact sequence on Tate homology coming
	from the exact sequence \eqref{eqn:seq_F} is:
	\[
		0 \to
		\widehat{H}_0(\pi ; \Gamma((N,2))) \xrightarrow[]{\widehat{q_0}}
		\widehat{H}_0(\pi ; F) \xrightarrow[]{\partial}
		\widehat{H}_{-1}(\pi;\Gamma(\Z)) \to \ldots.
	\]
	By dimension shifting, we have that
	\[ \widehat{H}_{-1}(\pi;\Gamma(\Z)) \cong \widehat{H}_{-1}(\pi;\Z) \cong \widehat{H}_0(\pi ; \Z \pi/N) \cong \Z/2n\]
	and, with respect to the identification
	$\widehat{H}_{-1}(\pi;\Gamma(\Z)) = \Gamma(\Z)^{\pi}/\IM(N)$,
	it is generated by $1 \otimes 1$.
	It follows from a straightforward diagram chase that
	$\partial([2 \odot N]) = 4 \cdot (1 \otimes 1)$.
	Since $[2 \odot N] \in \widehat{H}_0(\pi;F)$ is a generator
	by \Cref{lemma:H_0(F)}, this implies that
	$\ker(\partial) = \langle \frac{n}{2} \cdot [2 \odot N] \rangle$.
	
	Let $\alpha = \frac{n}{2} \cdot (2 \odot N) - N \otimes N \in \Z \otimes_{\Z \pi} \Gamma((N,2))$. Then
	\[ 
		4 \alpha = 2 (N \odot N) - 4(N \otimes N) = 0
	\]
	and so $\alpha \in \Tors(\Z \otimes_{\Z \pi} \Gamma((N,2)))$.
	Since $\widehat{q_0}(\alpha) = \frac{n}{2} \cdot [2 \odot N]$,
	this implies that $\alpha$ generates $\widehat{H}_0(\pi; \Gamma((N,2)))$.
\end{proof}

\begin{remark}
	We were not able to detect whether the generator $\alpha$ is
	non-zero in $\widehat{H}_0(\pi; \Gamma((N,2)))$, and hence we do
	not know whether the module $\widehat{H}_0(\pi; \Gamma((N,2)))$ is trivial.
	Nevertheless, we can finish the proof of \Cref{thm:coker=0} by showing that
	the generator $\alpha$ maps to zero under the map
	$\widehat{i'_*} \colon \widehat{H}_0(\pi; \Gamma((N,2))) \to \widehat{H}_0(\pi; \Gamma(\coker(d^2)))$
	in the long exact sequence \eqref{eqn:les_cokernel}.
\end{remark}

\begin{lemma} 
	\label{lemma:i_+=0}
	$\widehat{i'_*} =0$, i.e.\
	$\widehat{i'_*}(\alpha) = 0 \in \widehat{H}_0(\pi; \Gamma(\coker(d^2)))$.
\end{lemma}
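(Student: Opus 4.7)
The plan is to perform a direct computation in $\Z\otimes_{\Z\pi}\Gamma(\coker(d^2))$, using the explicit formulas $i'(2)=(0,0,1)$ and $i'(N)=(N_x,0,0)$ from \Cref{prop:cokerrep}. Applying $i'_*$ to the generator $\alpha=\tfrac{n}{2}(2\odot N)-N\otimes N$ of \Cref{lemma:H_0G(N,2)} gives
\[
\widehat{i'_*}(\alpha)=\tfrac{n}{2}\bigl((0,0,1)\odot(N_x,0,0)\bigr)-(N_x,0,0)^{\otimes 2}
\]
and the goal is to exhibit this element explicitly as a sum of terms of the form $(g-1)\cdot v$ with $g\in\pi$ and $v\in\Gamma(\coker(d^2))$.

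First, I would exploit the relation $(N_x,1+yx,0)=0$ in $\coker(d^2)$ (coming from the first row of $d^2$) to substitute $(N_x,0,0)=-(1+yx)(0,1,0)$, rewriting everything in terms of $e_2:=(0,1,0)$, $e_3:=(0,0,1)$, and the order-two element $yx$. Expanding $(1+yx)e_2=e_2+yxe_2$ and using the diagonal-action identity $(yxe_2)^{\otimes 2}=yx\cdot e_2^{\otimes 2}$ (which becomes $e_2^{\otimes 2}$ after tensoring down), the square term $(N_x,0,0)^{\otimes 2}$ reduces modulo the $\pi$-action to $2\,e_2^{\otimes 2}+e_2\odot yxe_2$, while the cross-term reduces to $\tfrac{n}{2}(e_3\odot e_2+e_3\odot yxe_2)$.

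Second, which will be the crucial step, I would use the second defining relation $(1+y)e_3=(1+y)e_1+(1-x)e_2$ of $\coker(d^2)$ to re-express the $e_3$-terms, followed by further applications of the first relation to eliminate any resulting $e_1$-terms. The hypothesis that $n$ is even enters twice: it makes $n/2$ an integer, and it lets us pair the various factors of $2$ coming from the symmetrisation of $\odot$-products against the coefficient $\tfrac{n}{2}$.

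The main obstacle will be the second step, namely finding the precise rearrangement that witnesses $i'_*(\alpha)$ as an element of the augmentation-ideal action on $\Gamma(\coker(d^2))$. A more conceptual alternative would invoke the long exact sequence associated to $0\to\Gamma((N,2))\to\Gamma(\coker(d^2))\to E\to 0$: by \Cref{prop:H_0(E)} the map $\widehat{i'_*}$ is surjective, and the statement then reduces to showing that $\alpha$ lies in the image of the boundary $\widehat H_1(\pi;E)\to\widehat H_0(\pi;\Gamma((N,2)))$. Such a preimage could be constructed by a diagram chase in the resolution $C_*(\mathcal{P})$, analogous to the chase carried out in the proof of \Cref{lemma:cokernel_first_half}.
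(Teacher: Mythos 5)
Your setup is on the right track and matches the paper's framework: both compute
$\widehat{i'_*}(\alpha)=\tfrac{n}{2}\bigl((0,0,1)\odot(N_x,0,0)\bigr)-(N_x,0,0)^{\otimes 2}$
and aim to express it as a combination of terms $(g-1)\cdot v$ in $\Gamma(\coker(d^2))$. Your observation that $(N_x,0,0)=-(1+yx)(0,1,0)$ from the first row of $d^2$, and the resulting expansion of $(N_x,0,0)^{\otimes 2}$, is precisely what the paper uses (the element $c_1=(1-yx)(0,1,0)^{\otimes 2}$ expands to exactly $(N_x,0,0)^{\otimes 2}+(N_x,0,0)\odot(0,yx,0)$). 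The "conceptual alternative" you mention is also logically correct, but be aware that it is only a reformulation: finding an explicit preimage in $\widehat H_1(\pi;E)$ under the boundary map requires the same calculation you would otherwise do directly, so it buys no shortcut.

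However, you have a genuine gap, and you honestly flag it yourself: the "precise rearrangement" in your second step is the whole content of the lemma. The hard part is not recognizing that you should kill the $e_3\odot e_2$ and $e_3\odot yxe_2$ terms with the second relation of $\coker(d^2)$ — it is discovering the specific elements that make the bookkeeping close up. In the paper these are: (a) two elements lifted directly from the chase in \Cref{lemma:cokernel_first_half}, namely $(1-yx)(0,1,0)^{\otimes 2}$ and $(1-y)\bigl((0,0,1)\odot(1,y,0)+(y,1,0)^{\otimes 2}\bigr)$, combined as $yx\cdot c_1+c_2$ to produce $v_1=(-N_x,0,0)\odot(y,0,0)+(N_x,0,0)\odot(0,0,y)$; and (b) the crucial new ingredient $v_2=(1-x^{n/2})(S,0,0)^{\otimes 2}$ with $S=\sum_{i=0}^{n/2-1}x^i$, which telescopes via $(1+x^{n/2})S=N_x$. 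The final identity $\wh\alpha=Sy\cdot v_1+v_2$ produces the coefficient $S$ in place of $n/2$ because the $S$-many cross-terms all collapse using $(N_x,0,0)=(yN_x,0,0)$. Neither the reuse of \Cref{lemma:cokernel_first_half}'s witnesses nor the $(1-x^{n/2})$-telescoping trick appears in your plan, and without them the "general linear algebra" you describe will not terminate: simply substituting the relations repeatedly produces an ever-growing expression, with no obvious reason the coefficient $n/2$ matches anything in the image of the augmentation ideal. So the proposal identifies the correct framework but stops short of the argument's actual content.
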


\begin{proof}
	By Lemma \ref{lemma:H_0G(N,2)}, we know that $\widehat{H}_0(\pi; \Gamma((N,2)))$ is generated by $\alpha$
	and so $\IM(\widehat{i'_*})$ is generated by
	\[ 
		\wh \alpha = \widehat{i'_*}(\alpha) 
		= \frac{n}{2} \cdot ((0,0,1) \odot (N_x,0,0)) - (N_x,0,0)^{\otimes 2}.
	\]
	We will now show that $\wh \alpha$ is trivial. Consider the following elements in $\Gamma(\coker(d^2))$:
	\begin{align*}
		c_1 &= (N_x,0,0)^{\otimes 2} + (N_x,0,0)\odot(0,yx,0) \\
		    &= (1-yx) \cdot (0,1,0)^{\otimes 2}
	\end{align*}
	and
	\begin{align*}
	c_2 &= (-N_x,0,0)\odot (y,1,0) -(-N_x,0,1+y)^{\otimes 2} +(0,0,1)\odot(-N_x,0,1+y) \\ 
	&= (1-y) \cdot ((0,0,1) \odot (1,y,0) + (y,1,0)^{\otimes 2})
	\end{align*}
	where the second equalities follow from the calculations in \Cref{lemma:cokernel_first_half}.
	Hence we have that the classes represented by $c_1, c_2 \in \wh H_0(\pi;\Gamma(\coker(d^2)))$
	are trivial.
	
	Since $(N_x,0,0) = (yN_x,0,0) \in \coker(d^2)$, we have in $\coker(d^2)$:
	\begin{align*}
	yx \cdot c_1 + c_2
	&=(-N_x,0,0) \odot (y,0,0) +(-N_x,0,1+y) \odot (0,0,1) +(N_x,0,0)^{\otimes 2} -(-N_x,0,1+y)^{\otimes 2} \\
	&= (-N_x,0,0) \odot (y,0,0) + (N_x,0,0)\odot(0,0,y) + (0,0,1+y) \odot (0,0,1) - (0,0,1+y)^{\otimes 2}\\
	&= (-N_x,0,0)\odot(y,0,0)
		+(N_x,0,0)\odot(0,0,y)-(0,0,y)^{\otimes 2} +(0,0,1)^{\otimes 2}
	\end{align*}
	
	Let $v_1 = (-N_x,0,0)\odot(y,0,0)+(N_x,0,0)\odot(0,0,y)$.
	Since
	\[
		c_3 = -(0,0,y)^{\otimes 2} + (0,0,1)^{\otimes 2} = 0 \in \wh H_0(\pi;\Gamma(\coker(d^2))),
	\]
	the above implies that $v_1 = yx \cdot c_1 + c_2 - c_3 = 0 \in \wh H_0(\pi;\Gamma(\coker(d^2)))$.
	
	Let $S:=\sum_{i=0}^{n/2-1}x^i$ 
	and let $v_2 = (1-x^{n/2}) \cdot (S,0,0)^{\otimes 2}$
	so that $v_2 = 0 \in \wh H_0(\pi;\Gamma(\coker(d^2)))$.
	Now, we have:
	\begin{align*}
	v_2
	&=(S,0,0)^{\otimes 2}-(N_x-S,0,0)^{\otimes 2}\\
	&=(N_x,0,0)\odot (S,0,0)-(N_x,0,0)^{\otimes 2}.
	\end{align*}
	Using $(N_x,0,0) = (yN_x,0,0) \in \coker(d^2)$ again,
	\begin{align*}
		Sy \cdot v_1 + v_2 
		&= (-yN_x,0,0)\odot(S,0,0)+(yN_x,0,0)\odot(0,0,S) + v_2 \\
		&= (-N_x,0,0)\odot(S,0,0)+(N_x,0,0)\odot(0,0,S) + v_2 \\
		&= (N_x,0,0)\odot(0,0,S)-(N_x,0,0)^{\otimes 2} \\
		&= S (N_x,0,0)\odot(0,0,1)-(N_x,0,0)^{\otimes 2} \\
		&= S (0,0,1) \odot (N_x,0,0) - (N_x,0,0)^{\otimes 2}.
	\end{align*}
	
	Finally, note that $\wh \alpha = Sy \cdot v_1 + v_2 = 0 \in \wh H_0(\pi;\Gamma(\coker(d^2)))$, as required.
\end{proof}

\begin{proof}[Proof of \Cref{thm:coker=0}]
	\label{proof:coker=0}
	We showed previously that there was an exact sequence:
	\[ 
		\widehat{H}_0(\pi; \Gamma((N,2))) \xrightarrow[]{\widehat{i'_*}}
		\widehat{H}_0(\pi; \Gamma(\coker(d^2))) \xrightarrow[]{\widehat{q'}}
		\widehat{H}_0(\pi;E).
	\]
	By \Cref{prop:H_0(E)}, we have that $\wh H_0(\pi;E)=0$ and,
	by \Cref{lemma:i_+=0}, we have that $\widehat{i'_*}=0$.
	By exactness, this implies that $\widehat{H}_0(\pi; \Gamma(\coker(d^2)))=0$.
\end{proof}

\bibliographystyle{alpha}
\bibliography{dihedral_gamma}

\end{document}